\documentclass[reqno]{amsart}

\usepackage[a4paper,margin=1.5in]{geometry}

\usepackage{mathrsfs}
\usepackage[absolute, overlay]{textpos}

\usepackage{amsfonts,  amsmath}
\usepackage{graphicx}
\usepackage{enumerate,  enumitem,  amscd}
\usepackage{amsthm,  amssymb,  epsfig,  hyperref,  amsmath}  
\usepackage{mathtools}
\usepackage[table,xcdraw]{xcolor}
\usepackage{blkarray}

\usepackage{tikz}
\usepackage{tikz-cd}
\usetikzlibrary{arrows,automata}
\usepackage{quiver}
\usepackage{spectralsequences}
\usepackage{algorithm}
\usepackage{algpseudocode}

\newcommand{\pres}[3]{\textnormal{#1} \langle #2 \mid #3 \rangle}

\newcommand{\kk}{\mathbf{k}}
\newcommand{\C}{\mathbb{C}}

\newcommand{\Z}{\mathbb{Z}}
\newcommand{\ZM}{\mathbb{Z}M}
\newcommand{\Q}{\mathbb{Q}}
\newcommand{\ab}{\mathrm{ab}}
\newcommand{\cR}{\mathcal{R}}
\newcommand{\fg}{\mathfrak{g}}
\newcommand{\orb}{\mathrm{orb}}
\newcommand{\cE}{\mathcal{E}}

\newcommand{\xra}[1]{\xrightarrow{}^{\ast}_{#1}}
\newcommand{\lra}[1]{\xleftrightarrow{}^{\ast}_{#1}}
\newcommand{\xr}[1]{\xrightarrow{}^{}_{#1}}

\DeclareMathOperator{\Fib}{Fib}
\DeclareMathOperator{\Irr}{Irr}

\DeclareMathOperator{\PSL}{PSL}
\DeclareMathOperator{\SL}{SL}
\DeclareMathOperator{\GL}{GL}

\DeclareMathOperator{\FP}{FP}

\DeclareMathOperator{\im}{im}

\DeclareMathOperator{\Hom}{Hom}
\DeclareMathOperator{\Ext}{Ext}
\DeclareMathOperator{\Tor}{Tor}
\DeclareMathOperator{\Isom}{Isom}

\newtheorem{theorem}{Theorem} 
\newtheorem*{theorem*}{Theorem} 
 
\numberwithin{theorem}{section}
\newtheorem{lemma}[theorem]{Lemma}     
\newtheorem{corollary}[theorem]{Corollary}
\newtheorem*{corollary*}{Corollary}
\newtheorem{proposition}[theorem]{Proposition}
\newtheorem*{proposition*}{Proposition}
\numberwithin{equation}{section}

\theoremstyle{definition}

\newtheorem*{question*}{Question}
\newtheorem{remark}[theorem]{Remark}
\newtheorem{remark*}[theorem]{Remark}
\newtheorem{example}{Example}
\numberwithin{example}{section}

\tikzcdset{scale cd/.style={every label/.append style={scale=#1},
    cells={nodes={scale=#1}}}}

\begin{document}

\title[Profinite rigidity and homology of $3$-orbifold groups]{On profinite rigidity, Grothendieck pairs, and the second homology of some $3$-orbifold groups}

\author{Carl-Fredrik Nyberg-Brodda}
\address{June E Huh Center for Mathematical Challenges, Korea Institute for Advanced Study (KIAS), Seoul 02455, Korea}
\email{cfnb@kias.re.kr}

\thanks{\newline The author is currently supported by KIAS Individual Grant HP094701 at Korea Institute for Advanced Study, and by the Mid-Career Researcher Program (RS-2023-00278510) through the National Research Foundation funded by the government of Korea.}

\date{\today}

\keywords{Profinite rigidity, hyperbolic $3$-manifolds, second homology group, Grothendieck pairs, complete rewriting systems, Weeks manifold.}
\subjclass[2020]{20J05, 57K32 (primary); 20E18, 20F05 (secondary)}

\begin{abstract} The second homology group is of central importance in the study of profinite rigidity of $3$-manifold groups. Although general and deep results imply that the integral homology of cocompact hyperbolic $3$-orbifold groups is computable in principle, the resulting algorithm is not practical. We develop an effective method for computing $H_2$ in the case of orbifold groups arising as finite extensions of the fundamental group of hyperbolic rational homology $3$-spheres. As a special case, this yields explicit computations of the second homology groups of all cocompact lattices between $\pi_1(\mathcal W)$ and its normalizer in $\PSL_2(\mathbb C)$, where $\mathcal W$ is the Weeks manifold. We also show that these lattices are absolutely profinitely rigid, completing work by Bridson, McReynolds, Reid \& Spitler in this setting. As a special case, we determine that $H_2(\Gamma_{\mathcal O}, \mathbb Z) \cong \Z / 2\Z$, where $\Gamma_{\mathcal O}$ is the normalizer of the group of units $\Gamma_{\mathcal{O}}^1$ in a choice of maximal order $\mathcal O$ of the quaternion algebra associated to $\mathcal W$, thereby answering a question of Bridson \& Reid. Although this non-vanishing obstructs one possible construction of Grothendieck pairs in $\Gamma_{\mathcal O}^1 \times \Gamma_{\mathcal O}^1$, we use our computations to show the vanishing of the second homology of another lattice whose derived subgroup is $\Gamma_{\mathcal{O}}^1$, which then yields Grothendieck pairs in this direct product by a theorem of Bridson \& Reid. Finally, to showcase the generality of the techniques, we also compute the second homology of some finite extensions by orientable isometries of the fundamental group of some Fibonacci manifolds $M_n$. 
\end{abstract}

\maketitle 

\noindent The history of group theory is permeated with questions concerning how much can be deduced about an infinite group by considering only finitary data. While the mid-1950s flood of general undecidability for finitely presented groups swept away any hope of a uniform approach for understanding this wild class of groups, sifting through the flotsam and jetsam left in its wake uncovered many new specialised approaches. In particular, the natural question arises of when, and how much of, a group is determined by its finite quotients. This question is most naturally made formal in the question of \textit{profinite rigidity}: given two finitely generated residually finite groups $G$ and $H$ with the same finite quotients, is it necessarily the case that $G \cong H$? The present article is concerned with several interrelated questions, all ultimately tracing back to this central problem. 

Profinite rigidity has been at the focal point of much recent research in combinatorial and geometric group theory. Some positive results are easy to come by; indeed, some groups are obviously profinitely rigid. For example, any finitely generated residually finite group $G$ with the same finite quotients as a finitely generated abelian group $A$ is easily seen to be abelian, and hence by considering rank and torsion, we necessarily find $G \cong A$. Thus all finitely generated abelian groups are profinitely rigid. However, beyond this class, general results are scarce and often difficult to prove. For example, Baumslag \cite{Baumslag1974} constructed pairs of non-isomorphic nilpotent (even virtually cyclic) groups with the same finite quotients. Pickel \cite{Pickel1971, Pickel1973}, relying on deep results by Borel \& Serre \cite{Borel1964}, showed that for any finitely generated virtually nilpotent group $G$, there are at most finitely many other isomorphism classes of finitely generated residually finite groups with the same finite quotients as $G$ (i.e.\ the \textit{genus} of $G$ is finite). However, beyond this class the situation quickly turns wild: for metabelian groups, the genus can be infinite \cite{Pickel1974}, and for solvable groups the genus can even be of cardinality $2^{\aleph_0}$, as recently proved by Nikolov \& Segal \cite{Nikolov2021}. Remeslennikov (see \cite[Question~12]{Noskov1979}) famously asked whether finitely generated free groups are profinitely rigid; this remains a tantalizing open problem, as does the corresponding problem for surface groups. 

Thus the landscape of understanding which general finitely presented groups are profinitely rigid appears at present as rather difficult to navigate, to say the least. On the other hand, in the cases that the groups are of geometric origin, more tools have recently become available. One such class that has proved particularly fruitful is that of cocompact lattices in $\PSL_2(\C)$, i.e.\ fundamental groups of closed hyperbolic $3$-orbifolds. The first profinite rigidity results for full-sized lattices (i.e.\ those containing non-abelian free subgroups) were obtained by Bridson, McReynolds, Reid \& Spitler \cite{BMRS}, who proved profinite rigidity for, among other groups, the Bianchi group $\PSL_2(\Z[\omega])$ with $\omega^2 + \omega + 1 = 0$, and the fundamental group $\Gamma_\mathcal{W}$ of the Weeks manifold $\mathcal{W}$, the closed hyperbolic $3$-manifold of minimal volume. The same authors subsequently applied the same strategy to other arithmetic groups in \cite{BMRSTriangle}, proving profinite rigidity also for many triangle groups (which are lattices in $\PSL_2(\mathbb{R})$), cf.\ also \cite{Conder2021}.

Any hyperbolic $3$-manifold group $G = \pi_1(M)$ has, as a consequence of Mostow--Prasad Rigidity \cite{Mostow1968, Prasad1973}, a finite $\Isom(M)$, which is isomorphic to $\operatorname{Out}(\pi_1(M))$. Thus, every conjugacy class of subgroups in $\Isom(M)$ gives rise to a finite extension of $G$ that is also a lattice in $\Isom(\mathbb{H}^3)$. Hence, the question of the relation between the profinite rigidity of these lattices and that of $G$ naturally arises. As a guiding example, consider the Weeks manifold $\mathcal{W} = \mathbb{H}^3 / \Gamma_{\mathcal{W}}$, the unique closed hyperbolic $3$-manifold of minimal volume. Let $k$ be the unique cubic field of discriminant $-23$, and let $B_{\mathcal{W}}$ be the quaternion algebra over $k$ and ramified at the real place and at the prime of norm $5$ in $k$. Letting $\mathcal{O}$ denote any maximal order in $B_{\mathcal{W}}$, the lattice $\Gamma_{\mathcal{W}}$ arises as the unique subgroup of index $12$ in the normalizer $\Gamma_{\mathcal{O}} < \PSL_2(\mathbb{C})$ of the group of units $\Gamma_{\mathcal{O}}^1$ of $\mathcal{O}$. Then $\Gamma_{\mathcal{W}} <_3 \Gamma_{\mathcal{O}}^1 <_4 \Gamma_{\mathcal{O}}$, and $\Gamma_{\mathcal{O}} / \Gamma_{\mathcal{W}} = \Isom(\mathcal{W}) \cong D_6$, the dihedral group of order $12$. 

There are, in total, ten lattices between $\Gamma_{\mathcal{W}}$ and $\Gamma_{\mathcal{O}}$ (inclusive) in $\PSL_2(\C)$, each corresponding to a conjugacy class of subgroups in $D_6$. Bridson, McReynolds, Reid \& Spitler \cite{BMRS} established that $\Gamma_{\mathcal{O}}^1$ and $\Gamma_{\mathcal{O}}$ are profinitely rigid, along with two other lattices (the orbifold groups of $9_{49}(2)$ and $7_1^2(2,3)$ in the notation of \cite{Mednykh1998}, see Figure~\ref{Fig:Weeks-lattice}). Our first result is to extend their analysis to all intermediate lattices. 

\begin{theorem*}[See Theorem~\ref{Thm:Weeks-are-profinitely-rigid}]
Let $\Gamma$ be a lattice $\Gamma_{\mathcal{W}} \leq \Gamma \leq \Gamma_{\mathcal{O}}$, i.e.\ a finite extension of the Weeks manifold group $\Gamma_{\mathcal{W}}$ by some group of isometries. Then $\Gamma$ is (absolutely) profinitely rigid.
\end{theorem*}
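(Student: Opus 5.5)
The plan is to reproduce the Galois-rigidity strategy of Bridson, McReynolds, Reid \& Spitler \cite{BMRS}. Fix a lattice $\Gamma$ with $\Gamma_{\mathcal W} \le \Gamma \le \Gamma_{\mathcal O}$, and let $\Lambda$ be a finitely generated residually finite group with $\widehat{\Lambda} \cong \widehat{\Gamma}$. The goal is to show $\Lambda \cong \Gamma$.

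\textbf{Step 1: recognise $\Lambda$ as a lattice.} Since $\Gamma_{\mathcal W}$ has finite index in $\Gamma$, the closure $\overline{\Gamma_{\mathcal W}}$ is an open subgroup of $\widehat\Gamma \cong \widehat\Lambda$. It therefore determines a finite-index subgroup $\Lambda_0 \le \Lambda$ with $\widehat{\Lambda_0} \cong \widehat{\Gamma_{\mathcal W}}$, and of the same index. The profinite rigidity of $\Gamma_{\mathcal W}$ from \cite{BMRS} then gives $\Lambda_0 \cong \Gamma_{\mathcal W}$. So $\Lambda$ is virtually a closed hyperbolic $3$-manifold group. Because $\widehat\Gamma$ has no nontrivial finite normal subgroups (this follows from the same statement for $\widehat{\Gamma_{\mathcal W}}$, a good group), $\Lambda$ has no finite normal subgroup. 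Geometrisation then shows that $\Lambda$ is a cocompact lattice in $\PSL_2(\C)$ commensurable with $\Gamma_{\mathcal W}$. By Mostow rigidity, $\Lambda$ embeds in the commensurator. Replacing $\Lambda_0$ by the normal core, which is still matched with a characteristic-type subgroup, and using that $\Lambda_0 = \Gamma_{\mathcal W}$ is normal in the right sense, we get that $\Lambda$ normalises a conjugate of $\Gamma_{\mathcal W}$. Hence $\Lambda$ lies between $\Gamma_{\mathcal W}$ and $N(\Gamma_{\mathcal W}) = \Gamma_{\mathcal O}$. In this way $\Lambda$ is one of the ten lattices listed in Figure~\ref{Fig:Weeks-lattice}.

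A cleaner route to the same conclusion is to invoke the Galois rigidity and arithmeticity criterion of \cite{BMRS}. Profinite isomorphism preserves the property of being arithmetic with the same invariant trace field and quaternion algebra. Combined with the identification of the maximal-order normaliser, this places $\Lambda$ inside a conjugate of $\Gamma_{\mathcal O}$ containing a conjugate of $\Gamma_{\mathcal W}$.

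\textbf{Step 2: distinguish the ten lattices by profinite invariants.} Each intermediate lattice corresponds to a conjugacy class of subgroups $H \le D_6$. The correspondence $H \mapsto \Gamma_H$ is visible in $\widehat{\Gamma_{\mathcal O}}/\overline{\Gamma_{\mathcal W}} \cong D_6$, but we must use only invariants of $\widehat{\Gamma_H}$ itself. I would compute, for each of the ten groups, the following invariants:
\begin{itemize}
\item the index (covolume) relative to $\Gamma_{\mathcal W}$, which is detectable since $\Gamma_{\mathcal W}$ corresponds to a canonical open subgroup (e.g.\ torsion-free of minimal index containing all others of that type);
\item the abelianisation $H_1$;
\item the conjugacy classes of finite subgroups, since torsion in $\widehat{\Gamma}$ matches torsion in $\Gamma$ for good groups;
\item the abelianisations of low-index subgroups.
\end{itemize}
I would then check that these invariants separate the ten lattices, up to the identification $\Gamma_H \cong \Gamma_{H'}$ whenever $H, H'$ are conjugate.

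\textbf{Main obstacle.} The main obstacle is Step 1's passage from ``$\Lambda$ contains $\Gamma_{\mathcal W}$ with finite index'' to ``$\Lambda$ is a Kleinian group in the right commensurability class sitting inside $\Gamma_{\mathcal O}$''. This requires that the index-$[\Gamma:\Gamma_{\mathcal W}]$ subgroup is canonically determined, so that it is normal in $\Lambda$. I would first try to prove that $\Gamma_{\mathcal W}$ is characteristic in each $\Gamma$, for instance as the unique torsion-free normal subgroup of that index with the given quotient. This can be checked by low-index subgroup enumeration and then transferred to $\widehat\Gamma$. If that fails for some $\Gamma$, I would fall back on the lattice-theoretic argument of \cite{BMRS} via the invariant quaternion algebra $B_{\mathcal W}$ and maximality of $\Gamma_{\mathcal O}$.
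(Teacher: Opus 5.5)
Your Step~1 is, in outline, what the paper imports as Theorem~\ref{Thm:genus-is-same-level} (Bridson--Reid): any $\Lambda$ in the genus of $\Gamma$ is a lattice containing a copy of $\Gamma_{\mathcal W}$ with index $[\Gamma:\Gamma_{\mathcal W}]$. The ``main obstacle'' you flag is not an obstacle. Since $\Gamma_{\mathcal W}\trianglelefteq\Gamma$, the closure $\overline{\Gamma_{\mathcal W}}$ is a normal open subgroup of $\widehat\Gamma$. Its preimage $\Lambda_0$ under the fixed isomorphism $\widehat\Lambda\cong\widehat\Gamma$ is then automatically normal in $\Lambda$, with $\widehat{\Lambda_0}\cong\widehat{\Gamma_{\mathcal W}}$, so no characteristic-subgroup or normal-core argument is needed. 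Your claim that $\widehat\Gamma$ has no finite normal subgroups does not follow from torsion-freeness of $\widehat{\Gamma_{\mathcal W}}$ alone. Such a subgroup meets $\overline{\Gamma_{\mathcal W}}$ trivially and hence centralises it, so you also need the outer action of $\Gamma/\Gamma_{\mathcal W}$ to be faithful. That holds here because $D_6$ acts faithfully on $H_1(\Gamma_{\mathcal W},\Z)\cong\Z_5^2$ by \eqref{Eq:action-on-weeks}. With that in place, Mostow rigidity embeds $\Lambda$ in $\operatorname{Aut}(\Gamma_{\mathcal W})\cong\Gamma_{\mathcal O}$ with the right index. No geometrisation or arithmetic-invariant argument is required.

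The genuine gap is Step~2, which carries the entire content of the paper's proof and which you leave as ``I would then check''. After Step~1, only lattices of equal index need separating, and indices $1,3,4,12$ are immediate. At index~$2$, however, $\pi_1^{\orb}(L_{5,-2}(2))$ and $\pi_1^{\orb}(L_{5,-1}(2))$ both have abelianisation $\Z_2\oplus\Z_5$. At index~$6$, $\Gamma_1^\Theta$ and $\Gamma_2^\Theta$ both have abelianisation $\Z_2$. Nothing read off from the action on $H_1(\Gamma_{\mathcal W},\Z)$ separates these pairs either. $S$ and $RS$ both act on $\Z_5^2$ as trivial$\,\oplus\,$sign, and $\langle S,T\rangle$ and $\langle RS,ST\rangle$ both act through the $2$-dimensional irreducible representation of $S_3$ over $\mathbb F_5$. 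You give no reason why finite-subgroup data or unspecified low-index abelianisations would differ.

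The paper settles these two pairs with explicit finite quotients. It writes down a surjection $\pi_1^{\orb}(L_{5,-2}(2))\to\PSL_2(7)$, checks that $\pi_1^{\orb}(L_{5,-1}(2))$ admits none, and reports that $\PSL_2(7)$ is the smallest separating quotient. For $\Gamma_1^\Theta$ and $\Gamma_2^\Theta$, it shows that the sets of primes $p<100$ for which each surjects onto $\PSL_2(p)$ differ; for instance $p=7$ occurs for one and not the other. Without a computation of this kind, your argument only places $\Lambda$ among the lattices of the correct index and leaves exactly these two pairs undecided.
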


A closely related line of inquiry comes from Grothendieck pairs. Let $\phi \colon \Lambda \to \Gamma$ be an embedding between two finitely generated, residually finite groups with $\Lambda \neq \Gamma$. Then $(\Gamma, \Lambda)$ is a \textit{Grothendieck pair} if $\phi$ lifts to an isomorphism between the profinite completions of $\Lambda$ and $\Gamma$ (see \S\ref{Subsec:profinite-rigidity}). Grothendieck \cite{Grothendieck1970} asked about the existence of such pairs where both $\Lambda$ and $\Gamma$ are finitely presented; this was answered positively by Bridson \& Grunewald \cite{Bridson2004}, cf.\ also \cite{Platonov1986} for the finitely generated case.

Bridson \& Reid \cite[Theorem~A]{BridsonReidC} recently showed that the group $\Gamma_{\mathcal{W}} \times \Gamma_{\mathcal{W}}$ is not profinitely rigid, and that furthermore all of the failures of this rigidity appear as Grothendieck pairs: if a finitely generated residually finite group $\Lambda$ has the same set of finite quotients as $\Gamma_{\mathcal{W}} \times \Gamma_{\mathcal{W}}$, then there is a Grothendieck pair $(\Gamma_{\mathcal{W}} \times \Gamma_{\mathcal{W}}, \Lambda)$. Their technique for constructing such Grothendieck pairs comes from their ``Theorem~C'' (see Theorem~\ref{ThmC-BridsonReid}). This theorem guarantees for certain groups $G$ the existence of uncountably many pairwise non-isomorphic finitely generated groups $P_\lambda$ and Grothendieck pairs $(G \times G, P_\lambda)$. The difficult part of the condition for $G$ is, roughly speaking, to exhibit a group $\Gamma$ with $H_2(\Gamma, \Z) = 0$ such that $G = [\Gamma, \Gamma]$. Their application of Theorem~C to $G = \Gamma_{\mathcal{W}}$ in \cite[\S7.4]{BridsonReidC} depends on two properties of $\Gamma_{\mathcal{O}}^1$: first, that $[\Gamma_{\mathcal{O}}^1, \Gamma_{\mathcal{O}}^1] = \Gamma_{\mathcal{W}}$, and second that $H_2(\Gamma_{\mathcal{O}}^1, \Z) = 0$. However, when they subsequently wished to apply their Theorem~C to produce Grothendieck pairs in $\Gamma_{\mathcal{O}}^1 \times \Gamma_{\mathcal{O}}^1$, they were unable to do so, as although $[\Gamma_{\mathcal{O}}, \Gamma_{\mathcal{O}}] = \Gamma_{\mathcal{O}}^1$, they stated that ``unfortunately, we do not know that $H_2(\Gamma_{\mathcal{O}}, \Z) = 0$'' \cite[Remark~7.3]{BridsonReidC}. Answering this question was the initial seed for the entire present article. Unfortunately, our answer turns out to be negative: 

\begin{proposition*}[See Proposition~\ref{Prop:sad-news}]
We have $H_2(\Gamma_{\mathcal{O}}, \Z) \cong \Z / 2\Z$. More generally, if $Q \leq \Isom(\mathcal{W})$ and $\Gamma$ is an extension of $\Gamma_{\mathcal{W}}$ by $Q$, then $H_2(\Gamma, \Z) \cong H_2(Q, \Z)$.
\end{proposition*}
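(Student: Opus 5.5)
The natural tool is the Lyndon--Hochschild--Serre spectral sequence for
\[
1 \to \Gamma_{\mathcal W} \to \Gamma \to Q \to 1,
\qquad
E^2_{p,q} = H_p(Q, H_q(\Gamma_{\mathcal W},\Z)) \Longrightarrow H_{p+q}(\Gamma,\Z).
\]
I will show that the only surviving term in total degree $2$ is $E^2_{2,0} = H_2(Q,\Z)$, and that no differential can hit it.

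\emph{Vanishing of $E^2_{1,1}$ and $E^2_{0,2}$.} The Weeks manifold is a rational homology sphere, with $H_1(\Gamma_{\mathcal W},\Z) \cong \Z/5 \oplus \Z/5$. By Poincaré duality and the universal coefficient theorem, $H_2(\Gamma_{\mathcal W},\Z) \cong H^1(\mathcal W,\Z) = 0$, so $E^2_{p,2}=0$ for all $p$. Next, $Q \le D_6$ has order dividing $12$, which is coprime to $5$. Hence $H_p(Q, M)=0$ for $p\ge 1$ whenever $M$ is a finite $5$-group, since such homology is killed both by $|Q|$ and by $|M|$. In particular $E^2_{p,1}=0$ for $p\ge 1$. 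So in total degree $2$ only $E^2_{2,0} = H_2(Q,\Z)$ remains.

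\emph{Differentials out of $E^2_{2,0}$ and $E^3_{2,0}$.} The differential $d_2\colon E^2_{2,0}\to E^2_{0,1}$ lands in the $5$-group $H_1(\Gamma_{\mathcal W})_Q$, while its source $H_2(Q,\Z)$ is a finite group of order prime to $5$, so $d_2=0$. The differential $d_3\colon E^3_{3,0}\to E^3_{0,2}$ has target $0$. Therefore $E^\infty_{2,0}=E^2_{2,0}$, and it is the only nonzero graded piece of $H_2(\Gamma,\Z)$. This gives $H_2(\Gamma,\Z)\cong H_2(Q,\Z)$.

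A cleaner alternative is the five-term (Hopf) exact sequence
\[
H_2(\Gamma)\to H_2(Q)\to H_1(\Gamma_{\mathcal W})_Q \to \cdots
\]
combined with the filtration argument above. I would simply present the spectral sequence argument.

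\emph{Specialising to $\Gamma_{\mathcal O}$.} For $\Gamma=\Gamma_{\mathcal O}$ we have $Q\cong D_6$. The Schur multiplier of the dihedral group of order $12$ is $\Z/2\Z$, since $D_{2n}$ has multiplier $\Z/2$ for even $n$ (here $n=6$). Hence $H_2(\Gamma_{\mathcal O},\Z)\cong \Z/2\Z$.

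\emph{Main check.} The only real input is the structure of $H_*(\Gamma_{\mathcal W},\Z)$: that it is a rational homology sphere with $H_1\cong(\Z/5)^2$, which is standard and presumably recorded earlier in the paper, together with the coprimality of $5$ and $|Q|$. The argument is otherwise routine. The step that needs care is making sure no $5$-torsion survives in total degree $2$. That holds because $E_{0,2}$ vanishes outright, and every other term in total degree $2$ or adjacent positions involving $5$-torsion has $p\ge1$ and so vanishes by coprimality.
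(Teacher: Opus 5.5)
Your argument is correct and is essentially the paper's proof. The paper proves Lemma~\ref{Lem:easy-SES} by exactly this Lyndon--Hochschild--Serre argument: Poincar\'e duality kills $E^2_{p,2}$, transfer plus coprimality of $|Q|$ with $25$ kills $E^2_{p,1}$ for $p>0$, and the transgression $H_2(Q,\Z)\to (\Gamma_{\mathcal W}^{\ab})_Q$ vanishes for order reasons; it then specialises to $Q\cong D_6$, whose Schur multiplier is $\Z/2\Z$.

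One small indexing slip: the $d_3$ you cite, $E^3_{3,0}\to E^3_{0,2}$, is not a differential out of $E^3_{2,0}$, which instead lands in $E^3_{-1,2}=0$. Since $E_{0,2}$ vanishes anyway, this does not affect your conclusion.
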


Thus we answer Bridson \& Reid's question negatively, closing their suggested path for applying Theorem~C to produce Grothendieck pairs in $\Gamma_{\mathcal{O}}^1 \times\Gamma_{\mathcal{O}}^1$. However, our next result is that another path is possible. Namely, we will consider either the orbifold fundamental group $\Gamma_{1}^\Theta$ resp.\ $\Gamma_2^\Theta$ of the orbifold $\Theta_1(2,2,3)$ resp.\ $\Theta_2(2,2,3)$, both groups of which are extensions of $\Gamma_{\mathcal{W}}$ by the symmetric group $S_3$. We will show that they satisfy $[\Gamma_i^\Theta, \Gamma_i^\Theta] = \Gamma_{\mathcal{O}}^1$ for $i=1, 2$ and, by the above proposition, since $H_2(S_3, \Z) = 0$, we also have $H_2(\Gamma_i^\Theta, \Z) = 0$. Theorem~C thus yields the following, successfully constructing the Grothendieck pairs Bridson \& Reid sought in $\Gamma_{\mathcal{O}}^1 \times \Gamma_{\mathcal{O}}^1$. 

\begin{corollary*}[See Corollary~\ref{Cor:Groth-pairs-inGamma1}]
There exist uncountably many non-isomorphic groups $P_\lambda$ with embeddings $P_\lambda \hookrightarrow \Gamma_{\mathcal{O}}^1 \times \Gamma_{\mathcal{O}}^1$ that induce an isomorphism of profinite completions, and infinitely many of these $P_\lambda$ are finitely generated. 
\end{corollary*}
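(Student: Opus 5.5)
We will deduce the corollary from Bridson \& Reid's Theorem~C (Theorem~\ref{ThmC-BridsonReid}), taking $G = \Gamma_{\mathcal{O}}^1$ and taking as the auxiliary group $\Gamma = \Gamma_1^\Theta$, an extension of $\Gamma_{\mathcal{W}}$ by $S_3$. The hypotheses of Theorem~C fall into two kinds. The first kind is the standard structural input on $G$. We need $G$ to be finitely presented and residually finite, which holds since $G$ is a cocompact lattice in $\PSL_2(\C)$ and hence linear. We also need $G$ to have the required cohomological and finiteness properties, such as the profinite good behaviour and the vanishing needed in Bridson \& Reid's setup. These hold for $\Gamma_{\mathcal{O}}^1$ for the same reasons they hold for $\Gamma_{\mathcal{W}}$ in \cite[\S7.4]{BridsonReidC}: cocompact Kleinian groups are good in the sense of Serre, are virtually special, and so on. We will check these item by item against the statement of Theorem~\ref{ThmC-BridsonReid}, citing the same sources Bridson \& Reid use. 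This requires no new work.

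The second kind of hypothesis is the substantive one. We must exhibit $\Gamma$ with $[\Gamma,\Gamma] = G$ and $H_2(\Gamma,\Z) = 0$, together with whatever finiteness is required of $\Gamma/G$. For homology we apply Proposition~\ref{Prop:sad-news} with $Q = S_3 \leq D_6 \cong \Isom(\mathcal{W})$. This gives $H_2(\Gamma_1^\Theta,\Z) \cong H_2(S_3,\Z)$. Since $S_3$ has cyclic Sylow subgroups, its Schur multiplier is $0$, so $H_2(\Gamma_1^\Theta,\Z) = 0$.

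For the derived subgroup, first note that $\Gamma_1^\Theta/\Gamma_{\mathcal{W}} \cong S_3$ has abelianisation $\Z/2\Z$. Hence $[\Gamma_1^\Theta,\Gamma_1^\Theta]$ maps onto $A_3 = [S_3,S_3]$, and its preimage is an index-$3$ overgroup of $\Gamma_{\mathcal{W}}$ inside $\Gamma_1^\Theta$. We must show two things:
\begin{enumerate}
\item this index-$3$ overgroup is exactly $\Gamma_{\mathcal{O}}^1$, i.e.\ the copy of $S_3$ in $D_6$ defining $\Theta_1(2,2,3)$ contains the $C_3$ with $\Gamma_{\mathcal{O}}^1/\Gamma_{\mathcal{W}} \cong C_3$, and
\item $[\Gamma_1^\Theta,\Gamma_1^\Theta]$ contains all of $\Gamma_{\mathcal{W}}$, equivalently $H_1(\Gamma_1^\Theta,\Z) \cong \Z/2\Z$.
\end{enumerate}
Both are finite computations. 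From an explicit orbifold presentation of $\Gamma_1^\Theta$, as in \cite{Mednykh1998} or as used for the $H_2$ computation earlier, we compute the abelianisation. If it is $\Z/2\Z$, then the derived subgroup has index $2$ and contains $\Gamma_{\mathcal{W}}$, which has index $6$. So the derived subgroup is the unique index-$2$ subgroup, namely the preimage of $A_3$. Since $D_6$ contains a unique $C_3$, this preimage is $\Gamma_{\mathcal{O}}^1$, which settles (1) automatically.

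Theorem~C then produces uncountably many pairwise non-isomorphic $P_\lambda \hookrightarrow G \times G$ inducing isomorphisms of profinite completions, with infinitely many of them finitely generated, as stated. The main obstacle is confirming that $H_1(\Gamma_1^\Theta,\Z) \cong \Z/2\Z$. If the abelianisation were larger, for example with extra $2$-torsion, the derived subgroup would be a proper subgroup of $\Gamma_{\mathcal{O}}^1$. My first approach is direct abelianisation of a Wirtinger-type orbifold presentation. Failing that, I would use the five-term/LHS sequence for $\Gamma_{\mathcal{W}} \to \Gamma_1^\Theta \to S_3$ with $H_1(\Gamma_{\mathcal{W}}) = (\Z/5)^2$ and show that the coinvariants $H_1(\Gamma_{\mathcal{W}})_{S_3}$ vanish: the order-$3$ isometry acts on $(\Z/5)^2$ without fixed vectors, since its eigenvalues are the primitive cube roots of unity, which lie in $\mathbb{F}_{25}$. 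I would then try $\Gamma_2^\Theta$ if $\Gamma_1^\Theta$ misbehaves.
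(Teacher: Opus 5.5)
Your proposal is correct and follows essentially the paper's route: you apply Bridson--Reid's Theorem~C with $\Gamma=\Gamma_1^\Theta$, obtain $H_2(\Gamma_1^\Theta,\Z)\cong H_2(S_3,\Z)=0$ from Lemma~\ref{Lem:easy-SES} (using $\gcd(6,25)=1$), and identify $[\Gamma_1^\Theta,\Gamma_1^\Theta]=\Gamma_{\mathcal{O}}^1$ by checking $H_1(\Gamma_1^\Theta,\Z)\cong\Z/2\Z$ and noting that $\Gamma_{\mathcal{O}}^1$ is then the unique index-$2$ subgroup. The one thing to tidy is your first paragraph: goodness and virtual specialness are not hypotheses of Theorem~C, and the actual hypotheses are automatic for cocompact lattices in $\PSL_2(\C)$, which is exactly what Corollary~\ref{Cor:ThmC-for-PSL2C} packages; those hypotheses are hyperbolicity with virtually cyclic centralizers, non-elementary relative hyperbolicity, and only finitely many primes occurring as orders of torsion elements.
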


The above proposition, and the question it resolved, point to a gap in the library of techniques for computing the second homology group of lattices in $\PSL_2(\C)$. Seeking to fill this gap, we will devote the remainder of the article to discussing such techniques. In principle, for any $n \geq 1$ the group $H_n(\Gamma, \Z)$ is computable for any closed hyperbolic $3$-orbifold group $\Gamma$. We will deduce this in \S\ref{Subsec:rewriting-and-3-manifolds} as follows: Hermiller \& Shapiro \cite{Hermiller1999b} proved that the Geometrization Theorem and the Virtual Haken Theorem combined yield that every such $\Gamma$ admits a \textit{finite complete rewriting system}. Any group admitting such a system is effectively $\FP_\infty$, i.e.\ there exists an effectively computable finitely generated free resolution of $\Z$, yielding computable integral homology groups. This approach is, however, hopelessly impractical even in simple cases. 

The approach of computing homology groups via complete rewriting systems is not, however, a dead end, and we will use this to produce a practical algorithm for computing $H_2(\Gamma, \Z)$ of certain lattices. To do this, we consider groups $\Gamma$ that are extensions of $H$ by $Q$, where $H_2(H, \Z) = 0$ and $Q$ admits a finite complete rewriting system. We identify and correct a gap in the proof of a theorem due to Hermiller \& Meier \cite[Theorem~2.2]{Hermiller1999} on rewriting systems for extensions of groups. By placing a natural filtration on the aforementioned free resolution (the \textit{Kobayashi resolution}) arising from such rewriting systems, and by analysing the differentials in detail, we show that the resulting upper right-quadrant spectral sequence has its bottom two rows entirely computable under reasonable assumptions, along with the transgression maps. The assumption that $H_2(H, \Z) = 0$ ensures that this is all the information we need, and our approach amounts to proving the following theorem: 

\begin{theorem*}[See Theorem~\ref{Thm:main-thm-h2-G}]
Let $H$ be a finitely presented group with $H_2(H, \Z) = 0$. Suppose that $1 \to H \to G \to Q \to 1$ is an extension such that 
\begin{itemize}
\item $Q$ admits a finite complete rewriting system, 
\item a section map $s \colon Q \to G$ is effectively computable, and 
\item the conjugation action of $Q$ on $H^\ab$ is effectively computable.
\end{itemize}
Then there is a (practical) algorithm for computing $H_2(G, \Z)$. 
\end{theorem*}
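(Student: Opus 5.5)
The natural route is the Lyndon--Hochschild--Serre spectral sequence
\[
E^2_{p,q} = H_p(Q, H_q(H,\Z)) \Rightarrow H_{p+q}(G,\Z).
\]
Since $H_2(H,\Z)=0$, the row $q=2$ vanishes. On the line $p+q=2$ only two terms survive: $E^2_{2,0}=H_2(Q,\Z)$ and $E^2_{1,1}=H_1(Q,H^\ab)$. The relevant differentials are:
\begin{itemize}
\item $d^2\colon E^2_{2,0}\to E^2_{0,1}=H_0(Q,H^\ab)=(H^\ab)_Q$ and $d^2\colon E^2_{3,0}\to E^2_{1,1}$;
\item $d^3\colon E^3_{3,0}\to E^3_{0,2}=0$, which vanishes, so nothing else is needed.
\end{itemize}
So $H_2(G,\Z)$ is an extension of $\ker(d^2\colon H_2(Q)\to (H^\ab)_Q)$ by $H_1(Q,H^\ab)/\im(d^2\colon H_3(Q)\to H_1(Q,H^\ab))$. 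The plan is to make every ingredient of this computation explicit on a single finitely generated free resolution, so that the extension problem disappears as well.

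First, from the finite complete rewriting system for $Q$ I build the Kobayashi (Anick--Squier) resolution $F_\bullet\to\Z$ over $\Z Q$. Its bases in degrees $0,1,2,3$ are the empty word, the generators, the rules, and the critical pairs (overlaps), and its differentials are computed by normal-form reduction. Tensoring with $\Z$ and with $H^\ab$ gives finitely generated chain complexes. The $Q$-action on $H^\ab$ is computable by hypothesis, and $H^\ab$ is computable since $H$ is finitely presented. Smith normal form therefore computes $E^2_{p,0}$ and $E^2_{p,1}$ for $p\le 3$.

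Second, and more to the point, I would avoid the separate spectral sequence and its extension problem. Using the section $s$ (a rewriting system for $G$ in the spirit of Hermiller--Meier, once its gap is corrected), I build a partial free $\Z G$-resolution of $\Z$ through degree $3$. Its generators are those of $H$, which is finitely presented, together with lifts of the Kobayashi cells of $Q$, and it is filtered by the $Q$-degree. For each defining relation of $Q$, $r=_Q 1$, the word $s(r)$ evaluates to an element $h_r\in H$, computable from $s$; these elements encode the extension cocycle. For each overlap of $Q$-rules, the two resolutions of the overlap give two words in $G$ whose ratio lies in $H$. The images of these elements in $H^\ab$, twisted by the action, are exactly the matrix entries of the transgressions $d^2$. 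The filtration spectral sequence of this complex coincides with LHS from $E^2$ on. Because the $q=2$ row is zero, the bottom two rows determine $H_2$ completely, and the total complex in degree $2$ is an explicit integer matrix problem. Assembling its boundary maps $C_3\to C_2\to C_1$ and taking Smith normal form gives $H_2(G,\Z)$ directly, with no extension ambiguity.

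The main obstacle is the second step. One has to prove that the twisted tensor complex built from $F_\bullet$, $H^\ab$ and the cocycle data really is quasi-isomorphic, through total degree $2$, to $\Z\otimes_{\Z G}$ of a genuine resolution. In particular the degree-$3$ contracting homotopy must be compatible with the $H$-part. The fact that $H$-relators contribute nothing beyond $H^\ab$ relies on $H_2(H)=0$, which lets us replace the $H$-part of the resolution by $H^\ab$ in low degrees (a Hopf-formula argument). My first attempt would be a perturbation-lemma / twisted tensor product construction (Wall's method): tensor $F_\bullet$ with a resolution of $H$ and perturb using the section. Truncating at $q\le 1$ is then justified because $H_2(H)=0$ makes the error terms land in acyclic pieces.
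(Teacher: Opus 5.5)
Your $E^2$ bookkeeping and overall architecture match the paper: the corrected Hermiller--Meier system $\cR_G$, its Kobayashi resolution filtered by the number of $B$-entries, $H_2(H,\Z)=0$ to kill the $q=2$ row, and the defect words $s(u,v)$ giving $d^2_{2,0}$. Your variant is sound and improves on the paper's final step, which only sketches the extension problem. You take the homology of the collapsed two-row complex $Y$: row $0$ is the Kobayashi complex of $\cR_Q$ tensored with $\Z$, and row $1$ has groups $\Z[V^{(p)}_Q]\otimes H^{\ab}$. This needs no perturbation theory. In total degrees $\le 3$, define a map $P_\bullet\otimes_{\Z G}\Z\to Y$ that is the identity on row $0$, sends $(q_1,\dots,q_p,a)\mapsto (q_1,\dots,q_p)\otimes[a]_{\ab}$ on row $1$, and kills rows $\geq 2$. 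It is a chain map, because the differential lowers the row only through its filtration-preserving part, which sends row $2$ onto the $H$-relations. Using $H_2(H,\Z)=0$, its kernel has vanishing $E^1$ in total degrees $1$ and $2$, so it induces $H_2(G,\Z)\cong H_2(Y)$. Since $Y_2$ has torsion summands, you need the relation matrix of $H^{\ab}$ alongside, not a single Smith normal form.

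The genuine gap is the step you defer, which is the actual content of the paper's proof (its Appendix). You must show that the differential, reduced modulo the pure-$H$ part $\fg_0P_2$, preserves the filtration and has computable filtration-$1$ and $2$ components. These must come from $\cR_Q$, $s$, $\varphi$ and $A^\ast\to H^{\ab}$ alone, since no usable rewriting system for $H$ exists: $\cR_H$ is an arbitrary, possibly infinite, black box, and finite presentability supplies no $3$-cells for $H$.

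Your suggested route does not supply this. The basic perturbation lemma needs a strong deformation retraction of each column onto its homology, which cannot exist over $\Z$ when $H^{\ab}$ has torsion (e.g.\ $\Z_5^2$ for the Weeks manifold). If you retract onto a free resolution of $H^{\ab}$ instead, the perturbed differential involves the contracting homotopy of the $H$-column, which is exactly the non-computable data.

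The missing idea is Kobayashi's Lemma~\ref{Lem:Kobayashi-c}. No left-hand side of $\cR_G$ that starts with a letter of $A$ involves $B$, so every $i_2$-term whose first entry lies in $A$ lies in $\fg_0P_2$ and may be discarded. An induction along Kobayashi's well-founded order then shows two things: $i_2$ on pure-$B$ terms agrees with $i_2^Q$ modulo $\fg_0P_2$, and the $A$-prefixes $h_i$ of the normal forms of the suffixes after each letter $b_i$ contribute exactly the filtration-$1$ terms $(b_i,h_i^{(\mu)})$.

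This also shows your description of $d^2_{3,0}$ is off. Its values are chains in $E^1_{1,1}=\Z[B]\otimes H^{\ab}$, recorded letter by letter, and cannot be read off from one element of $H$ per overlap. Indeed, since $\cR_G$ is complete, the two reductions of an overlap represent the same element of $G$, so their ``ratio'' is trivial.
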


The advantage of this approach is that it requires no direct manipulations of the group $H$ beyond abelianization computations. In particular, in our orbifold setting, we see that for any hyperbolic rational homology $3$-sphere $M$ and any $Q \leq \Isom(M)$, the group $H_2(\pi_1^\orb(M / Q), \Z)$ can be computed. As a special case, we can algorithmically -- by an algorithm that can be carried out by hand! -- verify our above answer to the question of Bridson \& Reid that $H_2(\Gamma_{\mathcal{O}}, \Z) \cong \Z / 2\Z$, since the Weeks manifold $\mathcal{W}$ is a rational homology $3$-sphere. As a showcase of the generality of our methods, we will also apply it in the case of the Fibonacci manifolds $M_n$, which are $n$-fold cyclic covers of $S^3$ branched over the figure-eight knot, and which are hyperbolic for $n \geq 4$. The question of Galois rigidity for $\pi_1(M_n)$ and some related groups has seen some study recently \cite[\S6]{Bridson2022}, and Bridson \& Reid \cite[\S7.5]{BridsonReidC} observe that certain cyclic extension $\Delta_n$ of $\pi_1(M_n)$ have $H_2(\Delta_n, \Z) = 0$. We recover these results, and more, by our algorithmic results. In the case of orientation-preserving isometries, we prove the following result:

\begin{theorem*}[See Theorem~\ref{Thm:prime-fibonacci}]
Let $p>3$ be prime, let $M_p$ be the Fibonacci manifold. Let $Q \leq \Isom^+(M_p)$, and let $G = \pi_1^\orb(M_p / Q)$. Then $H_2(G, \Z) \cong H_2(Q, \Z)$.
\end{theorem*}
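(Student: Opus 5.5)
The plan is to run the Lyndon--Hochschild--Serre spectral sequence for the extension $1 \to H \to G \to Q \to 1$, where $H = \pi_1(M_p)$ and $G = \pi_1^\orb(M_p/Q)$. This is exactly the setting of Theorem~\ref{Thm:main-thm-h2-G}. Since $M_p$ is a closed orientable hyperbolic rational homology sphere, we have $H_2(H,\Z) \cong H^1(M_p,\Z) = 0$ by Poincaré duality. Hence the row $q=2$ vanishes, and the only terms contributing to $H_2(G,\Z)$ are:
\begin{itemize}
\item $E^2_{2,0} = H_2(Q,\Z)$;
\item $E^2_{1,1} = H_1(Q, H^\ab)$, where $H^\ab = H_1(M_p,\Z)$ is finite;
\item the differential $d_2 \colon E^2_{3,0} = H_3(Q,\Z) \to E^2_{1,1}$.
\end{itemize}
The claim $H_2(G,\Z)\cong H_2(Q,\Z)$ therefore reduces to two facts. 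First, $H_1(Q, H_1(M_p)) = 0$. Second, the differential $d_2 \colon H_2(Q,\Z)\to H_0(Q, H_1(M_p)) = H_1(M_p)_Q$ is zero. Together with $E^2_{0,2}=0$, this gives $E^\infty_{2,0} = H_2(Q,\Z)$ and no extension problem.

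First I would pin down the data. It is classical that $H_1(M_n,\Z) \cong \Z/f_{n-1}\oplus\Z/(f_{n-1}+f_{n+1})$ (Fibonacci and Lucas type numbers), or equivalently $H_1(M_n) \cong \Z[t]/(\Delta(t), t^n-1)$ as a module over the deck group. Here $\Delta(t) = t^2-3t+1$ is the Alexander polynomial of the figure-eight knot. Next I need $\Isom^+(M_p)$. For $p>3$ prime I expect it to be the dihedral group $D_{2p}$ generated by the deck rotation $t$ of order $p$ and an involution coming from the symmetries of the figure-eight knot, possibly times a further $\Z/2$ from strong invertibility and amphicheirality. I would take this from the known computation of the isometry groups of Fibonacci manifolds (Mednykh et al.). The subgroups $Q$ are then cyclic, dihedral, or small products, and I would handle them up to conjugacy.

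The key computation is $H_1(Q, A)$ and $A_Q$ for $A=H_1(M_p)$. When $p\nmid |A|$ and $|Q|$ is coprime to $|A|$, both $H_1(Q,A)$ and the relevant part of $A_Q$ fall out by averaging. In general I would use the module structure instead. The element $t$ acts on $A=\Z[t]/(\Delta,t^p-1)$, and $1-t$ acts invertibly on $A$ because $\Delta(1) = -1$ is a unit. Hence for $\langle t\rangle\le Q$ we get $H_*(\langle t\rangle, A)=0$ in all degrees, since homology of a cyclic group is computed by $1-t$ and the norm. A Hochschild--Serre argument (or restriction--corestriction) then propagates this vanishing to any $Q$ containing $t$. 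For $Q$ not containing $t$, $Q$ is a $2$-group of order at most $4$, and $|A|$ is odd: it divides $\mathrm{Res}(\Delta, t^p-1)$, which is a Lucas-type number and odd for odd $p$. So $H_k(Q,A)=0$ for $k\ge1$, and $A_Q$ is a quotient of $A^Q$-sized odd data.

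The main obstacle is the vanishing of the differential $d_2\colon H_2(Q,\Z)\to A_Q$ in the case $t\notin Q$, where $A_Q$ can be nonzero. My first approach is parity: $H_2(Q,\Z)$ is a $2$-group, since $Q$ has order dividing $4$, while $A_Q$ has odd order, so any homomorphism between them is zero. If $Q$ contains $t$ we already have $A_Q=0$. If the isometry group turns out larger than expected, the same parity/invertibility dichotomy should still apply. Should it fail, I would fall back on the algorithm of Theorem~\ref{Thm:main-thm-h2-G} with explicit sections of $Q$ to compute the transgression directly.
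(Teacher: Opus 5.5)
Your proof is correct. In the case $p\nmid |Q|$ it coincides with the paper's argument: $Q$ is a $2$-group and $|H_1(M_p)|=L_p^2$ is odd, so $H_1(Q,A)=0$ by the transfer, and the transgression $H_2(Q,\Z)\to A_Q$ vanishes for parity reasons.

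In the case $p\mid |Q|$ you take a genuinely different route.
\begin{itemize}
\item The paper works only with orders. The congruence $L_p\equiv 1 \pmod{2p}$ gives $\gcd(|A|,|Q|)=1$, so $E^2_{1,1}=0$ exactly as in Lemma~\ref{Lem:easy-SES}. It also checks $A_Q=0$ by hand from the relations $x_ix_{i+1}=x_{i+2}$ and the shift $\rho$. Coprimality alone would already kill the transgression, so in effect the whole theorem is an instance of Lemma~\ref{Lem:easy-SES}.
\item You instead use the Alexander-module structure $A\cong\Z[t]/(\Delta,t^p-1)$. Since $\Delta(1)=-1$, the element $1-t$ is a unit on $A$, so $H_\ast(\langle t\rangle,A)=0$ in every degree, and this passes to any $Q\supseteq\langle t\rangle$.
\end{itemize}
Your route never uses $p\nmid|A|$, so it is more robust. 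The same argument gives $H_\ast(Q,A)=0$, and hence $H_2(G,\Z)\cong H_2(Q,\Z)$, for every $n\geq 4$ and every $Q$ containing the whole deck group $C_n$ as a normal subgroup. This includes cases where $\gcd(|Q|,|A|)\neq 1$, such as $n=10$ (where $5$ divides both), in which Lemma~\ref{Lem:easy-SES} does not apply. The paper's route, in exchange, needs only the orders $|A|=L_p^2$ and $|Q|\mid 4p$ plus a classical congruence for Lucas numbers, with no knot-theoretic input.

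There are two slips, neither of which your argument relies on:
\begin{itemize}
\item The formula $H_1(M_n)\cong\Z/f_{n-1}\oplus\Z/(f_{n-1}+f_{n+1})$ is wrong. For $n=5$ it has order $33$, whereas $H_1(M_5)\cong(\Z/11)^2$. Your module description is correct, and it is what you actually use.
\item $|\mathrm{Res}(\Delta,t^p-1)|=L_p^2$ is not odd for every odd $p$; it equals $16$ for $p=3$. The right reason for oddness is that $t^2+t+1$ and $t^p-1$ are coprime over $\mathbb{F}_2$ precisely when $3\nmid p$, which holds for primes $p>3$.
\end{itemize}
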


In particular, this implies that if $Q$ is a proper subgroup of $\Isom^+(M_p)$ which is not isomorphic to $C_2\times C_2$, then $H_2(G, \Z) = 0$. Finally, we prove that the isomorphism $H_2(G, \Z) \cong H_2(Q, \Z)$ holds also for $G = \pi_1^\orb(M_4 / Q)$, i.e.\ the case $n=4$. 

\

The outline of the article is as follows. By necessity of presentation, the results will be presented in a somewhat different order than the exposition given in the above introduction. The first half of the article will be focussed on developing a practical algorithm from the theory of rewriting systems for the computation of the second homology group for lattices $\Gamma < \PSL_2(\mathbb{C})$ arising as finite extensions of rational homology $3$-spheres. The second half will use this technology in specific classes of examples, along with a range of other related results. Concretely, we have the following:
\begin{itemize}
\item In \S\ref{Sec:Background}, we will present the necessary background results on profinite rigidity, Grothendieck pairs, Bridson \& Reid's Theorem~C, complete rewriting systems, the Kobayashi resolution, and complete rewriting systems for $3$-manifold groups. 
\item In \S\ref{Sec:new-tech}, we first present a practical lemma (Lemma~\ref{Lem:easy-SES}) for computing the second homology group of orbifold groups, and next we present and correct the Hermiller--Meier complete rewriting system for groups $G$ arising as extensions of groups admitting complete rewriting systems. In \S\ref{Subsec:algorithm-description} we use such rewriting systems to find an algorithm for computing terms and maps in a spectral sequence for $H_n(G, \Z)$, allowing us to compute $H_2(G, \Z)$ under reasonable assumptions. The lengthy, but straightforward, details of the correctness of this algorithm and some shortcuts therein are relegated to an Appendix.
\item In \S\ref{Sec:Weeks-manifold}, we use our new techniques on the Weeks manifold and its finite extensions by isometries, and compute the second homology group of all such extensions. We also prove their profinite rigidity. 
\item In \S\ref{Sec:Fibonacci}, we analyse various cases involving the Fibonacci manifolds $M_n$ for $n \geq 4$, and prove various results about the second homology groups of extensions of $\pi_1(M_n)$ by groups of orientation-preserving isometries $Q \leq \Isom^+(M_n)$.
\end{itemize}

\section*{Acknowledgements} 

\noindent I would like to thank both Martin R.\ Bridson (Oxford) and Alan Reid (Rice / KIAS) for many helpful discussions and  warm encouragement pertaining to this note and beyond. I also thank David Xu (KIAS) for useful discussions and the long list of typos which he will inevitably spot. Finally, I wish to thank Susan Hermiller (Nebraska-Lincoln) for discussions pertaining to Proposition~\ref{Prop:HermillerTheorem}, ultimately due to her and John Meier.

\clearpage

\section{Background}\label{Sec:Background}

\noindent Throughout this article, unless otherwise specified, all groups will be finitely generated. Furthermore, by a hyperbolic $3$-orbifold group we mean a discrete $\Gamma \leq \Isom(\mathbb{H}^3)$ acting properly discontinuously with finite volume quotient $\mathbb{H}^3/\Gamma$. We say that such a group is cocompact if $\mathbb{H}^3/\Gamma$ is compact; equivalently, the corresponding hyperbolic $3$-orbifold is closed. If $\Gamma$ is torsion-free, then we say it is a hyperbolic $3$-manifold group. We will not make any arguments requiring any in-depth knowledge of $3$-manifold theory. We refer the reader to the standard book by Maclachlan \& Reid \cite{MaclachlanReidBook} for a more thorough treatment, especially of the case of arithmetic hyperbolic $3$-manifolds.

\subsection{Profinite rigidity}\label{Subsec:profinite-rigidity}

Let $G$ be a finitely generated group, and denote by $\mathcal{F}(G)$ the set of all finite quotients of $G$. The \textit{genus} $\mathcal{C}(G)$ of $G$ is defined as the set of isomorphism classes of finitely generated, residually finite groups $G'$ such that $\mathcal{F}(G) = \mathcal{F}(G')$. The set $\mathcal{F}(G)$ forms an inverse system, with indexing given by the finite index normal subgroup $N \trianglelefteq G$ as follows: if $N_1 < N_2$, then $G/N_1 \twoheadrightarrow G / N_2$. The \textit{profinite completion} $\widehat{G}$ of $G$ is the inverse limit of this system. Somewhat surprisingly, for finitely generated groups $G_1, G_2$, we have $\mathcal{F}(G_1) = \mathcal{F}(G_2)$ if and only if $\widehat{G}_1 \cong \widehat{G}_2$ (either as topological \cite{Dixon1982} or as abstract \cite{Nikolov2007} groups).
We say that $G$ is \textit{profinitely rigid} if $\mathcal{C}(G) = \{ G \}$, i.e.\ if any finitely generated residually finite group with the same finite quotients as $G$ is necessarily isomorphic to $G$. The above implies that this is the same thing as saying that for all finitely generated residually finite $\Gamma$, we have $\widehat{\Gamma} \cong \widehat{G}$ implies $\Gamma \cong G$. This notion of profinite rigidity, where $\Gamma$ is arbitrary, is sometimes called \textit{absolute} profinite rigidity to distinguish it from the corresponding \textit{relative} notion, i.e.\ when $\Gamma$ is taken to belong to a certain class. 

As mentioned in the introduction, it is not difficult to prove that abelian groups are (absolutely) profinitely rigid, and genera in the nilpotent case, while not generally yielding profinite rigidity, can be understood with some more work. A significantly more difficult question is to find examples of \textit{full-sized} profinitely rigid groups, i.e.\ such groups containing subgroups isomorphic to $F_2$, the free group of rank two. In a landmark paper, Bridson, McReynolds, Reid, and Spitler \cite{BMRS} relatively recently produced the first such examples, arising as hyperbolic $3$-orbifold groups. The setting of hyperbolic geometry for producing profinite rigidity has seen much progress since then, and while we certainly cannot hope to survey all developments, we mention that Liu \cite{Liu2023} has shown that the genus of a hyperbolic $3$-manifold group $\Gamma$ relative to the class of hyperbolic $3$-manifold groups is finite (i.e.\ $\Gamma$ is almost relatively profinitely rigid), cf.\ also the result by Xu \cite{Xu2025} in the general $3$-manifold setting. Bridson, McReynolds, Reid, and Spitler \cite{BMRSTriangle} have also given several examples of (absolutely) profinitely rigid triangle groups. 

Many of the profinite rigidity results in the above setting can be extended to other lattices in $\PSL_2(\C)$, particularly those arising as finite extensions of lattices that are known to be profinitely rigid. In particular, one of the central results for deducing further results on profinite rigidity comes in the following form. 

\begin{theorem}[{\cite[Corollary~4.3]{Bridson2022}}]\label{Thm:genus-is-same-level}
Let $\Gamma < \Delta < \PSL_2(\C)$ be lattices such that $\Gamma$ is profinitely rigid. Then the profinite genus of $\Delta$ is a subset of the lattices in $\PSL_2(\C)$ that contain $\Gamma$ with index $[\Delta \colon \Gamma]$. 
\end{theorem}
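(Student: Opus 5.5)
Let $\Lambda$ be a finitely generated residually finite group with $\widehat{\Lambda} \cong \widehat{\Delta}$; we must show that $\Lambda$ is isomorphic to a lattice in $\PSL_2(\C)$ containing $\Gamma$ with index $n := [\Delta \colon \Gamma]$. The plan is to find inside $\Lambda$ a finite-index subgroup that is profinitely equivalent to $\Gamma$, use the rigidity of $\Gamma$ to identify it with $\Gamma$, and then push the identification up to $\Lambda$ using Mostow--Prasad rigidity.

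\textbf{Step 1.} Since $\Lambda$ and $\Delta$ are finitely generated, finite-index subgroups of each correspond bijectively, and index-preservingly, to open subgroups of the common profinite completion. So $\overline{\Gamma} \le \widehat{\Delta}$ determines $\Lambda_0 = \Lambda \cap \overline{\Gamma}$, with $[\Lambda \colon \Lambda_0] = n$ and $\widehat{\Lambda_0} \cong \overline{\Gamma} \cong \widehat{\Gamma}$. Here we use that lattices in $\PSL_2(\C)$ are finitely generated and residually finite, so the profinite topology induces the full profinite topology on $\Gamma$. Because $\Lambda_0$ is finitely generated and residually finite, rigidity of $\Gamma$ gives $\Lambda_0 \cong \Gamma$.

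\textbf{Step 2.} Next we realise $\Lambda$ as a lattice. Replacing $\Gamma$ by its normal core in $\Delta$ would destroy the hypothesis, so instead we work with the normal core $N$ of $\Lambda_0$ in $\Lambda$. This $N$ has finite index in $\Lambda_0 \cong \Gamma$, hence is a lattice in $\PSL_2(\C)$ up to this identification. The conjugation action $\Lambda \to \operatorname{Aut}(N)$ has kernel $C_\Lambda(N)$.

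\textbf{Step 3.} We need $C_\Lambda(N) = 1$, and this is the key point. The group $C_\Lambda(N) \cap N = Z(N)$ is trivial because $N$ is a non-elementary lattice. Hence $C_\Lambda(N)$ embeds into the finite group $\Lambda / N$ and is a finite normal subgroup of $\Lambda$. To kill it, we pass to the profinite side: its closure is a finite normal subgroup of $\widehat{\Lambda} \cong \widehat{\Delta}$. So it suffices to know that $\widehat{\Delta}$ has no nontrivial finite normal subgroups. I expect this to be the main obstacle. I would argue it as follows. A finite normal subgroup $F$ of $\widehat{\Delta}$ intersects the open torsion-free-type subgroup $\widehat{N'}$ trivially, for some torsion-free normal finite-index $N' \le N$ (which exists by Selberg's lemma). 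Therefore $F$ centralises an open subgroup of $\widehat{N'}$, since the commutator $[F,\widehat{N'}] \subseteq F\cap \widehat{N'} = 1$. Now use that $N'$ is a good group with trivial profinite centraliser phenomena: centralisers of open subgroups of $\widehat{N'}$ in $\widehat{\Delta}$ are trivial. This can be seen via the action on the profinite tree or JSJ, or via torsion elements of $\widehat{\Delta}$ being conjugate into $\Delta$, which holds by goodness and Wilton--Zalesskii. Hence $F = 1$.

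\textbf{Step 4.} Granting Step 3, $\Lambda$ embeds in $\operatorname{Aut}(N) \cong \operatorname{Comm}$-level data. By Mostow--Prasad, $\operatorname{Aut}(N)$ is the normaliser of $N$ in $\Isom(\mathbb{H}^3)$. Thus $\Lambda$ is a discrete group containing the lattice $N$ with finite index, hence a lattice in $\Isom(\mathbb{H}^3)$. To see that it lies in $\PSL_2(\C)$, note that orientation reversal would give an index-$2$ subgroup detectable in finite quotients; comparing with $\Delta \le \PSL_2(\C)$ via the matching profinite structure rules this out. Finally, $\Lambda_0 \cong \Gamma$, and Mostow--Prasad identifies this abstract isomorphism with conjugation in $\Isom(\mathbb{H}^3)$. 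After conjugating, $\Lambda$ therefore contains $\Gamma$ with index $n$, which proves the statement.
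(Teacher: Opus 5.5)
The paper does not prove this statement; it quotes it from Bridson--Reid. Your outline is the natural route: rigidity of $\Gamma$ applied to the finite-index subgroup $\Lambda_0$, then Mostow--Prasad applied to the conjugation action on a normal core once finite normal subgroups are excluded. Steps 1 and 2, and the embedding $\Lambda \hookrightarrow N_{\Isom(\mathbb{H}^3)}(N)$ in Step 4, are fine. Two steps, however, are not actually proved.

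Step 3 ends in an assertion rather than an argument. JSJ decompositions and profinite trees play no role here. The claim that ``torsion in $\widehat{\Delta}$ is conjugate into $\Delta$'' is stronger than you need, and you do not establish it. The claim that $\widehat{\Delta}$ has no nontrivial finite normal subgroup does follow from goodness alone:
\begin{itemize}
\item Take $E \trianglelefteq \Delta$ torsion-free of finite index. Since $E$ is good with $\mathrm{cd}\, E \le 3$, the group $\widehat{E}$ is torsion-free.
\item For $f \in F$ of prime order $p$, the open subgroup $U = \widehat{E}\langle f \rangle$ equals $\widehat{E} \times \langle f \rangle$, because $[F, \widehat{E}] \subseteq F \cap \widehat{E} = 1$.
\item The subgroup $\Delta_1 = \Delta \cap U$ contains $E$ with index $p$. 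If $\Delta_1$ were torsion-free, then $U = \widehat{\Delta_1}$ would be torsion-free. So $\Delta_1$ contains some $t$ of order $p$ with $t \notin \widehat{E}$.
\item Writing $t = e f^j$ with $e \in \widehat{E}$ gives $e^p = t^p = 1$, hence $e = 1$. So $t = f^j$ centralises $E$, which is impossible for a nontrivial isometry and a lattice $E$.
\end{itemize}

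The more serious problem is the orientation step in Step 4, which as written is not an argument. An index-$2$ subgroup is no obstruction, because $\Delta$ itself may have index-$2$ subgroups; several Weeks orbifold groups in this paper have $\Z/2\Z$ in their abelianization. So nothing is ``ruled out'', and you need a profinite invariant that genuinely sees orientation. For cocompact $\Delta$, which is all this paper uses, the following works:
\begin{itemize}
\item Choose $D \trianglelefteq \Lambda$ torsion-free of finite index inside $\Lambda \cap \PSL_2(\C)$. Let $\Phi \colon \widehat{\Lambda} \to \widehat{\Delta}$ be the isomorphism and set $E = \Delta \cap \Phi(\overline{D})$, which is torsion-free because $\widehat{E} \cong \widehat{D}$ is.
\item By goodness, $H^3(\widehat{D}, \mathbb{F}_3) \cong H^3(D, \mathbb{F}_3) \cong \mathbb{F}_3$, and likewise for $E$. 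Moreover $\Phi$ intertwines the conjugation actions on these groups.
\item $\widehat{\Delta}$ acts trivially on $H^3(\widehat{E}, \mathbb{F}_3)$: the action factors through $\Delta/E$, which acts by orientation-preserving isometries of the closed manifold $\mathbb{H}^3/E$.
\item An orientation-reversing $\lambda \in \Lambda$ acts by $-1$ on $H^3(\widehat{D}, \mathbb{F}_3)$, a contradiction.
\end{itemize}
For non-uniform lattices these $H^3$ groups vanish, and you would need a different invariant (for instance one involving the peripheral structure). As written, your proposal therefore does not cover the statement in the generality in which it is stated.
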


In \S\ref{Sec:Weeks-manifold}, and particularly in \S\ref{Subsec:Profinite-rigidity-of-Weeks}, we will use this theorem to deduce profinite rigidity for all finite extensions of the Weeks manifold group arising from isometries of the corresponding manifold.

\subsection{Grothendieck Pairs and Theorem C}\label{Subsec:GrothPairsAndTheoremC}

In \cite{BridsonReidC}, the authors establish a powerful technique for constructing Grothendieck pairs inside direct products of groups whose second homology is trivial in the relatively hyperbolic setting. Specifically, they obtained the following result: 

\begin{theorem}[Bridson \& Reid \cite{BridsonReidC}]\label{ThmC-BridsonReid}
Let $\Gamma$ be a finitely presented, non-elementary, relatively hyperbolic group and suppose that for infinitely many primes $p$ there is not an element of order $p$ in $\Gamma$. Let $G$ be either a hyperbolic group in which centralizers of non-trivial elements are virtually cyclic; or a central extension of such a group. If $H_2(\Gamma, \Z)=0$ and $G$ maps onto a subgroup of finite index in $[\Gamma, \Gamma]$, then:
\begin{enumerate}
\item there exist uncountably many pairwise non-isomorphic groups $P_\lambda$ and embeddings $P_\lambda \hookrightarrow G \times G$ that induce an isomorphism of profinite completions, and
\item infinitely many of these $P_\lambda$ are finitely generated.
\end{enumerate}
\end{theorem}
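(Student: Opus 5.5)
The plan is to run the Platonov--Tavgen/Bridson--Grunewald fibre product construction. Everything then reduces to producing uncountably many suitable quotients of $\Gamma$.

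\textbf{Fibre product criterion.} Suppose $f \colon G \to Q$ is surjective, where $Q$ is finitely generated, $\widehat{Q}=1$ and $H_2(Q,\Z)=0$. Let $P = \{(g,h) \in G \times G : f(g)=f(h)\}$. Then the inclusion $P \hookrightarrow G\times G$ induces an isomorphism $\widehat{P}\cong \widehat{G\times G}$; this is the Platonov--Tavgen criterion. If moreover $Q$ is finitely presented, then $\ker f$ is finitely generated as a normal subgroup. It follows that $P$ is generated by the diagonal together with $\ker f\times 1$, so $P$ is finitely generated. So I want a family $\{Q_\lambda\}$ of quotients of $\Gamma$ with four properties:
\begin{enumerate}
\item each $Q_\lambda$ is infinite, $\widehat{Q_\lambda}=1$ and $H_2(Q_\lambda,\Z)=0$;
\item the $Q_\lambda$ fall into uncountably many isomorphism classes, so that the resulting $P_\lambda$ are pairwise non-isomorphic;
\item infinitely many of the $Q_\lambda$ are finitely presented;
\item $G$ maps onto each $Q_\lambda$.
\end{enumerate}

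\textbf{Surjectivity of $G \to Q_\lambda$.} Let $\pi\colon\Gamma\to Q$ be a quotient map with $\widehat Q=1$. Then $Q$ is perfect, so $\pi([\Gamma,\Gamma])=[Q,Q]=Q$. The image of $G$ has finite index in $\pi([\Gamma,\Gamma]) = Q$, and a group with no finite quotients has no proper finite-index subgroups, so $G$ surjects onto $Q$. When $G$ is a central extension of a hyperbolic group, I would check separately that the centre causes no trouble; it maps into the centre of $Q$, and the criterion above does not need injectivity.

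\textbf{Constructing the quotients.} I would use Dehn filling and small cancellation over relatively hyperbolic groups, as developed by Osin and Olshanskii. Since $\Gamma$ is non-elementary relatively hyperbolic, adding suitable long relators produces quotients $\Gamma\to Q$ that are again non-elementary relatively hyperbolic, hence infinite. The relators can be chosen to kill any prescribed finite list of targets. Enumerating finite-index normal subgroups and iterating gives a finitely presented quotient $Q_0$ with no finite quotients. Then, for a set $\lambda$ of primes not occurring as orders of torsion in $\Gamma$, I would add relators $w_p^{\,p}$ for $p\in\lambda$, with $w_p$ suitable elements. Small cancellation theory guarantees that the resulting quotient $Q_\lambda$ has elements of order exactly $p$ precisely for $p\in\lambda$ among these primes. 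Since there are infinitely many available primes, subsets $\lambda$ give uncountably many isomorphism types, and finite $\lambda$ give finitely presented $Q_\lambda$.

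\textbf{Vanishing of $H_2(Q_\lambda)$.} The hypothesis $H_2(\Gamma,\Z)=0$ enters here. For $N=\ker(\Gamma\to Q)$, the five-term exact sequence gives
\[
0=H_2(\Gamma,\Z)\to H_2(Q,\Z)\to N/[N,\Gamma]\to H_1(\Gamma,\Z)\to H_1(Q,\Z)\to 0.
\]
Hence $H_2(Q,\Z)$ embeds in $N/[N,\Gamma]$. So it suffices to choose the relators so that their images in $N/[N,\Gamma]$ that lie in $\ker(N/[N,\Gamma]\to H_1(\Gamma))$ vanish. One way would be to choose relators that are consequences of relators lying in $[N,\Gamma]$. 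Alternatively, replace $Q$ by its universal central extension $\widetilde{Q}$. Because $H_2(\Gamma)=0$, the map $\Gamma\to Q$ lifts to $\widetilde Q$, and $\widetilde Q$ is still perfect with no finite quotients, being a central extension of such a group.

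\textbf{Main obstacle.} I expect the hardest step to be controlling $H_2$ and the prime-order torsion simultaneously while keeping $Q_\lambda$ infinite and, in the finitely presented cases, finitely presented. This demands the full strength of relatively hyperbolic small cancellation theory. The universal-central-extension trick is what I would try first.
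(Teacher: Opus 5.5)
The paper quotes this theorem from Bridson--Reid and gives no proof, so there is nothing in the paper to compare against; I am judging your outline on its own. Your version of the Platonov--Tavgen criterion is correct, and so is the surjectivity argument. However, the outline has two genuine gaps and one step that needs correcting.

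\textbf{Gap 1: non-isomorphism of the $P_\lambda$.} The sentence ``the $Q_\lambda$ fall into uncountably many isomorphism classes, so that the resulting $P_\lambda$ are pairwise non-isomorphic'' does not follow. $P_\lambda$ is an abstract group, and you must recover $Q_\lambda$, or some invariant of it, from $P_\lambda$ alone. This is exactly what the hypothesis on centralizers in $G$ is for, and your outline never uses it. Write $N_\lambda=\ker(G\to Q_\lambda)$.
\begin{itemize}
\item If $(g,h)\in P_\lambda$ with $g\neq 1\neq h$, then $C_{P_\lambda}(g,h)\le C_G(g)\times C_G(h)$, which is virtually abelian.
\item The elements with a trivial coordinate are exactly those of $(N_\lambda\times 1)\cup(1\times N_\lambda)$. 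A non-trivial one has centralizer containing $1\times N_\lambda$ or $N_\lambda\times 1$. This is not virtually abelian provided $N_\lambda$ is infinite, i.e.\ provided $Q_\lambda$ is not hyperbolic, which you must arrange.
\end{itemize}
Hence $N_\lambda\times N_\lambda$ is characteristic, and $P_\lambda/(N_\lambda\times N_\lambda)\cong Q_\lambda$ is an isomorphism invariant. In the central-extension case this argument has to be redone modulo the centre; that is where your remark about the centre actually matters.

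\textbf{Gap 2: finitely presented quotients.} Part (2) needs infinitely many pairwise non-isomorphic \emph{finitely presented} $Q_\lambda$. Since $G$ is finitely presented, $P_\lambda$ is finitely generated iff $N_\lambda$ is normally finitely generated iff $Q_\lambda$ is finitely presented. Your $Q_0$ is not finitely presented.
\begin{itemize}
\item Destroying finite quotients along an enumeration of finite-index normal subgroups does not terminate in general, because destroying one epimorphism onto a finite group does not destroy the others. So it adds infinitely many relators.
\item The resulting direct limit is no longer relatively hyperbolic, so you cannot afterwards use small cancellation to add the relators $w_p^{\,p}$.
\end{itemize}
You need a mechanism that rules out finite quotients with finitely many relators. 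For instance, one might seek an Osin-type small cancellation quotient of $\Gamma * B$, with $B$ a fixed finitely presented group without finite quotients, that is generated by the image of $\Gamma$ and normally generated by an embedded copy of $B$. For infinite $\lambda$ you also need a limit argument that keeps the group infinite, keeps each $w_p$ of order exactly $p$, and creates no torsion at the remaining primes.

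\textbf{Correction: the universal central extension step.} $H_2(\Gamma,\Z)=0$ does not make $\Gamma\to Q$ lift to $\widetilde Q$. The obstruction lies in $H^2(\Gamma;H_2(Q,\Z))\cong \Ext(H_1(\Gamma,\Z),H_2(Q,\Z))$, and $\Gamma$ need not be perfect. What is true, and is enough for $G$, is the following. In the pulled-back central extension $E$ one has $[E,E]\cap H_2(Q,\Z)=1$, so the extension splits over $[\Gamma,\Gamma]$. The resulting lift $[\Gamma,\Gamma]\to\widetilde Q$ is onto because $\widetilde Q$ is perfect.

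You must also check that the $\widetilde Q_\lambda$ remain pairwise non-isomorphic, since the central kernel $H_2(Q_\lambda,\Z)$ can contribute $p$-torsion and spoil your torsion invariant. One way is to use $Q_\lambda\cong \widetilde Q_\lambda/Z(\widetilde Q_\lambda)$, which holds when $Z(Q_\lambda)=1$.
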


As observed by Bridson \& Reid \cite[p. 9]{BridsonReidC}, the above theorem becomes particularly pleasant in the setting of $G$ and $\Gamma$ being cocompact lattices in $\PSL_2(\C)$: any such groups will have virtually cyclic centralizer, and are hyperbolic (see \cite[p.9]{BridsonReidC}). Thus we have the following:

\begin{corollary}[Bridson \& Reid \cite{BridsonReidC}]\label{Cor:ThmC-for-PSL2C}
Let $G$ and $\Gamma$ be cocompact lattices in $\PSL_2(\C)$ such that $G = [\Gamma, \Gamma]$. If $H_2(\Gamma, \Z) = 0$, then: 
\begin{enumerate}
\item there exist uncountably many pairwise non-isomorphic groups $P_\lambda$ and embeddings $P_\lambda \hookrightarrow G \times G$ that induce an isomorphism of profinite completions, and
\item infinitely many of these $P_\lambda$ are finitely generated.
\end{enumerate}
\end{corollary}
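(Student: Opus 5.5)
The plan is to apply Theorem~\ref{ThmC-BridsonReid} directly, taking the given $G$ and $\Gamma$ and the identity map $G \to [\Gamma,\Gamma]$. Since $G = [\Gamma,\Gamma]$, the hypothesis ``$G$ maps onto a subgroup of finite index in $[\Gamma,\Gamma]$'' holds trivially, and $H_2(\Gamma,\Z)=0$ is assumed. So the whole task is to check the remaining structural hypotheses for cocompact lattices in $\PSL_2(\C)$.

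For $\Gamma$ I would check the three properties in turn.
\begin{enumerate}
\item \emph{Finitely presented and hyperbolic.} A cocompact lattice acts properly and cocompactly by isometries on $\mathbb{H}^3$. By the \v{S}varc--Milnor lemma it is quasi-isometric to $\mathbb{H}^3$, hence word-hyperbolic and in particular finitely presented.
\item \emph{Non-elementary relatively hyperbolic.} A hyperbolic group is relatively hyperbolic with respect to the empty collection of peripheral subgroups. It is non-elementary because a lattice in $\PSL_2(\C)$ is not virtually cyclic; indeed it contains $F_2$, by a ping-pong argument on two loxodromics with disjoint fixed points.
\item \emph{Bounded torsion.} $\Gamma$ has only finitely many conjugacy classes of finite subgroups, for example because hyperbolic groups do, or because a compact orbifold has finitely many singular strata. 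Hence the orders of torsion elements are bounded. Every finite subgroup of $\PSL_2(\C)$ is cyclic, dihedral, $A_4$, $S_4$ or $A_5$, and the element orders are bounded in any case. So only finitely many primes occur as orders of elements, and infinitely many primes $p$ have no element of order $p$.
\end{enumerate}

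For $G$, which is itself a cocompact lattice, hyperbolicity follows as above. It remains to show that centralizers of non-trivial elements are virtually cyclic. Since $G$ is cocompact, it contains no parabolics, so a non-trivial $g$ is either loxodromic or elliptic.
\begin{itemize}
\item If $g$ is loxodromic, its centralizer preserves the pair of fixed points of $g$ on $\partial\mathbb{H}^3$, and hence lies in the stabilizer of the axis of $g$. This stabilizer is virtually $\R \times SO(2)$. A discrete subgroup of it acting properly on the axis is virtually cyclic.
\item If $g$ is elliptic, its centralizer preserves the fixed axis of $g$ (possibly swapping its ends when $g$ has order $2$). It is again a discrete subgroup of the stabilizer of a geodesic, hence virtually cyclic.
\end{itemize}

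I expect the main obstacle to be the elliptic case, where the centralizer need not preserve the orientation of the axis. Even there the group is a discrete subgroup of $\mathrm{Stab}(\text{geodesic})$, which is an extension of a compact group by $\Isom(\R)$, so it is still virtually cyclic. With all hypotheses of Theorem~\ref{ThmC-BridsonReid} verified, conclusions (1) and (2) follow.
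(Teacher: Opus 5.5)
Your proposal is correct and follows the same route as the paper, which applies Theorem~\ref{ThmC-BridsonReid} with the identity map $G \to [\Gamma,\Gamma]$. The paper only notes, citing Bridson \& Reid, that cocompact lattices in $\PSL_2(\C)$ are hyperbolic with virtually cyclic centralizers; you additionally verify the hypotheses it leaves implicit (finite presentability, non-elementarity, and the condition on primes with no element of order $p$), and these verifications are sound.
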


Thus, given a cocompact lattice $G < \PSL_2(\C)$, if we wish to construct infinitely many Grothendieck pairs $P_\lambda \hookrightarrow G \times G$, a natural strategy becomes as follows: pick some (finite) group $Q < \Isom(\mathbb{H}^3 / G)$ and let $Q$ act on $G$ by outer automorphisms by Mostow--Prasad rigidity, such that in the resulting extension $\Gamma$ we have both (a) $[\Gamma, \Gamma] = G$, and (b) $H_2(\Gamma, \Z) = 0$. In a few sporadic cases, this has been accomplished for certain $3$-orbifold groups $G$, e.g.\ when $G$ is the fundamental group of the Weeks manifold $\mathcal{W}$ (see \S\ref{Sec:Weeks-manifold}). The objective of our article is to provide a range of tools for investigating when the second condition can be guaranteed. We will do this in three different ways: 
\begin{enumerate}
\item We prove that for \textit{any} cocompact hyperbolic $3$-orbifold group $\Gamma$, the groups $H_n(\Gamma, \Z)$ can all be effectively computed in theory; in practice, this algorithm is completely impractical (see \S\ref{Subsec:rewriting-and-3-manifolds}). 
\item We will give a simple but surprisingly useful sufficient criterion for the vanishing of $H_2(\Gamma, \Z)$, when $\Gamma$ is an extension of $G$ by a finite group $Q$, coming only from the action of $Q$ on $G^\ab$ (see \S\ref{Subsec:Easy-criterion}).
\item We will give a practical algorithm for computing $H_2(\Gamma, \Z)$ from the extension data and a finite complete rewriting system for the finite group $Q$ (see \S\ref{Subsec:algorithm-description}). 
\end{enumerate}

The latter of these three points evidently requires a brief incursion into the theory of \textit{complete rewriting systems} for groups, towards which we now turn.

\subsection{Rewriting systems}\label{Subsec:background-rewriting}

One of the most practical ways of computing explicit free resolutions for groups, and in particular also for computing integral homology groups, is by using complete rewriting systems. We give a brief summary of the necessary background, and refer the reader to \cite{Book1993} for a thorough account. 

An \textit{alphabet} $A$ is any finite set of symbols. A \textit{rewriting system} $\cR \subseteq A^\ast \times A^\ast$ is any set of pairs of words; elements $(\ell, r) \in \cR$ are called \textit{rules}, and are sometimes written $(\ell \to r)$. A system $\cR$ induces several relations on $A^\ast$. We will write $u \xr{\cR} v$ if there exist $x, y \in A^\ast$ and some rule $(\ell, r) \in \cR$ such that $u \equiv x\ell y$ and $v \equiv xry$. We say that $u$ is \textit{irreducible} (mod $\cR$) if there is no $v \in A^\ast$ such that $u \xr{\cR} v$. The set of irreducible words mod $\cR$ is denoted $\Irr(\cR)$. Let $\cR_\ell$ denote the set of left-hand sides of rules, i.e.\ $\ell \in A^\ast$ such that there is some $r \in A^\ast$ with $(\ell, r) \in \cR$. The system $\cR$ is \textit{reduced} if for every $(\ell, r) \in \cR$, $\cR_\ell \cap A^\ast \ell A^\ast = \{ \ell \}$ and $r$ is irreducible mod $\cR$. We let $\xra{\cR}$ denote the reflexive and transitive closure of $\xr{\cR}$. We denote by $\lra{\cR}$ the symmetric, reflexive, and transitive closure of $\cR$. The relation $\lra{\cR}$ defines the least congruence on $A^\ast$ containing $\cR$, and the monoid defined by the presentation $\pres{Mon}{A}{\cR}$ is identified with $A^\ast / \lra{\cR}$. The system $\cR$ is \textit{terminating} if there is no infinite chain 
\[
u_0 \xr{\cR} u_1 \xr{\cR} \cdots \xr{\cR} u_k \xr{\cR} \cdots 
\]
of rewritings. Clearly, if $\cR$ is terminating, then every word can be rewritten to an irreducible descendant (this need not be unique, however). If $\cR$ is a finite and terminating rewriting system, and $w \in A^\ast$ is a word, then we define the \textit{disorder} $\delta_\cR(w)$ of $w$ with respect to $\cR$ as the maximum of the lengths of all possible sequences of rewritings $w \xr{\cR} w_1 \xr{\cR} \cdots \xr{\cR} w_n$ (where such a sequence is defined to have length $n$). Similarly, the \textit{stretch} $\sigma_\cR(w)$ of $w$ with respect to $\cR$ is defined as the maximum of the lengths of all the words $w_i$ appearing in any such rewriting sequence. It is easy to see that these functions are well-defined; see e.g.\ \cite[Lemma~2.3]{Hermiller1999} for a proof. Furthermore, if $w \xr{\cR} w'$, then $\delta_\cR(w') < \delta_\cR(w)$ and $\sigma_\cR(w') \leq \sigma_\cR(w)$. 

Two words $v, w \in A^\ast$ are \textit{unifiable} if there exists $z \in A^\ast$ such that $v \xra{\cR} z$ and $w \xra{\cR} z$.  The system $\cR$ is \textit{locally confluent} if for all $u, v, w \in A^\ast$, we have that $u \xr{\cR} v$ and $u \xr{\cR} w$ together imply that $v$ and $w$ are unifiable. The system $\cR$ is \textit{confluent} if, with the same notation, $u \xra{\cR} v$ and $u \xra{\cR} w$ together imply that $v$ and $w$ are unifiable. Finally, $\cR$ is \textit{complete} if it is terminating and confluent. Any terminating and locally confluent system is, by Newman's Lemma \cite{Newman1942}, also confluent, and hence complete. If $\cR$ is complete, then any word rewrites in finitely many steps to a unique irreducible descendant, called its \textit{normal form}.

If $G$ is a group, then we say that $G$ \textit{admits} the rewriting system $\cR \subseteq A^\ast \times A^\ast$ if $G \cong \pres{Mon}{A}{\cR}$. Obviously, any finite group admits a finite complete rewriting system (e.g.\ its multiplication table), and many other classes of groups have been shown to admit such systems, including surface groups \cite{Chenadec1986, Hermiller1994}, some Coxeter groups \cite{Hermiller1994}, right-angled Artin groups \cite{Hermiller1995}, and some $3$-manifold groups (see \S\ref{Subsec:rewriting-and-3-manifolds}). If $G$ admits a finite complete rewriting system, then clearly $G$ has decidable word problem; to check if a word is equal to the identity element, it suffices to rewrite it to its normal form, which is the empty word if and only if the word is equal to $1$. On the other hand, as proved by Squier \cite{Squier1987}, there exists a finitely presented group $G$ with decidable word problem but such that $G$ admits no finite complete rewriting system. The proof of this uses homological methods, on which we will expand in \S\ref{Subsec:Kobayashi-back} and which play a key role in this paper. 

\subsection{Some homological algebra}\label{Subsec:homological-algebra-back}

Let $M$ be a monoid, and let $\Z M$ denote the \textit{monoid ring} of $M$, i.e.\ the ring of finitely supported linear combinations of elements of $M$. If $Q$ is a right $\Z M$-module, then we denote the action of $m \in M$ on $q \in Q$ by $q \circ m$, where the $\circ$ notation will often be useful in the sequel. We will often use square bracket notation to group together monoid ring elements acting on an element, and e.g.\ write $q \circ [m_1 - 2m_2]$ for the action of $m_1 - 2m_2 \in \Z M$ on $q \in Q$. Throughout this article, we will only in practice deal with the homology of groups, but we will phrase it in terms of monoid homology. A monoid $M$ is said to be (right-)$\FP_n$ if the trivial right $\Z M$-module $\Z$ admits some projective resolution $P_\bullet \twoheadrightarrow \Z \to 0$ such that $P_0, \dots, P_n$ are all finitely generated. If $M$ is $\FP_n$ for all $n \geq 0$, then we say that $M$ is $\FP_\infty$. Any finitely generated monoid is $\FP_1$, and any finitely presented monoid is $\FP_2$. If $M$ is a group, then being $\FP_1$ is equivalent to being finitely generated, but this fails in general when $M$ is not a group; see \cite{Kobayashi2007}.

For a fixed left $M$-module $Q$, we define the homology of $M$ with coefficients in $Q$ by $H_n(M, Q) = \Tor_n^{\Z M}(\Z, Q)$. That is, we will resolve $\Z$ as a trivial \textit{right} $\Z M$-module.\footnote{For general monoids $M$, the left-right issue is a serious one, cf.\ e.g.\ work by Guba \cite{Guba1998}, but when $M$ is a group, left and right homology are identical.} When $M$ is a group, there is no distinction between homology defined in this manner and of that defined, say, via classifying spaces. For a primer on homological algebra, the reader is referred to \cite{Brown1982, Rotman1979}.

\subsection{The Kobayashi resolution}\label{Subsec:Kobayashi-back}

We will now show how to compute the homology of a group using complete rewriting systems. We will phrase everything in terms of monoids, as this is the natural setting of rewriting systems. Fix a monoid $M$ generated by a finite set $A$, and let $\Z M$ be the monoid ring of $M$. Let $\cR \subseteq A^\ast \times A^\ast$ be a complete and reduced rewriting system defining $M$. From a complete rewriting system $\mathcal{R}$ defining a monoid $M$, Kobayashi \cite{Kobayashi1990} constructed a free (right) $\Z M$-resolution $(P_\bullet, \partial_\bullet)$ of $\Z$ with the property that if $\mathcal{R}$ is finite, then every $P_i$ is finitely generated; thereby proving the remarkable result that any monoid admitting a finite complete rewriting system is $\FP_\infty$ (a result also obtained by Anick \cite{Anick1986} and Groves \cite{Groves1990}). We will follow the notation of Guba \& Pride \cite{Guba1996} for this resolution.  Let $\Irr(\cR)$ be the set of all $\cR$-irreducible words -- we will simply call these irreducible -- and let $\Irr^+(\cR) = \Irr(\cR) \setminus \{ 1 \}$. The normal form, i.e.\ the unique irreducible descendant, of a word $w \in A^\ast$ modulo $\cR$ will in this section be denoted by $\overline{w}$. Since $\cR$ is complete, we identify $\Irr(\cR)$ with $M$, and speak of the elements of $M$ as irreducible words. Let $\cE$ be the set of all pairs of words $(u, v)$ with $u, v \in \Irr^+(\cR)$, such that $uv$ is \textbf{not} irreducible but all of its proper prefixes are irreducible. For $n \geq 1$, let $V^{(n)}$ be the set of all sequences of the form $(v_1, v_2, \dots, v_n)$ such that $v_1 \in A$, and for all $1 \leq i < n$ we have $(v_i, v_{i+1}) \in \cE$. Let $P_n$ be the free right $\ZM$-module on the basis $V^{(n)}$. Setting $P_{-1} = \Z$ and $P_0 = \ZM$, we will occasionally consider $P_0$ as the free right $\ZM$-module with basis a formal symbol $\square$. We remark that if $\cR$ is finite then each $P_i$ is clearly finitely generated. 

As a running example, let $A_1 = \{ x, y \}$ and let $\cR_1 = \{ x^2 \to 1, y^2 \to 1, yx \to xy \}$. Then $\cR_1$ is complete, and defines the Klein four-group. We then have 
\begin{align*}
V^{(1)} &= \{ (x), (y) \}, \\
V^{(2)} &= \{ (x,x), \: (y,y), \: (y,x) \}, \\
V^{(3)} &= \{ (x,x,x), (y,y,y), (y,y,x), (y,x,x) \}, \quad \dots
\end{align*}
and in general $|V^{(n)}| = n + 1$, and thus $P_n$, in this example, is generated as a free $\Z M$-module by $n+1$ elements. 

We now define chain maps $\partial_\bullet$ such that $(P_\bullet, \partial_\bullet)$ is a free resolution. In some contexts in this article we will denote the maps by $\partial_\bullet^\cR$ to clarify which rewriting system they originate with, but throughout this section we will maintain the notation without a superscript. Since we will primarily be using the values of $\partial_n$ on $n$-tuples, we will often suppress the additional redundant parentheses, i.e.\ instead of e.g.\ $\partial_3((x, y, z))$ we will simply write $\partial_3(x,y,z)$. We construct the chain maps $\partial_\bullet$ inductively  alongside $\Z$-homomorphisms $i_\bullet$. These maps will yield a chain homotopy, i.e.\ $\partial i  + i \partial = \operatorname{id}$, proving that $(P_\bullet, \partial_\bullet)$ is indeed a resolution. We set $\partial_0 = \varepsilon$, the augmentation map, and let $i_0 \colon \Z \to \ZM$ be defined by $i_0(1) = 1$. Let $\partial_1 \colon P_1 \to \ZM$ be defined by setting $\partial_1(a) = a-1$ for $(a) \in V^{(1)}$, and extend $\ZM$-linearly. We define $i_1 \colon \ZM \to P_1$ as follows. Let $x \in \Irr(\cR)$, and write $x = x_1 x_2 \cdots x_k$ with $x_i \in A$. Then we set $i_1(x) = \sum_{i=1}^k (x_i) \circ x_{i+1} \cdots x_k$ where the notation $\circ$ is used to indicate the right action of $\ZM$ on $P_1$ (and, more generally, $P_i$). In particular, we set $i_1(1) = 0$. For the reader familiar with free differential calculus (see e.g.\ \cite[Chapter V]{MKS}), we may note that $i_1(w)$ is essentially a sum over all Fox derivatives of $w$. 

We begin the inductive step of the remaining $i_\bullet$ and $\partial_\bullet$. For $n \geq 1$, we set
\begin{equation}\label{Eq:partial_n-definition}
\partial_{n+1}(v_1, \dots, v_n, v_{n+1}) = (v_1, \dots, v_n) \circ v_{n+1} - i_n \left( \partial_n (v_1, \dots, v_n) \circ v_{n+1} \right).
\end{equation}
To define $i_{n+1}$, notice that since $P_n$ is free as a $\Z$-module on the set of elements $(v_1, \dots, v_n) \circ x$, where $(v_1, \dots, v_n) \in V^{(n)}$ and $x \in \Irr(\cR)$, we only need to define $i_{n+1}$ on this set. Fix such an element. If $v_nx$ is irreducible, then we set $i_{n+1}( (v_1, \dots, v_n) \circ x) = 0$. Otherwise, there is some minimal prefix $v_n v_{n+1}$ of $v_n x$ which is reducible, where $v_{n+1}$ is non-empty; write $x = v_{n+1} y$. Then $(v_n, v_{n+1}) \in \cE$, and by \cite[p.~266]{Kobayashi1990} the element $i_n \left( \partial_n( v_1, \dots, v_n) \circ v_{n+1} \right)$ can be written as a sum of basis elements of the form $(w_1, \dots, w_n) \circ z$, for all of which $w_1 \cdots w_n z$ is shorter than $v_1 \cdots v_n v_{n+1}$. In particular, $i_{n+1}$ is defined on such basis elements by induction (on word length), and hence also on $i_n \left( \partial_n ( (v_1, \dots, v_n) \circ v_{n+1} )  \circ y \right)$. We then define 
\begin{equation}\label{Eq:i-n-definition}
i_{n+1}( (v_1, \dots, v_n) \circ x) = (v_1, \dots, v_n, v_{n+1}) \circ y + i_{n+1} \left( i_n \left( \partial_{n}(v_1, \dots, v_n) \circ v_{n+1} \right) \circ y \right)
\end{equation}
which completes the inductive definition of $i_\bullet$ and $\partial_\bullet$. It can be shown that $i_\bullet$ is a chain homotopy, and hence the Kobayashi resolution is acyclic. 

\

We continue our example of the Klein four-group from earlier. We have e.g.\ that 
\begin{align*}
\partial_2( y, x) &= (y) \circ x - i_1 \partial_1 \left( (y) \circ x \right) 
= (y) \circ x - i_1 \left( (y-1)x \right) \\ 
&= (y) \circ x - i_1 \left( \overline{yx} - x \right) = (y) \circ x - i_1 \left( xy - x \right)  \\ 
&= (y) \circ x - (x)\circ y - (y) \circ 1 + (x) = (y)\circ [x - 1] - (x) \circ [y - 1] \in P_1,
\end{align*}
since $\overline{yx} \equiv xy$, i.e.\ the normal form of $yx$ is $xy$. Note that upon tensoring $(P_\bullet, \partial_\bullet)$ with $\Z$ (as a \textit{left} $\Z M$-module), we thus see that $\widetilde{\partial}_2(y, x) = 0$, where $\widetilde{\partial}_2 = \partial_2 \otimes_{\Z M} \operatorname{id}_\Z$. In particular, the element $(y,x)$ is a generator of $H_2(M, \Z)$, where $M$ is our Klein four-group. One can easily compute $\partial_2(x,x) = (x) \circ [x+1]$ and $\partial_2(y,y) = (y) \circ [y+1]$, and hence that $\widetilde{\partial}_2(x,x) = 2(x)$ and $\widetilde{\partial}_2(y,y) = 2(y)$. Thus neither of them contribute to the kernel of $\widetilde{\partial}_2$, and in particular $H_2(M, \Z)$ is generated only by $(y,x)$. We can also compute that
\begin{align*}
\partial_3(y,y,x) &= (y,y) \circ [x-1] - (y,x) \circ [y + 1], \quad \text{and} \\
\partial_3(y,x,x) &= (x,x)\circ [y -1] + (y,x) \circ [x + 1]
\end{align*}
and hence $-\widetilde{\partial}_3(y,y,x) = \widetilde{\partial}_3(y,x,x) = 2(y,x)$. Thus $(y,x)$ has order at most $2$ in $H_2(M, \Z)$. One can now either compute the images of the remaining two basis elements of $V^{(3)}$ under $\partial_3$, or use Lemma~\ref{Lem:Kobayashi-c} to conclude that no other element in $V^{(3)}$ can map to $(y,x)$, to find that $(y,x)$ has order $2$ in $H_2(M, \Z)$. Thus we have found that $H_2(M, \Z) \cong \Z / 2\Z$ when $M$ is the Klein four-group, which may seem a modest achievement; however, the above maps are sufficiently easy to define that an inductive formula for the values of $\partial_{2n}$ and $\partial_{2n+1}$ is easy to write down for all $n$, and in particular it is easy to recover the usual formula for $H_n(M, \Z)$. This is left as an exercise.

\subsection{Properties of the Kobayashi resolution}\label{Subsec:props-of-Kobayashi}

We retain the notation from the previous subsection \S\ref{Subsec:Kobayashi-back}. If the complete rewriting system $\cR$ with which we begin is finite, then all basis elements are effectively computable, and in every dimension there are, of course, only finitely many, i.e.\ $V^{(n)}$ is finite for all $n \geq 0$. Furthermore, all chain maps are also clearly effectively (though not always \textit{efficiently}; the runtime can be exponential in the number of rules) computable. In particular, we conclude the following classical result which was, as mentioned before, independently discovered by a number of researchers.

\begin{theorem}[Anick \cite{Anick1986}, Groves \cite{Groves1990}, Kobayashi \cite{Kobayashi1990}]\label{Thm:AnickGrovesKobayashi}
Let $M$ be a monoid admitting a finite complete rewriting system. Then $M$ is $\FP_\infty$. In particular, there is an algorithm which takes as input $n \geq 0$ and outputs the isomorphism type of the finitely generated abelian group $H_n(M, \Z)$. 
\end{theorem}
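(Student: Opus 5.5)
The plan is to take the Kobayashi resolution $(P_\bullet, \partial_\bullet)$ of \S\ref{Subsec:Kobayashi-back}, built from the given finite complete (and, without loss of generality, reduced) rewriting system $\cR$ on the finite alphabet $A$, and show that everything needed to compute $H_n(M,\Z)$ is effectively computable and finite. First I would reduce to a reduced system. Given a finite complete $\cR$, I discard any rule whose left-hand side properly contains another left-hand side, and replace each right-hand side by its normal form. Both operations are effective, since rewriting to normal form terminates. They preserve the congruence $\lra{\cR}$ and completeness. This puts us in the setting assumed in \S\ref{Subsec:Kobayashi-back}.

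Next I would check finiteness of the bases $V^{(n)}$. For $(u,v)\in\cE$, the word $uv$ is reducible while all its proper prefixes are irreducible, and $u$ itself is irreducible. So the reducing occurrence of some left-hand side $\ell$ must end at the last letter of $uv$ and must overlap $v$. Also $v$ is irreducible, so $\ell$ cannot lie inside $v$, and hence it must start inside $u$. It follows that $|v| < \max_{\ell\in\cR_\ell}|\ell|$. Consequently each $v_{i+1}$ in a tuple of $V^{(n)}$ ranges over a finite, explicitly enumerable set of irreducible words. By induction on $n$, starting from $v_1\in A$, each $V^{(n)}$ is finite and computable by brute-force enumeration. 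So each $P_n$ is finitely generated free, and $M$ is $\FP_\infty$ once we know $(P_\bullet,\partial_\bullet)$ is a resolution.

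For acyclicity, I would verify that the maps $i_\bullet$ of \eqref{Eq:i-n-definition} are well defined, by induction on the length of $w_1\cdots w_nz$ as cited from \cite[p.~266]{Kobayashi1990}. I would then verify the contracting-homotopy identity $\partial_{n+1} i_{n+1} + i_n\partial_n = \operatorname{id}$ on the $\Z$-basis $(v_1,\dots,v_n)\circ x$, again by induction on length. This inductive verification, in particular controlling the termination of the recursive definition of $i_{n+1}$, is the main technical obstacle. Here I would lean on Kobayashi's argument rather than redo it, using the measure $|w_1\cdots w_n z|$, with ties broken by the disorder $\delta_\cR$.

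Finally, for the algorithm, I would compute $\partial_{n+1}$ and $\partial_n$ on the finite bases $V^{(n+1)}$ and $V^{(n)}$ via \eqref{Eq:partial_n-definition}. Each value is a finite $\ZM$-combination obtained by finitely many normal-form computations. I would then tensor with the trivial module $\Z$, sending every $m\in M$ to $1$, to get integer matrices $\widetilde\partial_{n+1}$ and $\widetilde\partial_n$. The Smith normal form then gives $\ker\widetilde\partial_n/\operatorname{im}\widetilde\partial_{n+1}\cong H_n(M,\Z)$ as an explicit finitely generated abelian group.
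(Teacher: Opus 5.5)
Your proposal follows the paper's route. The paper deduces the theorem directly from the Kobayashi resolution: each $V^{(n)}$ is finite and effectively computable, the maps $\partial_n$ are effectively computable, and acyclicity comes from the contracting homotopy $i_\bullet$ (citing Kobayashi). Your reduction to a reduced system, the bound $|v| < \max_{\ell}|\ell|$ for $(u,v)\in\cE$, and the Smith normal form step fill in details the paper leaves implicit.

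One correction is needed in the step you flag as the main obstacle. The recursion defining $i_{n+1}$ is well founded for the rewriting order, not for word length. For example, you can induct on $\delta_\cR(w_1\cdots w_n z)$ alone: every term produced by the recursion is a proper $\cR$-descendant of $v_1\cdots v_n x$, so the disorder strictly drops. A measure with $|w_1\cdots w_n z|$ as the primary key fails in general, because complete systems can contain length-increasing rules such as $xb \to abx$ in $\cR_G$.
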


One of the key properties of the Kobayashi resolution that we will make central use of in this article is the following, which can be found on \cite[p. 266]{Kobayashi1990}. To state it, we first need a particular well-founded relation $\preceq$ on $\bigcup_{n=0}^\infty P_n$, which is defined by Kobayashi as follows. For $X = (u_1, \dots, u_m) \times \{ x \} \in V^{(m)} \times \Irr(\cR)$ and $Y = (v_1, \dots, v_n) \times \{ y \}\in V^{(n)} \times \Irr(\cR)$, we write $X \succeq Y$ if $u_1 \cdots u_m x \xra{\cR} v_1 \cdots v_n y$, and $X \succ Y$ if $X \succeq Y$ but $u_1 \cdots u_m x \not\equiv v_1 \cdots v_n y$. This induces an ordering, denoted with the same symbol, on elements of the form $\nu \circ \alpha$ where $\nu \in V^{(n)}$ and $\alpha \in \Irr(\cR)$, which we thus extend to all of $\bigcup_{n=0}^\infty P_n$ as follows: given two elements, written as finite sums, $W_1 = \sum_i \mathbf{u}_i \circ (z_i \alpha_i) \in P_m$ and $W_2 = \sum_j \mathbf{v}_j \circ (z'_j \beta_j) \in P_n$, where $\nu_i \in P_m$, $\mu_j \in P_n$, $\alpha_i, \beta_j \in \Irr(\cR)$, and $z_i, z_j' \in \Z$ non-zero for all $i, j$, we set $W_1 \succ W_2$ if for every term $\nu_i \circ z_i \alpha_i$ in $W_1$, there exists some term $\mu_j \circ z_j' \beta_j$ in $W_2$ such that $\nu_i \circ \alpha_i \succ \mu_j \circ \beta_j$. The key proposition can now be stated in the following form:

\begin{lemma}[{Kobayashi \cite[p. 266]{Kobayashi1990}}]\label{Lem:Kobayashi-c}
For all $n \geq 0$, all $\mathbf{v} = (v_1, \dots, v_n) \in V^{(n)}$, and all $x \in \Irr(\cR)$, we have:
\begin{enumerate}
\item $\partial_n( \mathbf{v} \circ x) \preceq \mathbf{v} \circ x$ and $i_{n+1} (\mathbf{v} \circ x) \preceq \mathbf{v} \circ x$. 
\item If $\mathbf{v}_n x \in \Irr(\cR)$, then $i_n \partial_n (\mathbf{v} \circ x) = \mathbf{v} \circ x$. 
\item If $\mathbf{v}_n x \not\in \Irr(\cR)$, then $i_n \partial_n (\mathbf{v} \circ x) \prec \mathbf{v} \circ x$. 
\end{enumerate}
\end{lemma}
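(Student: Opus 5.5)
I will prove the three parts together by a single induction on $n$, and inside each $n$ by Noetherian induction on $\mathbf{v}\circ x$ with respect to $\preceq$. This is legitimate because $\cR$ is terminating, so $\succ$ is well-founded on pairs $V^{(n)}\times\Irr(\cR)$. Two preliminary facts will be used throughout.

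First, $\preceq$ is transitive. It is also compatible with right multiplication: if $\mathbf{u}\circ y\preceq\mathbf{v}\circ x$, then $\mathbf{u}\circ\overline{yz}\preceq\mathbf{v}\circ\overline{xz}$. This holds because $u_1\cdots u_m y z\xra{\cR} u_1\cdots u_m\overline{yz}$, and rewriting is closed under taking right contexts.

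Second, for any word $w$, the element $i_1(\overline{w})$ is a sum of terms $(x_i)\circ x_{i+1}\cdots x_k$. Each of these concatenates to exactly $\overline{w}$, so each is $\preceq$ anything concatenating to a word that rewrites to $\overline{w}$.

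The base cases $n=0,1$ are direct computations.
\begin{itemize}
\item For $\partial_1((a)\circ x)=(a)\circ x$ gives the term $\overline{ax}$, which is reachable from $ax$; the other term is $x$.
\item The statement $i_1\partial_1((a)\circ x)=(a)\circ x$ holds when $ax$ is irreducible. This follows because $i_1(ax)-i_1(x)$ telescopes to $(a)\circ x$.
\item When $ax$ is reducible, $i_1(\overline{ax})$ consists of terms whose concatenation is $\overline{ax}\neq ax$, so it is strictly smaller. The term $i_1(x)$ is strictly smaller by length.
\end{itemize}

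For the inductive step, take $\mathbf{v}=(v_1,\dots,v_{n+1})$. By definition \eqref{Eq:partial_n-definition}, $\partial_{n+1}(\mathbf{v})=(v_1,\dots,v_n)\circ v_{n+1}-i_n(\partial_n(v_1,\dots,v_n)\circ v_{n+1})$. Since $v_nv_{n+1}$ is reducible, the induction hypothesis (3) in degree $n$ makes the second summand $\prec (v_1,\dots,v_n)\circ v_{n+1}$. This element has the same concatenation as $\mathbf{v}$. Multiplying on the right by $x$ and using compatibility gives $\partial_{n+1}(\mathbf{v}\circ x)\preceq\mathbf{v}\circ x$.

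For $i_{n+1}$, I will follow the recursive definition \eqref{Eq:i-n-definition}. The leading term $(v_1,\dots,v_{n+1})\circ y$ has the same concatenation as the input. The recursive term is applied to elements that are strictly smaller by what was just shown, so the Noetherian induction hypothesis applies to them, and transitivity closes the argument.

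Parts (2) and (3) in degree $n+1$ come from expanding $i_{n+1}\partial_{n+1}(\mathbf{v}\circ x)$. The first piece is $i_{n+1}((v_1,\dots,v_n)\circ\overline{v_{n+1}x})$. When $v_{n+1}x$ is irreducible, its minimal reducible prefix is exactly $v_nv_{n+1}$ with remainder $x$, so \eqref{Eq:i-n-definition} returns $\mathbf{v}\circ x$ plus a correction term. The second piece is $-i_{n+1}(i_n(\partial_n(v_1,\dots,v_n)\circ v_{n+1})\circ x)$. I must show that the correction term cancels this second piece exactly.

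This cancellation is the main obstacle, since it requires $i$ to be additive and to commute appropriately with right multiplication. I will first try to establish $\partial i+i\partial=\mathrm{id}$ simultaneously within the same induction, from which (2) follows by evaluating on elements where $\partial_{n+1}$ of the $i$-part contributes trivially.

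In the reducible case (3), the first piece is built from concatenations equal to $v_1\cdots v_n\overline{v_{n+1}x}$, which is strictly smaller than $\mathbf{v}\circ x$. The second piece is strictly smaller by (3) in degree $n$ and compatibility. This yields the strict inequality.
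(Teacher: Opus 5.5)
The paper gives no proof of this lemma; it cites Kobayashi. Your architecture is the natural one: induction on $n$, Noetherian induction inside each degree, and (3) in degree $n$ feeding both (1) for $\partial_{n+1}$ and the recursion defining $i_{n+1}$. But part (2) is not actually proved. You defer the cancellation you call the main obstacle to a plan that is circular. On $\mathbf{v}\circ x$ with $v_{n+1}x$ irreducible one has $i_{n+2}(\mathbf{v}\circ x)=0$, so the identity $\partial i+i\partial=\mathrm{id}$ evaluated there \emph{is} statement (2).

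The cancellation is in fact immediate. Since $\partial_{n+1}$ is $\Z M$-linear,
\[
\partial_{n+1}(\mathbf{v}\circ x)=(v_1,\dots,v_n)\circ v_{n+1}x-i_n\bigl(\partial_n(v_1,\dots,v_n)\circ v_{n+1}\bigr)\circ x .
\]
When $v_{n+1}x\in\Irr(\cR)$, the minimal reducible prefix of $v_nv_{n+1}x$ is $v_nv_{n+1}$ (because $(v_n,v_{n+1})\in\cE$), with remainder $x$. So \eqref{Eq:i-n-definition} gives
\[
i_{n+1}\bigl((v_1,\dots,v_n)\circ v_{n+1}x\bigr)=\mathbf{v}\circ x+i_{n+1}\Bigl(i_n\bigl(\partial_n(v_1,\dots,v_n)\circ v_{n+1}\bigr)\circ x\Bigr).
\]
The correction term is literally $i_{n+1}$ of the second piece, so $\Z$-linearity of $i_{n+1}$ alone finishes. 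No commutation of $i$ with right multiplication is needed, which is fortunate since that fails in general.

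The comparison steps also need repair.
\begin{itemize}
\item With the order exactly as defined in \S\ref{Subsec:props-of-Kobayashi} (the concatenation of $X$ rewrites to that of $Y$), part (1) is already false in low degree. In the Klein four-group example of \S\ref{Subsec:Kobayashi-back}, $\partial_2(y,x)$ contains $(x)\circ 1$, yet $yx\to xy$ and $xy$ is irreducible.
\item Your ``strictly smaller by length'' silently changes the order, and length cannot simply be adjoined to $\to^+$, because rules may lengthen words. The rule $xb\to abx$ of \S\ref{Subsec:computations-for-Weeks}, together with $|xb|<|abx|$, would give a cycle.
\item A workable order is: $X\succeq Y$ iff the concatenation of $X$ rewrites to a word having the concatenation of $Y$ as a suffix. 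This is transitive, and its strict part is well-founded: a descending chain yields a single rewriting sequence, which has finitely many steps, after which only nonempty prefixes are stripped.
\item Your compatibility claim is false as stated. In the same example $(x)\circ y\prec(y)\circ x$, but right-multiplying by the letter $x$ gives $(x)\circ xy$ and $(y)\circ 1$, and the irreducible word $y$ rewrites to nothing ending in $xxy$.
\item What your justification really shows is this: if a word $W$ rewrites to a word ending in $u_1\cdots u_m y$, then $Wz$ rewrites to a word ending in $u_1\cdots u_m\overline{yz}$. Apply it with $W=v_1\cdots v_{n+1}$ and $z=x$, comparing directly with the concatenation $v_1\cdots v_{n+1}x$ of $\mathbf{v}\circ x$ rather than passing through $(v_1,\dots,v_n)\circ\overline{v_{n+1}x}$.
\end{itemize}
With these changes, your arguments for (1) and (3) go through.
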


We will in \S\ref{Subsec:rewriting-for-complete-extensions} put a filtration on $V^{(1)}, V^{(2)}$, and $V^{(3)}$ for a certain type of rewriting system, which will extend to a filtration on $P_1, P_2$, and $P_3$, and Lemma~\ref{Lem:Kobayashi-c} will be instrumental in ensuring that this filtration is preserved by the chain map $\partial$. In this way, we will obtain a computable spectral sequence for the homology of certain group extensions, being the core part of our computation of $H_2(\Gamma, \Z)$ for lattices $\Gamma < \Isom(\mathbb{H}^3)$. Before passing to this, however, we first make a remark about how rewriting systems allow us to \textit{in principle} compute the homology groups of any $3$-orbifold group. 

\subsection{Rewriting systems for $3$-manifold groups}\label{Subsec:rewriting-and-3-manifolds}

Let $M$ be a closed $3$-manifold bearing one of Thurston's eight geometries (for more details, see \cite{Morgan2014}). Suppose further that if $M$ is hyperbolic, then $M$ virtually fibres over $S^1$. In 1999, Hermiller \& Shapiro \cite{Hermiller1999b} proved that under these assumptions, the group $\pi_1(M)$ admits a finite complete rewriting system. Today, we know by work of Agol \cite{Agol2013} and Wise \cite{Wise2021} on the Virtual Haken Theorem that all hyperbolic $3$-manifolds virtually fibre. In particular, the aforementioned theorem can now be stated in the following form.

\begin{theorem}[{Hermiller \& Shapiro \cite{Hermiller1999b}}\footnote{The reader is advised to note that the proof by Hermiller \& Shapiro of this theorem relies on the fact that an extension of a group admitting a finite complete rewriting system by another such group also admits a finite complete rewriting system, claimed in \cite{Hermiller1999}. As we shall discuss in \S\ref{Subsec:rewriting-for-complete-extensions}, the proof in \cite{Hermiller1999} is incomplete, although Hermiller has provided us with a fix, which we present in the same section. Furthermore, the proof of Theorem~\ref{Thm:Hermiller1999b} only relies on the claim in the case of a split extension, which \textit{is} covered by the proof in \cite{Hermiller1999}.}]\label{Thm:Hermiller1999b}
Let $M$ be a closed geometric $3$-manifold. Then $\pi_1(M)$ admits a finite complete rewriting system. 
\end{theorem}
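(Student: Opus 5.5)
The plan is to reduce the statement to Hermiller \& Shapiro's original theorem by showing that its hypotheses are now always satisfied. By the Geometrization Theorem (Perelman), a closed geometric $3$-manifold $M$ carries one of Thurston's eight geometries. I would split into two cases: $M$ hyperbolic, and $M$ modelled on one of the other seven geometries.

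In the non-hyperbolic case, the original result of Hermiller \& Shapiro \cite{Hermiller1999b} applies directly. Their proof for these geometries treats Seifert fibred spaces and Sol manifolds. It builds complete rewriting systems for the relevant surface-group, free-abelian and virtually polycyclic pieces, and then passes to extensions and finite-index overgroups. Nothing new is needed there beyond checking, as the footnote indicates, that only the split-extension case of the Hermiller--Meier extension theorem is used.

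In the hyperbolic case, the only extra hypothesis in \cite{Hermiller1999b} is that $M$ virtually fibres over $S^1$, and this is supplied by Agol's Virtual Fibring Theorem \cite{Agol2013}, building on Wise \cite{Wise2021}. Concretely, there is a finite cover $\widetilde M \to M$ that fibres over $S^1$ with fibre a closed surface $S$. Hence $\pi_1(\widetilde M) \cong \pi_1(S) \rtimes \Z$. Surface groups admit finite complete rewriting systems \cite{Chenadec1986, Hermiller1994}, and so does $\Z$, so the split-extension case gives one for $\pi_1(\widetilde M)$.

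The step I expect to be the main obstacle is passing from the finite-index subgroup $\pi_1(\widetilde M)$ back up to $\pi_1(M)$. For this I would follow Hermiller \& Shapiro: first replace $\widetilde M$ by a regular cover, which is still fibred since it covers a fibred manifold and one can take the core. Then write $\pi_1(M)$ as an extension of the normal subgroup $\pi_1(\widetilde M)$ by a finite group. Finite groups admit finite complete rewriting systems, but this extension need not be split. That is exactly where care is needed given the gap in \cite{Hermiller1999}. I would therefore either appeal to the corrected extension result presented in \S\ref{Subsec:rewriting-for-complete-extensions}, or follow the paper's footnote and arrange the argument, as Hermiller \& Shapiro do, so that only split extensions arise, for instance by using the fibred structure to express $\pi_1(M)$ directly. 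Combining the two cases yields a finite complete rewriting system for $\pi_1(M)$ for every closed geometric $M$.
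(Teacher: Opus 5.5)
Your proposal is correct and is essentially what the paper does: the paper quotes Hermiller \& Shapiro and observes that their only extra hypothesis in the hyperbolic case, virtual fibring, is now supplied by Agol and Wise. (Geometrization is not needed here, since $M$ is assumed geometric.) One caution concerns your two options for passing from the fibred regular cover back to $\pi_1(M)$: only the first is viable, because $\pi_1(M)$ is torsion-free and so can never be a split extension of $\pi_1(\widetilde M)$ by a nontrivial finite group, and a fibration of $M$ itself need not exist (e.g.\ when $b_1(M)=0$), so this step genuinely needs the corrected non-split extension result, Theorem~\ref{Thm:Hermiller-Meier-G-FCRS}, which is also the finite-extension step used in the proof of Corollary~\ref{Cor:algo-exists-technically}.
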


Thus, by Theorem~\ref{Thm:AnickGrovesKobayashi} we have the following immediate corollary as a special case.

\begin{corollary}\label{Cor:algo-exists-technically}
Let $\Gamma$ be a cocompact hyperbolic $3$-orbifold group. Then there is an algorithm for computing $H_n(\Gamma, \Z)$ for every $n \geq 1$. 
\end{corollary}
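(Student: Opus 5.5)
Since $\Gamma$ is a cocompact hyperbolic $3$-orbifold group, the plan is to pass to a torsion-free finite-index subgroup, apply Theorem~\ref{Thm:Hermiller1999b} to it, and lift the resulting finite complete rewriting system to $\Gamma$ through a finite extension, so that Theorem~\ref{Thm:AnickGrovesKobayashi} applies to $\Gamma$ itself.

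\emph{Step 1: a torsion-free normal subgroup.} $\Gamma$ is a finitely generated linear group, sitting inside $\PSL_2(\C)$. By Selberg's Lemma it contains a torsion-free subgroup of finite index. Intersecting its conjugates gives a torsion-free normal subgroup $N \trianglelefteq \Gamma$ with $Q = \Gamma/N$ finite. Since $N$ is torsion-free and cocompact, $\mathbb{H}^3/N$ is a closed hyperbolic $3$-manifold. Theorem~\ref{Thm:Hermiller1999b}, which uses Agol--Wise, therefore gives $N$ a finite complete rewriting system. The finite group $Q$ trivially admits one, namely its multiplication table.

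\emph{Step 2: extensions.} Next I would invoke the fact that an extension of a group admitting a finite complete rewriting system by another such group again admits one. This is the Hermiller--Meier theorem, in the corrected form to be presented in \S\ref{Subsec:rewriting-for-complete-extensions}. Concretely:
\begin{itemize}
\item take the generators of $N$ together with letters for the non-identity elements of $Q$, viewed via a section $s$;
\item keep the rules of $N$;
\item add rules $q\,a \to w_{q,a}\,q$ that move $Q$-letters past $N$-letters to the right, with $w_{q,a}$ the normal form of $s(q)\,a\,s(q)^{-1}$;
\item add rules $q\,q' \to c_{q,q'}\,(qq')$ for the cocycle.
\end{itemize}

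This step is the main obstacle. Termination is delicate, because conjugation can lengthen $N$-words; local confluence must also be checked on overlaps. This is exactly where the gap in \cite{Hermiller1999} lies, so I would rely on the fixed version rather than reprove it.

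One could try to avoid the non-split case. The footnote to Theorem~\ref{Thm:Hermiller1999b} only covers split extensions, and $\Gamma$ need not split over $N$, so the general corrected statement is genuinely needed. Alternatively, one could argue that being $\FP_\infty$ with an effectively computable resolution passes to finite extensions. However, the rewriting route is the one the paper sets up.

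\emph{Step 3: conclude.} With a finite complete rewriting system for $\Gamma$ in hand, Theorem~\ref{Thm:AnickGrovesKobayashi} yields an algorithm computing $H_n(\Gamma,\Z)$ for every $n$. The only remaining points are effectiveness: $N$, $Q$, the section and the conjugation data must be found from a presentation of $\Gamma$. For existence of an algorithm, it suffices that all this finite data exists and can be hard-coded for the given $\Gamma$.
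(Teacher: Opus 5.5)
Your proposal is correct and follows the paper's proof: Selberg's Lemma and the normal core give a torsion-free normal subgroup, Theorem~\ref{Thm:Hermiller1999b} gives it a finite complete rewriting system, this system lifts to the finite extension $\Gamma$, and Theorem~\ref{Thm:AnickGrovesKobayashi} then computes the homology. The only difference is that you use the corrected Theorem~\ref{Thm:Hermiller-Meier-G-FCRS} for the possibly non-split finite-extension step, whereas the paper cites Groves--Smith \cite[p.~285]{Groves1993}; since the paper itself notes that the earlier proofs in the literature were incomplete, your citation is the more careful one.
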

\begin{proof}
By Selberg's Lemma, we have that there exists some torsion-free finite index subgroup $\Gamma_0 \leq \Gamma$. By taking the normal core, we may without loss of generality assume $\Gamma_0$ is normal in $\Gamma$. Thus fixing such a $\Gamma_0$, we have that $\mathbb{H}^3 / \Gamma_0$ is a closed hyperbolic $3$-manifold, and hence $\Gamma_0$ admits a finite complete rewriting system by Theorem~\ref{Thm:Hermiller1999b}. Thus, since $\Gamma$ is a finite extension of $\Gamma_0$, it follows easily, see e.g.\ \cite[p.~285]{Groves1993}, that $\Gamma$ also admits a finite complete rewriting system. Thus $\Gamma$ is of type $\FP_\infty$, and indeed an explicit free $\Z\Gamma$-resolution of the trivial module $\Z$, which is finitely generated in each dimension, can be constructed effectively by Theorem~\ref{Thm:AnickGrovesKobayashi}.
\end{proof}

The group $\Gamma_{\mathcal{O}}$ mentioned in the introduction of this article is an example of a hyperbolic $3$-orbifold group. Thus, Bridson \& Reid's question of whether $H_2(\Gamma_{\mathcal{O}}, \Z)=0$ or not can in principle be resolved algorithmically. However, we must temporarily shelve our enthusiasm. Indeed, notice that in the proof of Corollary~\ref{Cor:algo-exists-technically} we have relied on two particularly deep theorems: the Geometrization Theorem and the Virtual Haken Theorem. Furthermore, Hermiller \& Shapiro construct their rewriting systems based on having an explicit witness to the fact that the $3$-manifold (say, $M$) virtually fibres, including explicitly constructing a finite index fibred subgroup $G' \leq \pi_1(M)$ and an extension
\[
1 \to \pi_1(\Sigma_g) \to G' \to \Z \to 1
\]
with the associated conjugacy action of $\Z$ on the surface group $\pi_1(\Sigma_g)$.

We are thus led to ask how difficult the algorithm of Corollary~\ref{Cor:algo-exists-technically} is to implement in practice. That is, if we are given a finite extension of some $3$-manifold group ``out of the gutter'', i.e.\ with no associated information beyond its presentation, and are asked to compute (say) a finite complete rewriting system for its fundamental group, along with its (say) seventh homology group, how difficult is this? As far as the author is aware, this is prohibitively impractical. Indeed, even for the Weeks manifold $\mathcal{W}$ (defined by the group presentation \eqref{Eq:weeks-presentation} in \S\ref{Sec:Weeks-manifold}), the author is not aware of any explicit finite complete rewriting system for $\pi_1(\mathcal{W})$, even though this group is only given by two generators and two relators. There is a known index $24$ subgroup of $\pi_1(\mathcal{W})$, described in \cite[\S9.2]{BMRS} (see also \cite{Button2005}), corresponding to a genus $2$ surface bundle covering $\mathcal{W}$. This, at least in principle, gives a computationally feasible way of finding a finite complete rewriting system for $\pi_1(\mathcal{W})$, and thus also for any of its finite extensions, but the resulting system would likely become unmanageably large for homological computations. 

Nevertheless, the impracticality of applying the algorithm in Corollary~\ref{Cor:algo-exists-technically} can be bypassed in some cases. In the next section (\S\ref{Sec:new-tech}), one of the main results will be a practical algorithm for computing the \textit{second} homology group of any $3$-orbifold group $\Gamma$ sitting in an extension
\[
1 \to \pi_1(M) \longrightarrow \Gamma \longrightarrow Q \to 1
\]
for a finite group $Q \leq \Isom(M)$ and a \textit{rational homology sphere} $\pi_1(M)$. This rather modest-sounding theorem (when compared to Corollary~\ref{Cor:algo-exists-technically}) has the decisive advantage of being both practical and easy to carry out, even by hand.
 
\section{Tools for computing the second homology group}\label{Sec:new-tech}

\noindent In this section, we will develop a toolbox for computing $H_2(\Gamma,\Z)$ for certain lattices $\Gamma < \Isom(\mathbb{H}^3)$. We will first in \S\ref{Subsec:Easy-criterion} give a simple criterion, which turns out to be surprisingly powerful, for ensuring that $H_2(\Gamma,\Z) = 0$. Next, we turn to a more general tool, and in \S\ref{Subsec:rewriting-for-complete-extensions} we will describe a (corrected) construction due to Hermiller \& Meier of a complete rewriting system for an extension $G$ of a group $H$ defined by a complete rewriting system by another such group $Q$. In \S\ref{Subsec:algorithm-description}, we will show how the Kobayashi resolution for this complete rewriting system, which shadows the Lyndon--Hochschild-Serre resolution for an extension, can be used to compute the second homology of $G$, assuming $H_2(H, \Z) = 0$, the group $H$ is finitely presented, and $Q$ is defined by a finite complete rewriting system. 

\subsection{A criterion for vanishing second homology}\label{Subsec:Easy-criterion}

The following basic lemma provides a surprisingly powerful tool to ensure that $H_2(\Gamma,\Z) = 0$ for certain lattices $\Gamma < \Isom(\mathbb{H}^3)$. Indeed, as we shall see in \S\ref{Sec:Weeks-manifold}, in the case of the Weeks manifold $\mathcal{W}$, the lemma will immediately imply that $H_2(\Gamma, \Z) = 0$ for all lattices $\Gamma$ with $\pi_1(\mathcal{W}) \leq \Gamma < \PSL_2(\mathbb{C})$ with two exceptions, including the maximal case, i.e.\ when $\Gamma$ is the normalizer of $\pi_1(\mathcal{W})$ in $\PSL_2(\C)$. 

\begin{lemma}\label{Lem:easy-SES}
Let $\Gamma = \pi^\orb_1(X)$ where $X = M / Q$ for $M$ a hyperbolic rational homology $3$-sphere, and $Q \leq \Isom(M)$. If $\gcd(|Q|, |\pi_1(M)^{\ab}|) = 1$, then $H_2(\Gamma, \Z) \cong H_2(Q, \Z)$.
\end{lemma}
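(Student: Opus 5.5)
We plan to use the Lyndon--Hochschild--Serre spectral sequence for the extension
\[
1 \to H \to \Gamma \to Q \to 1, \qquad H = \pi_1(M).
\]
Its second page is $E^2_{p,q} = H_p(Q, H_q(H,\Z))$, converging to $H_{p+q}(\Gamma,\Z)$. The group $H_2(\Gamma,\Z)$ is built from the three terms on the line $p+q=2$: $E^\infty_{2,0}$, $E^\infty_{1,1}$ and $E^\infty_{0,2}$. The task is to show that the latter two vanish and that $E^\infty_{2,0}=H_2(Q,\Z)$.

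First, $M$ is a closed orientable aspherical $3$-manifold and a rational homology sphere. By Poincaré duality and universal coefficients,
\[
H_2(H,\Z) \cong H^1(M,\Z) \cong \Hom(H_1(M),\Z) = 0,
\]
since $H_1(M) = H^{\ab}$ is finite. Hence the whole row $q=2$ vanishes, and in particular $E^2_{0,2}=0$.

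Next, for $q=1$ the coefficient module is the finite abelian group $A = H^{\ab}$, of order $n = |A|$, with $Q$ acting via conjugation. For $p\geq 1$, the group $H_p(Q,A)$ is annihilated by $|Q|$, by the standard transfer or restriction–corestriction argument. It is also annihilated by $n$, since multiplication by $n$ on $A$ is zero and $H_p(Q,-)$ is additive. Because $\gcd(|Q|,n)=1$, we get $H_p(Q,A)=0$ for all $p\ge 1$. So $E^2_{1,1}=0$, and also $E^2_{2,1}=0$.

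Finally, we examine the $(2,0)$ term. The only differentials that could affect $E_{2,0}$ are $d^2\colon E^2_{2,0}\to E^2_{0,1}$, and $d^r$ out of it for $r\geq 3$, which land in rows $q\geq 2$; there are no incoming differentials, since $p-r<0$ would be required on a lower row. The rows $q\geq 2$ vanish: row $q\geq 3$ because $H$ is the fundamental group of a closed aspherical $3$-manifold with $H_3(H)\cong\Z$ appearing only at $q=3$. So we do need to say a word about $d^3\colon E^3_{3,0}\to E^3_{0,2}=0$, but this is harmless because its target is zero.

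For $d^2\colon H_2(Q,\Z)\to H_0(Q,A)=A_Q$, the source is annihilated by $|Q|$ and the target is a quotient of $A$, hence annihilated by $n$. The image is therefore killed by both $|Q|$ and $n$, so it is zero. Thus $E^\infty_{2,0}=H_2(Q,\Z)$, and since the other two terms on the line $p+q=2$ vanish, $H_2(\Gamma,\Z)\cong H_2(Q,\Z)$.

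The main point needing care is the identification $H_2(\pi_1(M),\Z)=0$. This uses asphericity of the hyperbolic manifold $M$, so that group homology equals manifold homology, together with orientability. If $Q$ contains orientation-reversing isometries, $M$ itself is still orientable, so this is not an issue. The rest is the routine coprimality argument.
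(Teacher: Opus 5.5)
Your proposal is correct and is essentially the paper's proof. It uses the Lyndon--Hochschild--Serre spectral sequence, Poincaré duality to get $H_2(\pi_1(M),\Z)=0$, the transfer plus coprimality to kill $E^2_{p,1}$ for $p>0$, and the same annihilation argument to show that the transgression $H_2(Q,\Z)\to (\pi_1(M)^{\ab})_Q$ is zero. One small slip does not affect the argument: the row $q=3$ does not vanish, since $H_3(\pi_1(M),\Z)\cong\Z$, but you never need it, because $d^r$ out of $E^r_{2,0}$ for $r\ge 3$ lands in negative $p$-degree.
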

\begin{proof}
Consider the homological Lyndon--Hochschild-Serre spectral sequence $E^2_{p,q}$ for the extension 
\[
1 \to \pi_1(M) \to \Gamma \to Q \to 1.
\]
Since $M$ is a rational homology $3$-sphere, we have $H_2(\pi_1(M), \Z) = 0$ by Poincaré duality, and hence $E^2_{p,2}=0$ for all $p \geq 0$. Furthermore, note that since $M$ is a hyperbolic $3$-manifold we have $Q \leq \Isom(M)$ finite, and since $M$ is a rational homology $3$-sphere we have that $\pi_1(M)^\ab$ is finite. Thus, $\gcd(|Q|, |\pi_1(M)^\ab|)$ is well-defined, and since it is equal to $1$ by assumption, it thus follows by using the transfer (see \cite[III.10.4]{Brown1982}) that $H_p(Q,\pi_1(M)^\ab)=0$ for $p>0$, i.e.\ $E^{2}_{p,1}=0$ for $p>0$. Thus the only possible non-zero term on the diagonal $p+q=2$ is $E^2_{2,0}=H_2(Q,\Z)$, and since $E^\infty_{2,0}=E^3_{2,0} = \ker(E^2_{2,0} \to E^{2}_{0,1})$ we immediately deduce that there exists an exact sequence
\begin{equation}\label{Eq:easy-SES-2}
0 \to H_2(\Gamma, \Z) \to H_2(Q, \Z) \xrightarrow{\tau} \pi_1(M)^{\ab}_{Q},
\end{equation}
where $\tau$ is the transgression. But $H_2(Q, \Z)$ is annihilated by $|Q|$ by using the transfer, and hence its order must also be co-prime with $\pi_1(M)^\ab$, and hence also with the quotient group $\pi_1(M)^\ab_Q$. Thus $\Hom(H_2(Q, \Z), \pi_1(M)^\ab_Q) = 0$, so $\tau=0$. Thus \eqref{Eq:easy-SES-2} gives $H_2(\Gamma, \Z) \cong H_2(Q, \Z)$, as desired. 
\end{proof}

More generally, the exact sequence \eqref{Eq:easy-SES-2} also arises whenever $H_1(Q, H_1(H, \Z))=0$ and $M$ is a rational homology $3$-sphere. When coprimality of the orders is not available, however, we need a more general tool; we now provide such a tool. 

\subsection{Complete rewriting systems for group extensions}\label{Subsec:rewriting-for-complete-extensions}

Throughout this section, we will let 
\begin{equation}\label{Eq:our-extension-for-Kobayashi}
1 \to H \to G \xrightarrow{\pi} Q \to 1
\end{equation}
be an extension of groups with $H$ a finitely generated group, and $Q$ a group admitting a finite complete rewriting system. We say that $G$ is a \textit{complete extension} of $H$ (by $Q$). We will make no assumption of finite presentability of $H$, but we will assume without loss of generality that $\cR_H$ is some complete, reduced rewriting system and that $A$ is a finite \textit{monoid} generating set for $H$, i.e.\ that $H \cong \pres{Mon}{A}{\cR_H}$. We similarly assume $Q$ is given by a complete, reduced monoid presentation $\pres{Mon}{B}{\cR_Q}$. 

The corresponding surjective morphisms will be denoted $\pi_H \colon A \to H$ resp.\ $\pi_Q \colon B \to Q$. We fix some map $s \colon \cR_Q \to  \Irr(\cR_H)$, calling it the \textit{defect map}, such that $s(u, v) \in \Irr(\cR_H)$ is a $\cR_H$-irreducible word representing $uv^{-1}$ in the extension $G$. For example, the extension \eqref{Eq:our-extension-for-Kobayashi} being split is equivalent to saying that $s(u,v) = 1$ in $H$ (and therefore also $G$) for all rules $(u,v) \in \cR_Q$. We fix a map
\[
\varphi \colon B \times A \to \Irr(\cR_H)
\]
such that $\varphi(b, a)$ represents the element $\overline{\pi_Q(b)} \pi_H(a)  \overline{\pi_Q(b)}^{-1}$ in $H$ (in other words, $\varphi$ encodes the conjugation action of $Q$ on $H$). To simplify notation, we will often write $\varphi_b(a)$ for this map, and thus we have $\overline{b}a = \varphi_b(a)\overline{b}$ in $G$ for all $a \in A, b \in B$. The map $\varphi$ extends naturally to a map $\varphi^\ast \colon  B^\ast \times A^\ast \to A^\ast$.

We will present a proof due to Hermiller \& Meier \cite{Hermiller1999}\footnote{In fact, there is a gap in the proof of \cite[Theorem~2.2]{Hermiller1999}. Indeed, the proof therein only covers the case of a split extension, and does not take the extension class into account. Susan Hermiller has, in private communication with the author, provided the below corrected proof.} for constructing a complete rewriting system $\mathcal{R}_G$ on the alphabet $A \cup B$ for $G$. The rules of the system will be the union of the following sets:
\begin{enumerate}
\item all rules from $\mathcal{R}_H$,
\item all rules of the form $(ba \to \varphi_b(a)b)$ for all $a \in A, b \in B$. 
\item all rules of the form $(u \to s(u,v)v)$ whenever $(u \to v) \in \cR_Q$. 
\end{enumerate}
Note that there are generally infinitely many rules of the first type (since $\cR_H$ is not assumed to be finite), but if $\cR_Q$ is finite and $H$ is finitely generated by the set $A$, then there are only finitely many of the second and third type. We will use this system $\cR_G$ to compute transgression maps in a spectral sequence for the homology of $G$, and thus also $H_2(G, \Z)$ in some special cases. Before doing this, however, we present the aforementioned proof of completeness of $\cR_G$. 

First, we mention two useful facts about words modulo $\cR_G$. The first is that since for every pair $(b,a) \in B \times A$ there exists a rule $ba \to \varphi_b(a)b$ in $\cR_G$, and consequently that $ba$ is reducible, we see that every irreducible word modulo $\cR_G$ is of the form $hw$ for some $h \in A^\ast$ and $w \in B^\ast$, where $h$ is irreducible modulo $\cR_H$ and $w$ is irreducible modulo $\cR_Q$. Second, we note that if $h \in A^\ast$ and $w \in B^\ast$, then for all $q \in B^\ast$ we have
\begin{equation}\label{Eq:rewrite-respects-projection}
q \longrightarrow^\ast_{\cR_G} hw \quad \iff \quad q \longrightarrow^\ast_{\cR_Q} w.
\end{equation}
Indeed, this follows directly by induction on the number of rewriting steps, and the fact that the rules of type (2) in $\cR_G$ do not alter the projection of a word onto $B^\ast$. Thus, $\cR_Q$ can be used to compute the projection map $\pi \colon G \to Q$. 

The proof of the following proposition is similar to the proof of \cite[Theorem~2.2]{Hermiller1999}, but contains a fix of a gap therein; the fix was kindly provided to the author by Hermiller. 

\begin{proposition}\label{Prop:HermillerTheorem}
The rewriting system $\cR_G$ is complete and reduced. 
\end{proposition}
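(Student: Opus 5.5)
The plan is to prove termination directly and then obtain confluence without a case-by-case analysis of critical pairs. The key is a uniqueness statement for irreducible words, which is where the defect map $s$ enters and where the gap in the original argument sits. Reducedness will be a direct inspection of the three rule types.

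\emph{Termination.} For a word $x = h_0 b_1 h_1 b_2 \cdots b_k h_k$ with $h_i \in A^\ast$ and $b_i \in B$, let $w(x) = b_1 \cdots b_k$ be its projection to $B^\ast$. Rules of types (1) and (2) leave $w(x)$ unchanged. A rule of type (3) rewrites $w(x)$ by one $\cR_Q$-step, so $\delta_{\cR_Q}(w(x))$ strictly drops. It therefore suffices to show that there is no infinite chain of type (1) and (2) steps with $k$ fixed. To each such $x$ attach the tuple $(h_k, h_{k-1}, \dots, h_0)$, ordered lexicographically from the left, i.e.\ $h_k$ is compared first. On each component use the order $\succ_A$ generated by $\cR_H$-rewriting together with passing to a proper suffix. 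A type (2) step $b_i a \to \varphi_{b_i}(a) b_i$ has two effects:
\begin{itemize}
\item it replaces $h_i = a h_i'$ by its proper suffix $h_i'$, which is a strict decrease;
\item it changes $h_{i-1}$ arbitrarily, but this component is compared later in the lexicographic order.
\end{itemize}
A type (1) step strictly decreases exactly one component and leaves the others unchanged. The remaining point, which I expect to be the main technical obstacle, is that $\succ_A$ is well-founded on $A^\ast$. The idea is to control mixed chains of suffix-taking and $\cR_H$-rewriting by the pair (length, $\delta_{\cR_H}$). A suffix of a word admits no longer rewriting sequence and is strictly shorter. A rewriting step strictly lowers $\delta_{\cR_H}$, and a suffix of a word has disorder at most that of the word. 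Combining these should give a well-founded measure, for instance lexicographic in $(\delta_{\cR_H}, |\cdot|)$ after checking that suffix-taking does not increase $\delta_{\cR_H}$. If this is delicate, I will fall back on a recursive-path-style ordering that treats the $B$-letters as heavier than the $A$-letters.

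\emph{Confluence.} Every rule of $\cR_G$ holds in $G$:
\begin{itemize}
\item type (1) rules hold because they hold in $H$;
\item type (2) rules hold by the choice of $\varphi$;
\item type (3) rules hold because $s(u,v)$ represents $uv^{-1}$ in $G$. This is precisely where the extension class is taken into account.
\end{itemize}
Hence any rewriting sequence preserves the element of $G$ represented. By termination, every word $x$ reduces to some irreducible word, which by the remark above has the form $hw$ with $h \in \Irr(\cR_H)$ and $w \in \Irr(\cR_Q)$. By \eqref{Eq:rewrite-respects-projection}, $w$ is the unique $\cR_Q$-normal form of $w(x)$, so $w$ is determined by $\pi(x) \in Q$. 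Then $h$ represents the element $x w^{-1} \in H$, and as an $\cR_H$-irreducible word it is determined uniquely by completeness of $\cR_H$. Thus every word has a unique irreducible descendant, so $\cR_G$ is confluent.

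This argument also shows that the map from irreducible words to $G$ is a bijection: injectivity is the uniqueness just proved, and surjectivity holds because every $g \in G$ can be written as $hw$ using the normal form of $\pi(g)$. Consequently $\pres{Mon}{A \cup B}{\cR_G} \cong G$.

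\emph{Reducedness.} Left-hand sides of type (1) lie in $A^\ast$, those of type (3) lie in $B^\ast$, and those of type (2) have the form $ba$. So a left-hand side can contain another only within the same type. For types (1) and (3) this is excluded by reducedness of $\cR_H$ and $\cR_Q$, and type (2) left-hand sides have length $2$ and are pairwise distinct. For the right-hand sides:
\begin{itemize}
\item $\varphi_b(a) b$ is irreducible, since $\varphi_b(a) \in \Irr(\cR_H)$ and it is followed by a single letter of $B$;
\item $s(u,v) v$ is irreducible, since $s(u,v) \in \Irr(\cR_H)$ and $v \in \Irr(\cR_Q)$;
\item type (1) right-hand sides are irreducible by reducedness of $\cR_H$.
\end{itemize}
In each case no type (2) pattern $ba$ occurs, and no left-hand side of the other types occurs inside an $A$-block or a $B$-block.
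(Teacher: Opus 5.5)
Your proof is correct and is essentially the paper's proof. Completeness is reduced to termination through the bijection between irreducible words $hw$ and elements of $G$. Termination uses the $\cR_Q$-disorder of the $B$-projection followed by the pairs $(\delta_{\cR_H}, |\cdot|)$ of the $A$-blocks, which are the paper's weights $\psi_j$. Your remark that type (3) steps occur only finitely often replaces the paper's stretch-based padding of the block decomposition. Your lexicographic $(\delta_{\cR_H}, |\cdot|)$ measure already settles the well-foundedness of $\succ_A$, so no fallback is needed.

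As in the paper, your reducedness check tacitly assumes that no single letter is a left-hand side of $\cR_H$ or $\cR_Q$; otherwise $ba$ or $\varphi_b(a)b$ would contain a left-hand side. This is harmless, but it should be stated.
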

\begin{proof}
By the above observation of the set of irreducible words of $\cR_G$, it follows that there is a bijection between the set of irreducible words modulo $\cR_G$ and the elements of $G$. In particular, we only need to prove that $\cR_G$ is terminating in order to show that it is complete; indeed, the fact that it is reduced follows by assumption of the fact that $\cR_H$ and $\cR_Q$ are both assumed to be reduced, and that all words $s(u,v)$ are chosen to be irreducible. 

Thus, let us prove that $\cR_G$ is terminating. Recall the definition of \textit{stretch} and \textit{disorder} from \S\ref{Subsec:background-rewriting}. Let $w \in (A \cup B)^\ast$, and let $w'$ be the projection of the word to $B^\ast$, i.e.\ the word obtained by removing all letters from $A$. Let $n = \sigma_{\cR_Q}(w')$ be the stretch of $w'$ with respect to $\cR_Q$. We can then write, factoring from right to left,
\[
w \equiv k_1 q_1'' k_2 \cdots q_n'' k_{n+1}, \quad k_i \in A^\ast, \: q_i'' \in B \cup \{ 1\},
\]
such that if $q_i'' \equiv 1$, then the preceding prefix $k_{1} q_{1}'' \cdots q_{i-1}'' k_{i}$ is empty. We then define the following weight functions $\psi_j \colon (A \cup B)^\ast \to \mathbb{N}$ by:
\begin{equation*}
\begin{cases}
\psi_0(w) := \sigma_{\cR_Q}(w') & \\
\psi_{2i}(w) := \delta_{\cR_H}(k_i), & 
\end{cases}
\quad \text{and} \quad
\begin{cases}
\psi_1(w) := \delta_{\cR_Q}(w') & \\
\psi_{2i+1}(w) := |k_i|, &
\end{cases}
\end{equation*}
for all $1 \leq i \leq n$. By definition of the stretch of $w'$, these functions can be defined also for any descendant of $w$ modulo $\cR_G$, with the same $n$. For any $v \in (A \cup B)^\ast$ with $w \xr{\cR_G} v$, we will now show that
\begin{equation}\label{Eq:decreasing}
(\psi_{2n}(w), \psi_{2n-1}(w), \dots, \psi_{0}(w)) \succ (\psi_{2n}(v), \psi_{2n-1}(v), \dots, \psi_{0}(v)) 
\end{equation}
where $\succeq$ is the \textit{reverse} lexicographical ordering on $\mathbb{N}^{2n+1}$. This will be sufficient to establish the termination of $\cR_G$. Thus suppose $v$ is such that $w \xr{\cR_G} v$. First, if the rule witnessing this rewriting is of type (1), i.e.\ from $\cR_H \subseteq \cR_G$, then it must rewrite some $k_j$ to $k_j' \in A^\ast$, where $1 \leq j \leq n+1$. Then $\delta_{\cR_H}(k_j) > \delta_{\cR_H}(k_j')$, and hence $\psi_{2j}(w) > \psi_{2j}(v)$. Since the succeeding $\psi_i$, i.e.\ those with $i > 2j$, are unaffected by this rewriting, we thus see that \eqref{Eq:decreasing} holds. On the other hand, if the rule witnessing the rewriting $w \xr{\cR_G} v$ is of type (3), i.e.\ of the form $q_1 \to s(q_1, q_2) q_2$ where $(q_1 \to q_2) \in \cR_Q$, then let $q_j''$ be the left-most of the $q_i''$ in $w$ that is affected by the rewriting, and $q_\ell''$ the rightmost. Then necessarily $k_{i}'' \equiv 1$ for all $j+1 \leq i \leq \ell$, and $q_1 \equiv q_i'' q_{i+1}'' \cdots q_{\ell}''$. Thus $\psi_{0}(v) \leq \psi_0(w)$. Furthermore, we have that $\delta_{\cR_Q}(v) < \delta_{\cR_Q}(w)$, and hence $\psi_1(v) < \psi_1(w)$. Thus in this case we also have \eqref{Eq:decreasing}, since the ordering is lexicographical. Finally, if the rule witnessing the rewriting $w \xr{\cR_G} v$ is of type (2), i.e.\ of the form $qh \to \varphi_q(h)q$, with $q \in B$ and $h \in A$, then suppose that the letters affected are a prefix of $q_i'' k_{i+1}$, so that $qh \equiv q_i'' h$, where $k_{i+1} \equiv h k_{i+1}''$ for some $k_{i+1}'' \in A^\ast$, i.e.\ $k_{i+1}''$ is just $k_{i+1}$ with the first letter removed. Thus $\delta_{\cR_H}(k_{i+1}) \geq \delta_{\cR_H}(k_{i+1}'')$ and $|k_{i+1}| > |k_{i+1}''|$. However, factoring $v$, it is easy to see that $\psi_j(w) \equiv \psi_j(v)$ for all $j < 2i$, and the argument just made shows that $\psi_{2i}(w) \geq \psi_{2i}(v)$ and $\psi_{2i+1}(w) > \psi_{2i+1}(v)$. Thus \eqref{Eq:decreasing} holds also in this case. In particular, in all cases, we have \eqref{Eq:decreasing}, and since there is no infinite descending chain in the reverse lexicographical ordering on $\mathbb{N}^{2n+1}$, it follows that $\cR_G$ is terminating, as required. 
\end{proof}

If $\cR_H$ and $\cR_Q$ are both finite systems, then since there are only finitely many rules of type (2), we see that the entire system $\cR_G$ is finite, too. Thus, we have the following result, appearing already in the literature as e.g.\ \cite[Theorem~2.2]{Hermiller1999} and \cite[Proposition~3.4]{Hermiller1999b} (see also \cite[p.~285]{Groves1993}), but with incomplete proofs. 

\begin{theorem}[Hermiller \& Meier \cite{Hermiller1999}]\label{Thm:Hermiller-Meier-G-FCRS}
Let $G$ be a group that is an extension
\[
1 \longrightarrow H \longrightarrow G \longrightarrow Q \longrightarrow 1.
\]
If $H$ and $Q$ both admit finite complete rewriting systems, then so too does $G$. 
\end{theorem}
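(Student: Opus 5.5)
The plan is to deduce the theorem directly from Proposition~\ref{Prop:HermillerTheorem}, after checking that the data it needs can be chosen so that $\cR_G$ is finite. Fix finite complete rewriting systems for the two groups. We may take them reduced, since any finite complete system can be replaced by an equivalent finite complete reduced one via the standard interreduction procedure. Call these $\pres{Mon}{A}{\cR_H}$ for $H$ and $\pres{Mon}{B}{\cR_Q}$ for $Q$, with $A$ and $B$ finite. Identify $H$ with its image in $G$, and for each $b \in B$ choose a lift $\overline{b} \in G$ of $\pi_Q(b)$. These lifts, together with $A$, generate $G$ as a monoid: every $g \in G$ is $h \cdot \overline{w}$, where $w \in B^\ast$ represents $\pi(g)$ and $h \in H$.

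Next we define the defect map and the conjugation data. For each rule $(u \to v) \in \cR_Q$, the element $\overline{u}\,\overline{v}^{-1}$ of $G$ (interpreting the words through the chosen lifts) maps to $1$ in $Q$, so it lies in $H$. We set $s(u,v)$ to be its $\cR_H$-normal form. For $b \in B$ and $a \in A$, the element $\overline{b}\,\pi_H(a)\,\overline{b}^{-1}$ lies in $H$ by normality, and we let $\varphi_b(a)$ be its $\cR_H$-normal form. With these choices, every rule of types (1)--(3) holds as an equality in $G$. Hence the monoid $\pres{Mon}{A \cup B}{\cR_G}$ surjects onto $G$.

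The remaining input to Proposition~\ref{Prop:HermillerTheorem} is the bijection between irreducible words and elements of $G$, which its proof uses. Irreducible words have the form $hw$ with $h \in \Irr(\cR_H)$ and $w \in \Irr(\cR_Q)$. Such words correspond bijectively to $H \times Q$, and hence to $G$ via $(h,q) \mapsto h\,\overline{q}$. Injectivity of the induced map follows from uniqueness of normal forms in $H$ and $Q$ together with \eqref{Eq:rewrite-respects-projection}. Combined with termination, which the proposition supplies, this shows $\cR_G$ is a complete system presenting $G$.

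Finally, $\cR_G$ consists of $\cR_H$, which is finite, one rule for each of the finitely many pairs in $B \times A$, and one rule for each rule of $\cR_Q$. So $\cR_G$ is finite, and the theorem follows.

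The only step needing care is the identification of the monoid defined by $\cR_G$ with $G$, rather than with a proper quotient or a non-group monoid. The approach is to count irreducible words as above: the surjection onto $G$ restricts to a bijection on normal forms, and so it is an isomorphism.
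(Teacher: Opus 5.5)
Your proposal is correct and takes the same route as the paper. The paper also deduces the theorem from Proposition~\ref{Prop:HermillerTheorem}, whose real content is termination of $\cR_G$, together with the observation that $\cR_G$ is finite when $\cR_H$ and $\cR_Q$ are. Your explicit choice of lifts, defect map $s$ and action $\varphi$, and your check that $\pres{Mon}{A \cup B}{\cR_G} \to G$ is bijective on irreducible words, spell out what the paper compresses into the first sentence of that proposition's proof.
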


We will now deduce information about the homology of $G$ by using the complete rewriting system $\cR_G$, and in particular gain explicit control over terms in a spectral sequence for computing $H_2(G, \Z)$.

\subsection{Algorithm for the second homology group}\label{Subsec:algorithm-description}

We retain all notation from the previous sections; in particular, $G$ is a group, $H \trianglelefteq G$ and $G / H = Q$, with $\cR_Q, \cR_H$, and $\cR_G$ are the complete rewriting systems constructed in \S\ref{Subsec:rewriting-for-complete-extensions}. Let $(P_\bullet, \partial_\bullet)$ be the right Kobayashi resolution associated to the complete rewriting system $\cR_G$. We recall the notation $V^{(n)}$ as the $\ZM$-basis of the free right $\ZM$-module $P_n$ from \S\ref{Subsec:background-rewriting}. Let $V^{(n)}_Q$ resp.\ $V^{(n)}_H$ denote the bases of the $\Z Q$-free resp.\ $\Z H$-free modules $P_n^Q$ resp.\ $P_n^H$ in the Kobayashi resolution associated to the complete rewriting systems $\cR_Q$ resp.\ $\cR_H$. Notice that since the only rules of $\cR_G$ which begin with a letter from $A$ come from $\cR_H$, and since the left-hand side of every second type of rule is of the form $ba$, where $a \in A$ is a single letter, it follows that every element of $V^{(n)}$ can be written of the form 
\[
(b_1, \dots, b_p, a_{1}, \dots, a_q) \text{ with $b_i \in B$ and $a_j \in A$}
\]
where $p+q = n$, such that $(b_1, \dots, b_p) \in V^{(p)}_Q$ and $(a_1, \dots, a_q) \in V^{(q)}_H$. We now give a ``small'' filtration on the chain complex $(P_\bullet, \partial_\bullet)$ arising from this. We filter $P_1$ by letting the basis elements $(a)$ with $a \in A$ have degree $0$, and the basis elements $(b)$ with $b \in B$ have degree $1$. We denote these two sets $V^{(1)}_0$ resp.\ $V^{(1)}_1$. For $P_2$, the filtration is defined by letting the basis elements 
\[
(h_1, h_2), \quad (q_1, h_1), \quad \text{resp.} \quad (q_1, q_2)
\]
have degree $0, 1$, resp.\ $2$, where $h_1, h_2 \in A^+$ and $q_1, q_2 \in B^+$. The sets of basis elements of degree $0, 1$, resp.\ $2$ are denoted $V^{(2)}_0, V^{(2)}_1$, resp.\ $V^{(2)}_2$. Analogously, for $n= 3$ the filtration is defined on a basis element $(x_1, x_2, x_3)$ as the number of $B$-type entries in the tuple (either $0, 1, 2$, or $3$); and we use the notation $V^{(3)}_i$ for $0 \leq i \leq 3$ for the four degrees. Finally, for $n \geq 4$ all entries have degree $3$. The filtration is summarized in Table~\ref{Tab:filtration}. As part of our computations of $\partial_2$ and $\partial_3$, we will see that they preserve this filtration, and thus we obtain a spectral sequence $E^\bullet_{p,q}$ converging to the homology of $G$. This spectral sequence can be seen as a minimal ``truncation'' of the Lyndon--Hochschild--Serre spectral sequence specifically for computing $H_2(G,\Z)$. In particular, we have that $E^0_{0,1}$ resp.\ $E^0_{1,0}$ is free abelian on $\{ (a) \mid a \in A \}$ resp.\ $\{ (q) \mid q \in B \}$. Furthermore, $E^0_{0,2}, E^0_{1,1}$, resp.\ $E^0_{2,0}$ is free abelian on 
\[
\mathcal{E} \cap (A \times A^\ast), \quad \mathcal{E} \cap (B \times A), \quad \text{resp.} \quad \mathcal{E} \cap (B \times B^\ast),
\]
being the sets $V^{(2)}_0, V^{(2)}_1$, resp.\ $V^{(2)}_2$. The analogous statement holds also for the bases of $E^0_{0,3}, E^0_{1,2}, E^0_{2,1}$, and $E^0_{3,0}$. Finally, for $p+q = n \geq 4$, we have
\[
E^0_{p,q} = \begin{cases*}
P_n \otimes_{\Z G} \Z & if $p=3$, \\
0  & otherwise.
\end{cases*}
\]
These last entries, in the higher degrees, will not be relevant to our computations. 

\begin{table}[]
\begin{tabular}{l|lllll}
      & $n=0$                        & $n=1$                         & $n=2$                                & $n=3$                                     & $n \geq 4$                    \\ \hline
$p=0$ & \cellcolor[HTML]{FFFFFF}$\Z$ & \cellcolor[HTML]{FFFFFF}$(a)$ & \cellcolor[HTML]{FFFFFF}$(h_1, h_2)$ & \cellcolor[HTML]{FFFFFF}$(h_1, h_2, h_3)$ & \cellcolor[HTML]{FFFFFF} \\
$p=1$ & \cellcolor[HTML]{FFFFFF}     & \cellcolor[HTML]{FFFFFF}$(b)$ & \cellcolor[HTML]{FFFFFF}$(q_1, h_1)$ & \cellcolor[HTML]{FFFFFF}$(q_1, h_1, h_2)$ & \cellcolor[HTML]{FFFFFF}      \\
$p=2$ & \cellcolor[HTML]{FFFFFF}     & \cellcolor[HTML]{FFFFFF}      & \cellcolor[HTML]{FFFFFF}$(q_1, q_2)$ & \cellcolor[HTML]{FFFFFF}$(q_1, q_2, h_1)$ & \cellcolor[HTML]{FFFFFF}      \\
$p=3$ & \cellcolor[HTML]{FFFFFF}     & \cellcolor[HTML]{FFFFFF}      & \cellcolor[HTML]{FFFFFF}             & \cellcolor[HTML]{FFFFFF}$(q_1, q_2, q_3)$ & \cellcolor[HTML]{FFFFFF}     $P_n$
\end{tabular}
\caption{The filtration on the Kobayashi resolution $(P_\bullet, \partial_\bullet)$ of the rewriting system $\cR_G$, which we will use to compute $H_2(G, \Z)$. The row corresponds to the filtration degree.}\label{Tab:filtration}
\end{table}

\renewcommand{\thealgorithm}{for computing $H_2(G, \Z)$, \textnormal{with $1 \to H \to G \to Q \to 1$ an extension, with $H$ having $H_2(H, \Z)=0$, and given a finite, reduced, complete rewriting system $\cR_Q \subseteq B^\ast \times B^\ast$ for $Q$, a section $s \colon \cR_Q \to A^\ast$, and the action $\varphi \colon B \times A \to A^\ast$.}}
\begin{algorithm}
\caption{}\label{alg:cap}
\begin{algorithmic}[1]
\State Write down the finite set of rules of $\cR_G \setminus \cR_H$ as described in \S\ref{Subsec:rewriting-for-complete-extensions}.
\State Compute the finite sets $V^{(2)}_{1}, V^{(2)}_{2}$ and $V^{(3)}_2, V^{(3)}_3$. Let $n_1 = |V_1^{(2)}|$ and $n_2 = |V_2^{(2)}|$. Let $m_1 = |V_2^{(3)}|$, and $m_2 = |V_3^{(3)}|$. Note in particular that $n_1 = |B| \cdot |A|$. 
\State Using the inductive definition \eqref{Eq:partial_n-definition}, compute the value of $\partial_2$ on $V^{(2)}_{1}$ and $V^{(2)}_{2}$. Tensor the map with $\Z$ by forgetting coefficients, giving the map $\widetilde{\partial_2}$ taking values in $\widetilde{P}_1$, which is free abelian on $|A| + |B|$ elements. We record these values of $\widetilde{\partial_2}$ in an integer $(n_1+n_2)\times (|A| + |B|)$-matrix $M_2$. Identify the first $|A|$ columns as a basis for $H^\ab = E^1_{0,1}$, by mapping $(a) \to [a]_\ab$ for all $a \in A$. 
\State Using the inductive definition \eqref{Eq:partial_n-definition}, and the previous computation of $\partial_2$, compute the map $\partial_3 + \fg_0 P_2$ on $V^{(3)}_2$ and $V^{(3)}_3$ in the following manner: we compute $\partial_3$ inductively as usual, but delete any occurrence of terms of the form
\[
\partial_2 ( (h_1, h_2) \circ w) \quad \text{and} \quad i_2((h_1) \circ w) \quad \text{and} \quad i_2 \left( \partial_2  (h_1, h_2)  \circ w\right)
\]
for all $h_1, h_2 \in A^\ast$ and $w \in \Z M$, as these will all land in $\fg_0 P_2$. 
\State Tensor the map $\partial_3$ above with $\Z$, giving a map from landing in $\fg_1 \widetilde{P}_2 \oplus \fg_2 \widetilde{P}_2$. We record this in an integer $(m_1 + m_2) \times (n_1 + n_2)$-matrix $M_3$. Identify the first $n_1$ columns as a basis for $\Z[B] \times H^\ab = E^1_{1,1}$, by identifying $(b, a)$ with $(b, [a]_\ab)$ for all $b \in B$ and $a \in A$.
\State The row space of the top left $(n_1 \times |A|)$-minor in $M_2$ gives the image of the horizontal map $d_{1,1}^{1} \colon E^{1}_{1,1} \to E^1_{0,1} = H^\ab$. 
\State The row space of the bottom right $(n_2 \times |B|)$-minor in $M_2$ gives the image of the horizontal map $d_{2,0}^{1} \colon E^{1}_{2,0} \to E^1_{1,0}$ (indeed, it coincides with the map $\widetilde{\partial}^Q_2$).
\State The row space of the top left $(m_1 \times n_1)$-minor in $M_3$ gives the image of the horizontal map $d_{2,1}^{1} \colon E^{1}_{2,1} \to E^1_{1,1} = B \times H^\ab$. 
\State The row space of the bottom right $(m_2 \times n_2)$-minor in $M_3$ gives the image of the horizontal map $d_{3,0}^{1} \colon E^{1}_{3,0} \to E^1_{2,0}$ (indeed, it coincides with the map $\widetilde{\partial}^Q_3$).
\State Taking homology, we can compute $E^2_{0,1}$ as the quotient of $H^\ab$ by $\im(d_{1,1}^1)$, giving the co-invariants $H^\ab_Q$. We also compute the groups $E^2_{1,1}, E^2_{2,0}$. The latter two are isomorphic to $H_1(Q, H^\ab)$ resp.\ $H_2(Q, \Z)$. 
\State Choose a basis for the kernel of $d_{2,0}^1$ (which becomes a basis for $H_2(Q, \Z)$) using the above information. For each such basis element $\mathbf{v}$, there is a corresponding element in the row space of the bottom $(n_2 \times (|A| + |B|))$-minor of $M_2$, and this element, which by definition is entirely supported in the first $|A|$ columns, projects under the abelianization map to an element of $H^\ab$. That element is the image of $\mathbf{v}$ under the transgression map $d^2_{2,0} \colon H_2(Q, \Z) \to H_0(Q, H^\ab)$. Repeat this process for every basis element of the kernel of $d_{2,0}^1$. This computes the entire transgression map $d^2_{2,0}$. 
\State Choose a basis for the kernel of $d_{3,0}^1$ (which becomes a basis for $H_3(Q, \Z)$) using the above information. For each such basis element $\mathbf{v}$, there is, like in the previous step, a corresponding element in the row space of the bottom $(m_2 \times (n_1+n_2))$-minor of $M_3$, and this element is entirely supported in the first $n_1$ columns. Projecting this row-space element to $B \times H^\ab$, we get the image of (the $H_3$-equivalence class of) $\mathbf{v}$ under the transgression map $d^2_{3,0} \colon H_3(Q, \Z) \to H_1(Q, H^\ab)$. Repeat this process for every basis element, giving the entire transgression map $d^2_{3,0}$. 
\State We compute $E^\infty_{2,0} = E^3_{2,0} = \ker d^2_{2,0}$ and $E^\infty_{1,1} = E^3_{1,1} = E^2_{1,1} / \im(d^2_{3,0})$. The spectral sequence has now converged for computing $H_2(G, \Z)$. Since $H_2(H, \Z) = 0$, the diagonal $p+q=2$ has three terms $0, E^\infty_{1,1}$, and $E^\infty_{2,0}$. Thus we have an exact sequence
\begin{align}\label{Eq:SES-algorithm}
0 \to E^\infty_{1,1} \to H_2(G, \Z) \to E^\infty_{2,0} \to 0.
\end{align}
\State Using a standard diagram chasing argument, the extension class of \eqref{Eq:SES-algorithm} can easily be determined (if one is only checking whether $H_2(G, \Z) = 0$ or not, this step is not needed). The information is entirely contained in the computation of $\widetilde{\partial}_2$ and $\widetilde{\partial}_3$. 
\end{algorithmic}
\end{algorithm}

We now use this filtration to compute several sets, which yield a spectral sequence that computes $H_2(G, \Z)$. The algorithm is described in the algorithm environment below. A proof of the correctness of this algorithm is provided in the Appendix to this article. This proof consists, primarily, of verifying that the filtration on $P_\bullet$ defined above is preserved by $\partial$, that ignoring all terms supported in degree $0$ is valid in Step~4, and that there are canonical identifications 
\[
E^2_{p,q} \cong H_p(Q, H_q(H, \Z)) \quad \text{when } (p,q) \in \{ (0,1), (1,0), (1,1), (2,0), (3,0) \}
\]
as in the Lyndon--Hochschild-Serre spectral sequence. The algorithm computes some data in our spectral sequence, and in particular we compute the homomorphisms
\[
d^1_{1,1} \colon E^1_{1,1} \to E^1_{0,1}, \quad d^1_{2,1} \colon E^1_{2,1} \to E^1_{1,1}, \quad d^1_{2,0} \colon E^1_{2,0} \to E^1_{1,0}, \quad \text{and} \quad d^1_{3,0} \colon E^1_{3,0} \to E^1_{2,0}
\]
in the first page, and the two transgressions 
\[
d^2_{2,0} \colon E^2_{2,0} \to E^2_{0,1}, \quad \text{and} \quad d^2_{3,0} \colon E^2_{3,0} \to E^2_{1,1}
\]
in the second page. This data is all contained in two matrices $M_2$ and $M_3$, which are naturally subdivided into minors, and these minors provide the homomorphisms as follows, with notation as in the algorithm:
\[
{ \renewcommand{\arraystretch}{1.8}
\setlength{\arraycolsep}{10pt}
\raisebox{-1.5ex}{$M_2 =$}
\begin{blockarray}{ccc}
  & \scriptstyle |A| & \scriptstyle |B| \\
  \begin{block}{c(c|c)}
    \scriptstyle n_1 & d^1_{1,1} & \mathbf{0} \\
    \cline{2-3}
    \scriptstyle n_2 & d^2_{2,0} & d^1_{2,0} \\
  \end{block}
\end{blockarray} \quad 
\raisebox{-1.5ex}{$M_3 =$}
\begin{blockarray}{ccc}
  & \scriptstyle n_1 & \scriptstyle n_2 \\
  \begin{block}{c(c|c)}
    \scriptstyle m_1 & d^1_{2,1} & \mathbf{0} \\
    \cline{2-3}
    \scriptstyle m_2 & d^2_{3,0} & d^1_{3,0} \\
  \end{block}
\end{blockarray}
}
\]
These two matrices carry all of the data for computing $H_2(G, \Z)$ in the case that $H_2(H, \Z)=0$. The two matrices are, by the above algorithm, computable if $\cR_Q$ is finite, the section map $s \colon \cR_Q \to A^\ast$ is effectively computable (which it is if there is an effectively computable section $Q \to G$), and if the conjugation action $\varphi$ of $Q$ on $A^\ast$ is effectively computable. Thus, if $H_2(H, \Z)=0$, then by Step~14 of the algorithm the isomorphism type of $H_2(G, \Z)$ can be determined. In other words, we have the main result of this section:

\begin{theorem}\label{Thm:main-thm-h2-G}
Let $H$ be a finitely presented group with $H_2(H, \Z) = 0$. Suppose that 
\[
1 \to H \to G \to Q \to 1
\]
is an extension such that 
\begin{itemize}
\item $Q$ admits a finite complete rewriting system, 
\item a section map $s \colon Q \to G$ is effectively computable, and 
\item the conjugation action of $Q$ on $H^\ab$ is effectively computable.
\end{itemize}
Then there is a (practical) algorithm for computing $H_2(G, \Z)$. 
\end{theorem}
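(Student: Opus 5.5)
Since $H_2(H,\Z)=0$, the plan is to read off $H_2(G,\Z)$ from the bottom two rows of a filtered version of the Kobayashi resolution. The structural difficulty is that $\cR_H$ may be infinite. We therefore never compute with $\cR_H$ directly. Instead we show that everything in filtration degree $0$ may be ignored, apart from the abelianization $H^\ab$, which is computable from a finite presentation of $H$.

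First, fix a finite generating set $A$ for $H$, closed under inverses so that it generates $H$ as a monoid. Take any complete reduced rewriting system $\cR_H$ for $H$; the system $\{u\to \overline{u}\}$ for some total normal form will do abstractly, and it is never used explicitly. Fix a finite complete reduced system $\cR_Q$ on $B$. Using the computable section, choose the defect words $s(u,v)\in A^\ast$. Using the computable action, choose $\varphi_b(a)$; only its image in $H^\ab$ will matter. Proposition~\ref{Prop:HermillerTheorem} then makes $\cR_G$ complete, and its part $\cR_G\setminus\cR_H$ is finite and explicit.

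Next, filter the Kobayashi resolution $(P_\bullet,\partial_\bullet)$ of $\cR_G$ as in Table~\ref{Tab:filtration}. The basis of $P_n$ consists of tuples $(b_1,\dots,b_p,a_1,\dots,a_q)$, and we check from \eqref{Eq:partial_n-definition} and Lemma~\ref{Lem:Kobayashi-c} that $\partial$ does not increase the number of $B$-entries. The point is that $i_n$ applied to $\mathbf v\circ x$ only appends suffix entries taken from rewriting $v_nx$. Rewriting never moves $A$-letters to the right of $B$-letters, so $B$-count can only drop. This gives a filtered complex, and hence a spectral sequence converging to $H_\ast(G,\Z)$.

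We then identify the low terms. The degree-$p$ graded piece $\mathrm{gr}_pP_\bullet\otimes_{\Z G}\Z$ is the Kobayashi complex of $\cR_Q$ tensored with the Kobayashi complex of $\cR_H$, twisted by the action $\varphi$. From this, $E^2_{p,q}\cong H_p(Q,H_q(H,\Z))$ for $(p,q)\in\{(0,1),(1,0),(1,1),(2,0),(3,0)\}$, and $E^2_{0,2}=H_2(H,\Z)=0$. The differentials $d^1_{1,1},d^1_{2,1}$ involve only $\varphi$ modulo $[H,H]$, together with $\partial^Q$. The transgressions $d^2_{2,0}$ and $d^2_{3,0}$ are obtained by lifting a cycle of $\widetilde\partial^Q_2$ or $\widetilde\partial^Q_3$ to $P_2$ or $P_3$ and applying $\widetilde\partial$. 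Their values only involve the words $s(u,v)$ and $\varphi$, read in $H^\ab$, and for $d^2_{3,0}$ read in $\Z[B]\otimes H^\ab$. This justifies discarding the degree-$0$ contributions in Step~4 of Algorithm~\ref{alg:cap}.

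Finally, the only nonzero terms on the diagonal $p+q=2$ are $E^\infty_{1,1}=E^2_{1,1}/\im d^2_{3,0}$ and $E^\infty_{2,0}=\ker d^2_{2,0}$. These are computable by Smith normal form from the finite matrices $M_2$ and $M_3$. The extension \eqref{Eq:SES-algorithm} is resolved by computing $\widetilde\partial_2$ and $\widetilde\partial_3$ on the finitely many degree-$\ge1$ generators modulo degree $0$. That is exactly the homology of the quotient complex $P_\bullet/\fg_0P_\bullet$ tensored with $\Z$ in degree $2$, corrected by $H^\ab_Q$ in degree $1$, which is again a finite integer-matrix computation.

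I expect the main obstacle to be showing that the degree-$0$ parts can truly be dropped. In particular, one must check that no term of $\partial_3$ on $V^{(3)}_2\cup V^{(3)}_3$ produces a degree-$1$ or degree-$2$ contribution that depends on the unknown $\cR_H$. I would prove this by induction along Kobayashi's order $\preceq$. The claim is that every occurrence of $i_2((h_1)\circ w)$ or $\partial_2((h_1,h_2)\circ w)$ lies in $\fg_0P_2$.
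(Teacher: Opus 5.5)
Your route is the paper's: the Hermiller--Meier system $\cR_G$, the truncated filtration of Table~\ref{Tab:filtration}, the low-degree identifications of $E^2$, and the transgressions read off the lower-left blocks of $M_2$ and $M_3$. Your observation that $\varphi$ and the defect words enter only through $H^\ab$ is also correct. However, two steps are wrong as written.

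\textbf{Filtration preservation.} Your argument does not work, because $i_n$ \emph{can} raise the number of $B$-entries. The entry it appends is a prefix of the coefficient. If the last entry is a $B$-word and the coefficient begins with a $B$-letter, that prefix is a $B$-word. For example, $i_2((q_1)\circ q_2)=(q_1,q_2)$ for $(q_1,q_2)\in V^{(2)}_2$. This degree-$2$ term genuinely arises from the degree-$1$ input inside $i_2\partial_2((q_1,q_2)\circ a)$ when computing $\partial_3(q_1,q_2,a)$. So preservation has to be verified on $\partial$ itself, not deduced from a property of $i$. With the truncated filtration, only four cases need checking:
\begin{itemize}
\item $\partial_2$ on $(h_1,h_2)$ and on $(q,a)$;
\item $\partial_3$ on $(h_1,h_2,h_3)$ and on $(q,h_1,h_2)$.
\end{itemize}
In these cases the coefficients fed into $i_2$ on a $(q)$-entry are $\overline{h_1h_2}$ and $h_2$, which lie in $A^\ast$. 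Hence only $A$-entries are appended modulo $\fg_0P_2$. This is exactly the explicit computation in the appendix.

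\textbf{The extension step.} Here the correction term is wrong. Put $C=P_\bullet\otimes_{\Z G}\Z$; the subcomplex $\fg_0C$ is the Kobayashi complex of $\cR_H$. The long exact sequence of the pair, together with $H_2(H,\Z)=0$, gives
\[
H_2(G,\Z)\cong\ker\bigl(\delta\colon H_2(C/\fg_0C)\to H_1(\fg_0C)=H^\ab\bigr).
\]
So the target of $\delta$ is $H^\ab$, not $H^\ab_Q$. The classes $(q,a)$ are cycles of $C/\fg_0C$ with $\delta(q,a)=[a]_\ab-q\cdot[a]_\ab$, which vanishes in $H^\ab_Q$ but not in $H^\ab$. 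For $\Gamma_{\mathcal O}$ we have $H^\ab_Q=0$, while $\delta$ is onto $H^\ab\cong\Z_5^2$. Your recipe would therefore return a group of order $50$ instead of $\Z/2\Z$.

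Also, "finitely many degree-$\ge1$ generators" is inaccurate. The generators $(q,h_1,h_2)$ of $P_3$ are infinite in number whenever $\cR_H$ is. Modulo $\fg_0$, though, they contribute exactly the relations of $H^\ab$ in each $(q)$-component, so they can be replaced by a finite presentation of $H^\ab$.

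Once corrected, your quotient-complex formulation is actually cleaner than the paper's. It yields $H_2(G,\Z)$, extension class included, directly from $M_2$, $M_3$ and $H^\ab$. It needs neither the spectral sequence nor the filtration in degrees $1$ and $2$, whereas the paper only sketches the extension problem.
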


If $M$ is a hyperbolic rational homology $3$-sphere, then by the Mostow--Prasad Rigidity Theorem it follows that $\Isom(M)$ is a finite group acting by outer automorphisms on $\pi_1(M)$. Since $M$ is a rational homology $3$-sphere, we have $H_2(\pi_1(M), \Z) = 0$ by Poincaré duality, and $\pi_1(M)$ is finitely presented. Every finite group admits a finite complete rewriting system (e.g.\ its multiplication table), and the section map and conjugation action are effectively computable from the finite presentation of the extension. Thus Theorem~\ref{Thm:main-thm-h2-G} applies to every extension of $\pi_1(M)$ by a subgroup $Q \leq \Isom(M)$.

\begin{corollary}
Let $M$ be a closed hyperbolic rational homology $3$-sphere. Then for every $Q \leq \Isom(M)$ the group $H_2(\pi^\orb_1(M / Q), \Z)$ is effectively computable. 
\end{corollary}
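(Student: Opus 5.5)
The corollary follows by verifying the hypotheses of Theorem~\ref{Thm:main-thm-h2-G} for the extension
\[
1 \to \pi_1(M) \to \pi_1^\orb(M/Q) \to Q \to 1,
\]
where we take $H = \pi_1(M)$ and $G = \pi_1^\orb(M/Q)$. First, this short exact sequence exists. $M$ is closed hyperbolic, so $\Isom(M)$ is finite by Mostow--Prasad rigidity. Writing $M = \mathbb{H}^3/\pi_1(M)$, the group $G$ is the preimage in $\Isom(\mathbb{H}^3)$ of $Q$ under the map $N(\pi_1(M)) \to \Isom(M)$. It is therefore a cocompact lattice containing $\pi_1(M)$ as a normal subgroup with quotient $Q$.

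Next I check the hypotheses on $H$. $M$ is a closed $3$-manifold, so $\pi_1(M)$ is finitely presented, for instance from a triangulation. Since $M$ is a closed orientable aspherical $3$-manifold, Poincaré duality gives $H_2(\pi_1(M),\Z) \cong H_2(M,\Z) \cong H^1(M,\Z) \cong \Hom(H_1(M),\Z)$. This vanishes because $M$ is a rational homology sphere, so $H_1(M,\Z)$ is finite. Hence $H_2(H,\Z)=0$.

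For the three hypotheses on the extension: $Q$ is finite, so its multiplication table on the alphabet $Q\setminus\{1\}$ gives a finite complete rewriting system (the rules $xy \to \overline{xy}$ are length-reducing, hence terminating, and confluent by associativity). For the section and the action, I would fix a finite presentation of $G$ together with explicit words for the generators of $\pi_1(M)$ and coset representatives $s(q)$ of each $q \in Q$. Such data can be obtained effectively, for example by Reidemeister--Schreier from a finite presentation of $G$ and the finite quotient map $G \to Q$. Then the defect words $s(u,v)$ and the conjugates $\varphi_b(a) = s(b)\,a\,s(b)^{-1}$ are explicit words in $G$ lying in $H$. These must be rewritten as words in the generators $A$ of $H$, which Reidemeister--Schreier rewriting does algorithmically. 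Only their images in $H^\ab$ are needed, and $H^\ab$ is computable by Smith normal form from the finite presentation of $H$.

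The main obstacle is making precise what ``given'' means for $Q \le \Isom(M)$, that is, in what form the extension data is supplied. I would settle this by taking as input a finite presentation of $G$ together with the epimorphism $G \to Q$. Everything above is then effective, and Theorem~\ref{Thm:main-thm-h2-G} (via the Algorithm) outputs $H_2(G,\Z)$.
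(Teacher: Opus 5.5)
Your proposal is correct and follows the same route as the paper, which likewise just checks the hypotheses of Theorem~\ref{Thm:main-thm-h2-G}: finiteness of $\Isom(M)$ via Mostow--Prasad, $H_2(\pi_1(M),\Z)=0$ via Poincaré duality, the multiplication table of $Q$ as a finite complete rewriting system, and computability of the section and the action from a finite presentation of the extension. Your Reidemeister--Schreier argument and your explicit input convention (a finite presentation of $G$ together with $G \to Q$) make precise what the paper leaves implicit when it says these maps are ``effectively computable from the finite presentation of the extension.''
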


As an example, we are able to effectively compute the second homology group of the fundamental group of every orbifold quotient of the Weeks manifold $\mathcal{W}$. This is the goal of the next section, and it will lead us to answering the two questions of Bridson \& Reid from the introduction: what is $H_2(\Gamma_{\mathcal{O}}, \Z)$, and can we find infinitely many Grothendieck pairs $P_\lambda \hookrightarrow \Gamma_{\mathcal{O}}^1 \times \Gamma_{\mathcal{O}}^1$?

\begin{remark}
Throughout this section, we have dealt with \textit{trivial} coefficients, i.e.\ computing $H_2(G, \Z)$ taking $\Z$ as a $G$-module with trivial action. But this is not the entire story: owing to the explicit nature of the computation of our chain maps $\partial_2, \partial_3$, we can work in much more generality, and as long as we start with a $G$-module $M$ for which the action is computable, then little else would be required to compute $H_2(G, M)$, or at least deciding whether or not $H_2(G, M)$ is trivial or not, as long as we have $H_2(H, M) = 0$. However, stating the precise conditions for this computability would be outside the scope of this paper. 
\end{remark}

\section{The Weeks manifold and its orbifolds}\label{Sec:Weeks-manifold}

\noindent In this section, we will prove several results about extensions of the fundamental group of the Weeks manifold $\mathcal{W}$, and answer the two questions of Bridson \& Reid from the introduction. By an appropriate choice of extension, we will also construct uncountably many pairwise non-isomorphic groups $P_\lambda$ and embeddings $P_\lambda \hookrightarrow \Gamma_{\mathcal{O}}^1 \times \Gamma_{\mathcal{O}}^1$ that induce an isomorphism of profinite completions, where $\Gamma_{\mathcal{O}}^1$ is the group of units in a maximal order in the quaternion algebra of the Weeks manifold (see below for definitions). As a byproduct of our investigations, we will also show that every lattice $\Gamma < \PSL_2(\mathbb{C})$ with $\pi_1(\mathcal{W}) \leq \Gamma \leq \Gamma_\mathcal{O}$ is (absolutely) profinitely rigid, where $\Gamma_{\mathcal{O}}$ is the normalizer in $\PSL_2(\C)$ of $\pi_1(\mathcal{W})$, 

\subsection{Background}\label{Subsec:Weeks-background}

We briefly recall the necessary results about the Weeks manifold $\mathcal{W}$. Significantly more information can be found in the book by Maclachlan \& Reid \cite[\S4.8.3]{MaclachlanReidBook}. Recall that the set of volumes of closed orientable hyperbolic $3$-manifolds is a well-ordered subset of $\mathbb{R}$ by a result due to Thurston \& Jørgensen \cite{Thurston}, and that furthermore there are only finitely many such manifolds realizing any given volume. For the smallest volume $v_0$ (approximately $0.9427...$), it turns out there is a unique such manifold $\mathcal{W}$ with $\mathrm{Vol}(\mathcal{W}) = v_0$, the \textit{Weeks manifold}. It was constructed by Weeks \cite{Weeks1985} and independently by Matveev \& Fomenko \cite{Matveev1988}, and is obtained by performing $(-5,1)$-surgery on one component of the Whitehead link, and $(5,2)$-surgery on the other. That $\mathcal{W}$ is indeed the unique manifold of volume $v_0$ was proved in 2009 by Gabai, Meyerhoff \& Milley \cite{Gabai2009}. The fundamental group is given by
\begin{align}\label{Eq:weeks-presentation}
\Gamma_\mathcal{W} = \pi_1(\mathcal{W}) = \pres{}{a,b}{a^2b^2a^2 = ba^{-1}b, \: b^2a^2b^2 = ab^{-1}a}
\end{align}
In particular, we have $H_1(\Gamma_\mathcal{W}, \Z) = \Z_5 \oplus \Z_5$. We furthermore have that $\mathcal{W}$ is an arithmetic manifold, with invariant trace field $\Q[\theta]$ where $\theta^3-\theta^2+1=0$, see also \cite{Chinburg2001}. Its associated quaternion algebra will be denoted $B_\mathcal{W}$, and choosing a maximal order $\mathcal{O}$ in $B_\mathcal{W}$ we let the group of units of this order be denoted $\Gamma_\mathcal{O}^1$. By \cite{Mednykh1998}, the orbifold $\mathbb{H}^3 / \Gamma_{\mathcal{O}}^1$ can be obtained by performing $(3,0)$-Dehn surgery on the knot $5_2$. The normalizer of $\Gamma_{\mathcal{O}}^1$ in $\PSL_2(\mathbb{C})$ is denoted $\Gamma_\mathcal{O}$, and arises as the extension of $\Gamma_\mathcal{W}$ by its full isometry group, which we now describe.

It is well-known (see e.g.\ \cite{Mednykh1998, Hodgson1994}; it can also be verified in \texttt{SnapPy}) that $\Isom(\mathcal{W}) \cong D_6$, the dihedral group on six points. We shall as in \cite{Mednykh1998} let it be defined by the Coxeter presentation
\[
\Isom(\mathcal{W}) = \pres{}{r, s, t}{r^2 = s^2 = t^2 = (rs)^2=(rt)^2= (st)^3 = 1}.
\]
By the Mostow--Prasad rigidity theorem, the isometries $r, s, t$ correspond to outer automorphisms $R, S, T$, respectively, acting on the generators of $\Gamma_\mathcal{W}$, in this case by
\begin{align}\label{Eq:action-on-weeks}
\begin{cases*}
RaR^{-1} = a^{-1} & \\
RbR^{-1} = b^{-1} & 
\end{cases*} \quad \begin{cases*}
SaS^{-1} = b & \\
SbS^{-1} = a & 
\end{cases*} \quad \begin{cases*} 
TaT^{-1} = a & \\
TbT^{-1} = a^{-1}b^{-1} & 
\end{cases*}
\end{align}
and in particular that 
\[
R^2 = S^2 = (RS)^2 = (RT)^2 = (ST)^3 = 1, \quad T^2 = a^{-1}
\]
see \cite[p. 52]{Mednykh1998}. Let $Q \leq \Isom(\mathcal{W})$, and let $\mathcal{W}/Q$ be the corresponding orbifold. Then we say that $\mathcal{W}/Q$ is a \textit{Weeks orbifold}, and its orbifold fundamental group will be called a \textit{Weeks orbifold group}. By rigidity, $Q_1, Q_2 \leq \Isom(\mathcal{W})$ are conjugate if and only if the corresponding Weeks orbifolds are isometric if and only if the corresponding Weeks orbifold groups are isomorphic. Thus the lattice of conjugacy classes of subgroups of $D_6$ is the same as the lattice of Weeks orbifold groups; we have drawn this out in Figure~\ref{Fig:Weeks-lattice}.

\begin{figure}
\[\begin{tikzcd}[scale cd=0.80]
	1 &&& \begin{array}{c} {\langle 1 \rangle}^{\mathbb{Z}_5\oplus\mathbb{Z}_5}_{1} \\ {\footnotesize \text{$\mathcal{W}$}} \end{array} \\
	2 & \begin{array}{c} \langle R \rangle_{C_2}^{\mathbb{Z}_2}\\ {\footnotesize \text{$9_{49}(2)$}} \end{array} & \begin{array}{c} \langle RS \rangle_{C_2}^{\mathbb{Z}_2\oplus\mathbb{Z}_5}\\ {\footnotesize \text{$L_{5,-2}(2)$}} \end{array} & \begin{array}{c} \langle S \rangle_{C_2}^{\mathbb{Z}_2\oplus\mathbb{Z}_5} \\ {\footnotesize \text{$L_{5,-1}(2)$}} \end{array} \\
	3 &&&&& \begin{array}{c} \langle ST \rangle_{C_3}^{\mathbb{Z}_3} \\ {\footnotesize \text{$5_2(3) \xrightarrow{\pi_1} \Gamma_\mathcal{O}^1$}} \end{array} \\
	4 && \begin{array}{c} \langle R,S \rangle_{C_2^2}^{\mathbb{Z}_2^2}\\ {\footnotesize \text{$\Theta_3(2,2,2)$}} \end{array} \\
	6 & \begin{array}{c} \langle R,ST \rangle_{C_6}^{\mathbb{Z}_2\oplus\mathbb{Z}_3} \\ {\footnotesize \text{$7_1^2(2,3)$}} \end{array} && \begin{array}{c} \langle RS,ST \rangle_{S_3}^{\mathbb{Z}_2} \\ {\footnotesize \text{$\Theta_2(2,2,3)$}} \end{array} && \begin{array}{c} \langle S,T \rangle_{S_3}^{\mathbb{Z}_2} \\ {\footnotesize \text{$\Theta_1(2,2,3)$}} \end{array} \\
	12 &&&  \begin{array}{c} {\langle R,S,T \rangle_{D_6}^{\mathbb{Z}_2^2}} \\ {\footnotesize \text{$T(2,2,2,2,2,3)\xrightarrow{\pi_1}\Gamma_\mathcal{O}$}} \end{array}
	\arrow[equals, from=1-4, to=2-2]
	\arrow[no head, from=1-4, to=2-3]
	\arrow[no head, from=1-4, to=2-4]
	\arrow[equals, from=1-4, to=3-6]
	\arrow[no head, from=2-2, to=4-3]
	\arrow[no head, from=2-2, to=5-2]
	\arrow[no head, from=2-3, to=4-3]
	\arrow[no head, from=2-3, to=5-4]
	\arrow[no head, from=2-4, to=4-3]
	\arrow[no head, from=2-4, to=5-6]
	\arrow[no head, from=3-6, to=5-2]
	\arrow[equals, from=3-6, to=5-4]
	\arrow[equals, from=3-6, to=5-6]
	\arrow[no head, from=4-3, to=6-4]
	\arrow[no head, from=5-2, to=6-4]
	\arrow[no head, from=5-4, to=6-4]
	\arrow[no head, from=5-6, to=6-4]
\end{tikzcd}\]
\caption{The subgroup lattice of the full group $\Isom(\mathcal{W}) \cong D_6$ of isometries of the Weeks manifold, cf.\ \cite[Fig.~2]{Mednykh1998}. The leftmost column is the order of the subgroup. The subscripts indicate the isomorphism class of the subgroup, and the superscripts indicate the first homology of the corresponding orbifold quotient. The name of each orbifold quotient from \cite{Mednykh1998} is listed beneath the corresponding subgroup. Double lines indicate that the induced embedding of fundamental groups is via the derived subgroup; thus if we e.g.\ set $\Theta=\pi_1 (\Theta_2(2,2,3))$, then we can read that $[\Theta, \Theta]=\pi_1(5_2(3)) := \Gamma_\mathcal{O}^1$, see Proposition~\ref{Prop:Derived-subgroups-of-Thetas}.}
\label{Fig:Weeks-lattice}
\end{figure}

\subsection{Schur multipliers of Weeks orbifolds}

In \cite{BridsonReidC}, Bridson \& Reid proved that in the chain $\Gamma_\mathcal{W} <_3 \Gamma^1_\mathcal{O} <_4 \Gamma_{\mathcal{O}}$ we have $[\Gamma_{\mathcal{O}}, \Gamma_{\mathcal{O}}] = \Gamma^1_{\mathcal{O}}$ and $[\Gamma_{\mathcal{O}}^1, \Gamma_{\mathcal{O}}^1] = \Gamma_\mathcal{W}$. Furthermore, it is known that $\Gamma_{\mathcal{O}}$ is the normalizer in $\PSL_2(\mathbb{C})$ of $\Gamma_\mathcal{O}^1$. They also prove $H_2(\Gamma_{\mathcal{O}}^1, \Z) = 0$, and hence can apply their Theorem~\ref{ThmC-BridsonReid} to deduce the existence of Grothendieck pairs in $\Gamma_\mathcal{W}\times \Gamma_\mathcal{W}$. However, they are unable to apply their Theorem~\ref{ThmC-BridsonReid} to obtain Grothendieck pairs in $\Gamma_{\mathcal{O}}^1 \times \Gamma_{\mathcal{O}}^1$, since they ``do not know that $H_2(\Gamma_\mathcal{O}, \Z) =0$''. We first prove that, unfortunately, this cannot be arranged; namely, the second homology group of $\Gamma_{\mathcal{O}}$ is cyclic of order $2$. 

\begin{proposition}\label{Prop:sad-news}
With notation as above, we have $H_2(\Gamma_\mathcal{O},\Z) \cong \Z_2$.
\end{proposition}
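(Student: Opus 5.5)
Our plan is to apply the Lyndon--Hochschild--Serre spectral sequence to $1 \to \Gamma_\mathcal{W} \to \Gamma_\mathcal{O} \to D_6 \to 1$. Lemma~\ref{Lem:easy-SES} does not apply directly, since $|D_6| = 12$ and $|\Gamma_\mathcal{W}^\ab| = 25$ are coprime only for the $5$-part, so the argument has to be adapted slightly. Since $\mathcal{W}$ is a rational homology sphere, $H_2(\Gamma_\mathcal{W},\Z)=0$ and $E^2_{p,2}=0$. The relevant terms on the diagonal $p+q=2$ are $E^2_{1,1}=H_1(D_6, \Z_5^2)$ and $E^2_{2,0}=H_2(D_6,\Z)$.

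First, $\gcd(12,25)=1$. Hence, by the transfer argument exactly as in the proof of Lemma~\ref{Lem:easy-SES}, $H_p(D_6, \Gamma_\mathcal{W}^\ab)=0$ for $p>0$. In particular $E^2_{1,1}=0$ and $E^2_{3,0}\to E^2_{1,1}$ is zero. So the coprimality hypothesis in Lemma~\ref{Lem:easy-SES} is in fact satisfied: $\gcd(|Q|,|\pi_1(M)^\ab|)=\gcd(12,25)=1$. The lemma therefore applies verbatim with $M=\mathcal{W}$ and $Q=\Isom(\mathcal{W})\cong D_6$, and gives $H_2(\Gamma_\mathcal{O},\Z)\cong H_2(D_6,\Z)$. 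This also yields the more general claim in the introduction for every $Q\le \Isom(\mathcal{W})$, since every order dividing $12$ is coprime to $25$.

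It remains to compute the Schur multiplier of $D_6$, the dihedral group of order $12$. We plan to use the decomposition $D_6\cong S_3\times C_2$ together with the Künneth formula:
\[
H_2(S_3\times C_2)\cong H_2(S_3)\oplus H_2(C_2)\oplus (H_1(S_3)\otimes H_1(C_2)) \cong 0\oplus 0\oplus (\Z_2\otimes\Z_2)\cong \Z_2 .
\]
The Tor term from $H_0$ and $H_1$ vanishes since $H_0=\Z$ is free. As an alternative check, we can use the standard fact that the Schur multiplier of $D_{2n}$ is $\Z_2$ for $n$ even; here $D_6$ has order $12$ with $n=6$. If one prefers, the algorithm of \S\ref{Subsec:algorithm-description} gives an independent verification. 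In that case the transgression $\tau$ lands in $(\Z_5^2)_{D_6}$, which is killed by both $5$ and $12$ and so is trivial.

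The only step needing care is confirming that $\Gamma_\mathcal{O}$ is indeed the extension of $\Gamma_\mathcal{W}$ by the full $\Isom(\mathcal{W})\cong D_6$, as recorded in \S\ref{Subsec:Weeks-background}. Given that identification, the proof is short and we expect no real obstacle.
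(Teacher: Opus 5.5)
Your argument is correct and is essentially the paper's: apply Lemma~\ref{Lem:easy-SES} using $\gcd(12,25)=1$, note that $H_2(D_6,\Z)\cong\Z_2$ (your K\"unneth computation via $D_6\cong S_3\times C_2$ is fine), and observe that the transgression is zero because it maps a $2$-group to a $5$-group. Two small fixes to the write-up. First, delete the opening claim that the lemma ``does not apply directly'', since you then correctly observe that it does. Second, don't justify the vanishing of $(\Z_5^2)_{D_6}$ by saying it is killed by $12$: the transfer only annihilates $H_p(Q,-)$ for $p>0$, not the coinvariants $H_0$. Those coinvariants vanish by direct computation, and in any case $\tau=0$ already follows from the $2$-group/$5$-group observation.
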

\begin{proof}
Let $Q = \Isom(\mathcal{W})$. Then the group $\Gamma_\mathcal{O}$ is the extension $1 \to \Gamma_\mathcal{W} \rightarrow \Gamma_\mathcal{O} \rightarrow Q \to 1$ with action as in \S\ref{Subsec:Weeks-background}. However, $H_1(\Gamma_\mathcal{W}, \Z) = \Z_5 \oplus \Z_5$, with the two generators being images of $a$ and $b$ from \eqref{Eq:weeks-presentation}. Since $\gcd(12,25)=1$, we can now apply Lemma~\ref{Lem:easy-SES}, which yields the exact sequence 
\begin{equation}\label{Eq:weeks-maximal-h2-exact-sequence}
0 \longrightarrow H_2(\Gamma_{\mathcal{O}}, \Z) \longrightarrow \Z_2 \xlongrightarrow{\tau} \pi_1(\mathcal{W})^\ab_Q
\end{equation}
as $H_2(Q,\Z) = \Z_2$ since $Q$ is the dihedral group of order $12$. Now the transgression $\tau$ is necessarily $0$, being a map from a $2$-group to a $5$-group; alternatively, using \eqref{Eq:action-on-weeks} it is not difficult to see that $\pi_1(\mathcal{W})^\ab_Q = 0$ (see also \S\ref{Subsec:computations-for-Weeks}). Thus \eqref{Eq:weeks-maximal-h2-exact-sequence} yields the isomorphism $H_2(\Gamma_{\mathcal{O}}, \Z) \cong \Z_2$. 
\end{proof}

In \S\ref{Subsec:computations-for-Weeks}, we will show how our algorithm from \S\ref{Subsec:algorithm-description} can be used to mechanically deduce the same result. 

Before this, we turn towards Bridson \& Reid's goal of applying Theorem~\ref{ThmC-BridsonReid} to produce infinitely many Grothendieck pairs in $\Gamma_{\mathcal{O}}^1 \times \Gamma_{\mathcal{O}}^1$. As we shall prove, this can still be realized even though $H_2(\Gamma_{\mathcal{O}}, \Z)\neq 0$. Indeed, we must simply use another extension than $\Gamma_{\mathcal{O}}$. To do this, we begin by showing that all other, except one, lattices between $\Gamma_\mathcal{W}$ and its normalizer have trivial second homology. We use the names of the lattices as in Figure~\ref{Fig:Weeks-lattice}, and these names are due to Mednykh \& Vesnin \cite{Mednykh1998}.

\begin{proposition}\label{Prop:all-smaller-Weeks-orbifolds-trivial-H2}
Let $\Delta$ be any lattice with $\Gamma_\mathcal{W} \leq \Delta \leq \Gamma_{\mathcal{O}}$. Then 
\[
H_2(\Delta, \Z) = 
\begin{cases*}
\Z / 2 \Z, & $\Delta \in \{ \Gamma_{\mathcal{O}}, \Theta_3(2,2,2) \}$ \\ 
0, & otherwise.
\end{cases*}
\]
\end{proposition}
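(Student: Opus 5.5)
Every such $\Delta$ is an extension $1 \to \Gamma_{\mathcal{W}} \to \Delta \to Q \to 1$ with $Q \leq \Isom(\mathcal{W}) \cong D_6$, so $|Q|$ divides $12$. Recall that $\Gamma_{\mathcal{W}}^{\ab} \cong \Z_5 \oplus \Z_5$ and that $\mathcal{W}$ is a hyperbolic rational homology $3$-sphere. Since $\gcd(|Q|, 25) = 1$ for every such $Q$, Lemma~\ref{Lem:easy-SES} applies uniformly and gives
\[
H_2(\Delta, \Z) \cong H_2(Q, \Z).
\]
This is the same argument as in Proposition~\ref{Prop:sad-news}, where the transgression vanishes because it is a homomorphism from a group of order dividing a power of $2$ and $3$ into a $5$-group. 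The proposition is therefore reduced to computing the Schur multipliers of the ten conjugacy classes of subgroups of $D_6$ in Figure~\ref{Fig:Weeks-lattice}.

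I would then go through the list. The trivial group and the cyclic groups, namely $\langle 1 \rangle$, the three classes of order $2$ ($\langle R \rangle$, $\langle RS \rangle$, $\langle S \rangle$), $\langle ST \rangle \cong C_3$ and $\langle R, ST \rangle \cong C_6$, all have $H_2 = 0$. The two copies of $S_3$, $\langle RS, ST \rangle$ and $\langle S, T \rangle$, have $H_2(S_3, \Z) = 0$. One way to see this is that the Sylow $3$-subgroup is cyclic, and the Sylow $2$-subgroup is cyclic of order $2$, so by the transfer both primary parts vanish. The Klein four-group $\langle R, S \rangle \cong C_2^2$ has $H_2 = \Z/2\Z$; this is exactly the running example computed via the Kobayashi resolution in \S\ref{Subsec:Kobayashi-back}. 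Finally, $D_6$ itself has $H_2 = \Z/2\Z$, as already used in Proposition~\ref{Prop:sad-news}.

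The only thing left to check is which orbifolds carry these exceptional subgroups. From Figure~\ref{Fig:Weeks-lattice}, $\langle R, S \rangle$ is the subgroup for $\Theta_3(2,2,2)$, and the full group $\langle R, S, T \rangle$ gives $\Gamma_{\mathcal{O}}$. These are precisely the two exceptional cases in the statement.

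The only step needing care is the identification of the isomorphism types of the subgroups, in particular that $\langle R, ST \rangle$ is cyclic of order $6$ rather than $S_3$. This follows because $R$ commutes with both $S$ and $T$ (from $(RS)^2 = (RT)^2 = R^2 = S^2 = T^2 = 1$), so $R$ is central and $\langle R, ST \rangle \cong C_2 \times C_3$. Everything else is standard.

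As an alternative, one could confirm the two nonzero cases mechanically with the algorithm of \S\ref{Subsec:algorithm-description}, but the lemma makes this unnecessary.
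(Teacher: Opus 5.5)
Your proposal is correct and follows the paper's proof: since every $Q \leq D_6$ has order coprime to $25 = |\Gamma_{\mathcal{W}}^{\ab}|$, Lemma~\ref{Lem:easy-SES} gives $H_2(\Delta,\Z) \cong H_2(Q,\Z)$ uniformly. What remains is reading off the Schur multipliers of the subgroups in Figure~\ref{Fig:Weeks-lattice}. Your extra checks, namely that $R$ is central so $\langle R, ST\rangle \cong C_6$, and the transfer argument for $H_2(S_3,\Z)=0$, fill in details the paper leaves implicit.
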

\begin{proof}
Note that for every subgroup $Q \leq D_6 \cong \Isom(\mathcal{W})$ of the dihedral group of order $12$, we have $H_2(Q, \Z) = 0$ when $Q$ is one of the groups $C_2, C_3, C_6$, and $S_3$; and $H_2(Q, \Z) = \Z / 2\Z$ if $Q = D_6$ or $Q = C_2 \times C_2$ (see Figure~\ref{Fig:Weeks-lattice} for the lattice of subgroup). Furthermore, all such subgroups have order coprime with $25 = |\pi_1(\mathcal{W})^\ab|$. Thus by Lemma~\ref{Lem:easy-SES}, the claim follows. 
\end{proof}

We shall now use this result as described above.

\subsection{Infinitely many Grothendieck pairs in $\Gamma_{\mathcal{O}}^1 \times \Gamma_{\mathcal{O}}^1$}\label{Subsec:Groth-pairs-Weeks}

As we mentioned above, Bridson \& Reid proved that while the derived subgroup of $\Gamma_\mathcal{O}$ is $\Gamma_\mathcal{O}^1$, we discovered by Proposition~\ref{Prop:sad-news} that their Theorem~\ref{ThmC-BridsonReid} cannot be applied to construct Grothendieck pairs in $\Gamma_\mathcal{O}^1 \times \Gamma_\mathcal{O}^1$, since the Schur multiplier is non-trivial. However, we will now use other subgroups in the subgroup lattice between $\Gamma_\mathcal{W}$ and its normalizer in $\PSL_2(\C)$ to successfully apply Theorem~\ref{ThmC-BridsonReid}.

Let $\Gamma_1^\Theta = \pi_1(\Theta_1(2,2,3))$ and let $\Gamma_2^\Theta = \pi_1(\Theta_2(2,2,3))$ as in Figure~\ref{Fig:Weeks-lattice}. These groups correspond, respectively, to the extensions of $\Gamma_\mathcal{W}$ by the two non-conjugate embeddings of the symmetric group $S_3$ in $D_6$, generated by  $\langle S, T \rangle$ resp.\ $\langle RS, ST \rangle$. We now prove that either of these groups can be used to construct the desired Grothendieck pairs. We first prove that their derived subgroups are as desired. 

\begin{proposition}\label{Prop:Derived-subgroups-of-Thetas}
We have $[\Gamma_1^\Theta,\Gamma_1^\Theta] = [\Gamma_2^\Theta,\Gamma_2^\Theta] = \Gamma_\mathcal{O}^1$. 
\end{proposition}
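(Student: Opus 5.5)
The plan is to work entirely at the level of the quotient by $\Gamma_{\mathcal{W}}$, and to use the Bridson--Reid fact $[\Gamma_{\mathcal{O}}^1,\Gamma_{\mathcal{O}}^1]=\Gamma_{\mathcal{W}}$ to pull $\Gamma_{\mathcal{W}}$ into the derived subgroup. Fix $i\in\{1,2\}$ and write $\Gamma=\Gamma_i^\Theta$. Let $p\colon \Gamma\to \Gamma/\Gamma_{\mathcal{W}}\cong S_3$ be the quotient map, where $S_3$ is $\langle S,T\rangle$ resp.\ $\langle RS,ST\rangle$ inside $D_6\cong\Isom(\mathcal{W})$.

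First I will identify $\Gamma_{\mathcal{O}}^1$ inside $\Gamma$. The dihedral group $D_6$ of order $12$ has a normal, hence unique, Sylow $3$-subgroup, namely $\langle ST\rangle\cong C_3$. By Figure~\ref{Fig:Weeks-lattice} and the correspondence between subgroups of $\Isom(\mathcal{W})$ and lattices containing $\Gamma_{\mathcal{W}}$, the group $\Gamma_{\mathcal{O}}^1$ is the preimage of $\langle ST\rangle$. Both copies of $S_3$ contain this unique $C_3$; for the second this can be checked directly, since $ST\in\langle RS,ST\rangle$. Hence in both cases $\Gamma_{\mathcal{W}}<\Gamma_{\mathcal{O}}^1<_2\Gamma$, and $p(\Gamma_{\mathcal{O}}^1)=C_3=[S_3,S_3]$.

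Next come two inclusions.
\begin{enumerate}
\item Since $p$ is surjective, $p([\Gamma,\Gamma])=[S_3,S_3]=C_3$. Therefore $[\Gamma,\Gamma]\leq p^{-1}(C_3)=\Gamma_{\mathcal{O}}^1$.
\item Since $\Gamma_{\mathcal{O}}^1\leq\Gamma$, the Bridson--Reid result gives $\Gamma_{\mathcal{W}}=[\Gamma_{\mathcal{O}}^1,\Gamma_{\mathcal{O}}^1]\leq[\Gamma,\Gamma]$.
\end{enumerate}
Thus $[\Gamma,\Gamma]$ is a subgroup of $\Gamma_{\mathcal{O}}^1$ that contains $\Gamma_{\mathcal{W}}$ and surjects onto $\Gamma_{\mathcal{O}}^1/\Gamma_{\mathcal{W}}\cong C_3$. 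It follows that $[\Gamma,\Gamma]=\Gamma_{\mathcal{O}}^1$.

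As a consistency check, the resulting index $[\Gamma:[\Gamma,\Gamma]]=2$ agrees with $H_1(\Gamma_i^\Theta,\Z)\cong\Z/2\Z$ recorded in Figure~\ref{Fig:Weeks-lattice}. Conversely, one could argue from that abelianization plus item (2) alone: $[\Gamma,\Gamma]$ has index $2$ and contains $\Gamma_{\mathcal{W}}$, so it is the preimage of an index-$2$ subgroup of $S_3$. That subgroup must be $C_3$, giving $\Gamma_{\mathcal{O}}^1$ again.

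The main obstacle is the bookkeeping behind the first step: making sure that $\Gamma_{\mathcal{O}}^1$ really is the preimage of $\langle ST\rangle$ under the identification $\Gamma_{\mathcal{O}}/\Gamma_{\mathcal{W}}\cong D_6$ given by \eqref{Eq:action-on-weeks}, and that this $C_3$ lies in both embedded copies of $S_3$. I would settle this using uniqueness of the order-$3$ subgroup of $D_6$ together with the index count $\Gamma_{\mathcal{W}}<_3\Gamma_{\mathcal{O}}^1$. After that, the argument is purely group-theoretic.
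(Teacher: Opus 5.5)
Your proof is correct, but it takes a different route from the paper's.

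\textbf{The paper's route.} The paper never invokes $[\Gamma_{\mathcal{O}}^1,\Gamma_{\mathcal{O}}^1]=\Gamma_{\mathcal{W}}$ here; it argues through abelianizations. Since $\Gamma_{\mathcal{W}}^{\ab}\cong\Z_5\oplus\Z_5$ has odd order, every homomorphism $\Gamma_i^\Theta\to\Z_2$ kills $\Gamma_{\mathcal{W}}$ and factors through $S_3$, so $\Gamma_i^\Theta$ has a unique index-$2$ subgroup. It then combines three facts:
\begin{itemize}
\item this uniqueness;
\item $(\Gamma_i^\Theta)^{\ab}\cong\Z_2$, checked from the action of the generators on $a,b$ (essentially the vanishing of the coinvariants $(\Gamma_{\mathcal{W}}^{\ab})_{S_3}$);
\item $[\Gamma_i^\Theta:\Gamma_{\mathcal{O}}^1]=2$.
\end{itemize}
Together these force $[\Gamma_i^\Theta,\Gamma_i^\Theta]=\Gamma_{\mathcal{O}}^1$, and the second $S_3$ is declared analogous.

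\textbf{Your route.} You import the Bridson--Reid fact, which the paper quotes, to get $\Gamma_{\mathcal{W}}\le[\Gamma,\Gamma]$. After that everything is formal: a subgroup containing $\ker p$ is the full preimage of its image, and $p([\Gamma,\Gamma])=[S_3,S_3]$. Your identification of $\Gamma_{\mathcal{O}}^1$ as the preimage of the unique order-$3$ subgroup of $D_6$ is sound. That subgroup lies in every subgroup of order $6$ by Cauchy's theorem.

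\textbf{What each buys.} Your approach gains uniformity. Both copies of $S_3$ are handled at once, and the same two lines give $[\Gamma,\Gamma]=p^{-1}([Q,Q])$ for every $Q\le D_6$ containing $\langle ST\rangle$. For instance, this recovers $[\Gamma_{\mathcal{O}},\Gamma_{\mathcal{O}}]=\Gamma_{\mathcal{O}}^1$. It also makes $H_1(\Gamma_i^\Theta,\Z)\cong\Z/2\Z$ an output rather than an input. The paper's argument does not depend on the Bridson--Reid derived-subgroup computation and uses only the extension data: the odd order of $\Gamma_{\mathcal{W}}^{\ab}$ and the explicit action. The cost is an abelianization check for each $\Theta_i$.
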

\begin{proof}
Note that (as in Figure~\ref{Fig:Weeks-lattice}) the group $\Theta_1$ is the extension of $\Gamma_\mathcal{W}$ by the two isometries $S$ and $T$ acting by conjugation on the fundamental group, and by \eqref{Eq:action-on-weeks} we have $S^2 = (ST)^3 = 1$ and $T^2 = a^{-1}$. Since $\Gamma_\mathcal{W}^\ab \cong \Z_5 \oplus \Z_5$, any homomorphism $\varrho \colon \Theta_1 \to \Z_2$ necessarily kills $\Gamma_\mathcal{W}$. Hence since $(ST)^3 = 1$, the only possible non-trivial such homomorphism $\varrho$ is the map sending $S$ and $T$ to the generator of $\Z_2$. Hence $\Theta_1$ has only one subgroup of index $2$, and since $[\Theta_1 : \Gamma_\mathcal{O}^1] = 2$ and $\Theta_1^\ab \cong \Z_2$ (as is easily verified from the relations \eqref{Eq:action-on-weeks}), we thus conclude the statement of the proposition for $\Theta_1$. The claim for $\Theta_2$ is proved entirely analogously.
\end{proof}

Note that we can also conclude by Proposition~\ref{Prop:all-smaller-Weeks-orbifolds-trivial-H2} that
\[
H_2(\Gamma_1^\Theta, \Z) = H_2(\Gamma_2^\Theta, \Z) = 0.
\]
These facts ultimately stem from the fact that $H_2(S_3, \Z) = 0$. We are now in a position to apply Theorem~\ref{ThmC-BridsonReid}, and thus we can conclude: 

\begin{corollary}\label{Cor:Groth-pairs-inGamma1}
Let $\Gamma_\mathcal{O}^1$ be the group of units in a maximal order of the quaternion algebra $B_\mathcal{W}$ of the Weeks manifold. Then there exist uncountably many non-isomorphic groups $P_\lambda$ with embeddings $P_\lambda \hookrightarrow \Gamma_{\mathcal{O}}^1 \times \Gamma_{\mathcal{O}}^1$ that induce an isomorphism of profinite completions, and infinitely many of these $P_\lambda$ are finitely generated. 
\end{corollary}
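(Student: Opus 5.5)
The plan is to apply Corollary~\ref{Cor:ThmC-for-PSL2C} with $G = \Gamma_{\mathcal{O}}^1$ and $\Gamma = \Gamma_1^\Theta$; the group $\Gamma_2^\Theta$ would serve equally well. Its hypotheses are threefold: both groups are cocompact lattices in $\PSL_2(\C)$, $G = [\Gamma,\Gamma]$, and $H_2(\Gamma,\Z)=0$. Once these are checked, both conclusions of the corollary are exactly the claimed statement.

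First, cocompactness. The Weeks manifold $\mathcal{W}$ is closed, so $\Gamma_{\mathcal{W}}$ is a cocompact lattice. The groups $\Gamma_1^\Theta$ and $\Gamma_{\mathcal{O}}^1$ both contain $\Gamma_{\mathcal{W}}$ with finite index (namely $6$ and $3$) and are discrete, since they sit inside $\Gamma_{\mathcal{O}}$. Each is therefore a cocompact lattice in $\PSL_2(\C)$; concretely, these are the orbifold groups of the closed orbifolds $\Theta_1(2,2,3)$ and $5_2(3)$.

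Second, the derived subgroup. Proposition~\ref{Prop:Derived-subgroups-of-Thetas} gives $[\Gamma_1^\Theta,\Gamma_1^\Theta] = \Gamma_{\mathcal{O}}^1$ directly. Third, the homology. Proposition~\ref{Prop:all-smaller-Weeks-orbifolds-trivial-H2}, which rests on Lemma~\ref{Lem:easy-SES} and the fact that $H_2(S_3,\Z)=0$, gives $H_2(\Gamma_1^\Theta,\Z)=0$.

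With all three hypotheses in place, Corollary~\ref{Cor:ThmC-for-PSL2C} produces uncountably many pairwise non-isomorphic $P_\lambda \hookrightarrow \Gamma_{\mathcal{O}}^1\times\Gamma_{\mathcal{O}}^1$ inducing isomorphisms of profinite completions, infinitely many of them finitely generated. The only real obstacle is making sure the hypotheses of Theorem~\ref{ThmC-BridsonReid} underlying the corollary hold. These are relative hyperbolicity, the absence of $p$-torsion for infinitely many primes $p$, and virtually cyclic centralizers. All of them are standard for cocompact Kleinian groups: such groups are hyperbolic, their finite subgroups have bounded order so only finitely many primes occur as element orders, and centralizers of nontrivial elements are virtually cyclic. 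This is exactly the observation of Bridson \& Reid already recorded before Corollary~\ref{Cor:ThmC-for-PSL2C}, so nothing further needs to be proved.
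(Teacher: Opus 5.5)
Your proposal is correct and is exactly the paper's argument: apply Corollary~\ref{Cor:ThmC-for-PSL2C} with $\Gamma = \Gamma_1^\Theta$ (or $\Gamma_2^\Theta$) and $G = \Gamma_{\mathcal{O}}^1$. The two hypotheses come from Proposition~\ref{Prop:Derived-subgroups-of-Thetas}, giving $[\Gamma,\Gamma] = G$, and Proposition~\ref{Prop:all-smaller-Weeks-orbifolds-trivial-H2}, giving $H_2(\Gamma,\Z)=0$ via $H_2(S_3,\Z)=0$ and $\gcd(6,25)=1$. Your extra checks of cocompactness and of the Theorem~C hypotheses only spell out what the paper takes from Bridson \& Reid's observation.
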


Thus the question of Bridson \& Reid from the introduction is answered affirmatively. 

\subsection{Profinite rigidity}\label{Subsec:Profinite-rigidity-of-Weeks}

In \cite[\S9.4, Theorem 9.5]{BMRS}, it is proved that both $\Gamma_\mathcal{O}^1$ and $\Gamma_{\mathcal{O}}$ are profinitely rigid. In this section, we will extend their analysis to show that for all lattices $\pi_1(\mathcal{W}) \leq \Delta \leq \Gamma_{\mathcal{O}}$, we have that $\Delta$ is profinitely rigid. We note that there are $10$ such lattices in total (including $\pi_1(\mathcal{W}), \Gamma_\mathcal{O}^1$, and $\Gamma_{\mathcal{O}}$), and thus there are seven remaining cases to consider. 

\begin{theorem}\label{Thm:Weeks-are-profinitely-rigid}
Every Weeks orbifold group is (absolutely) profinitely rigid. 
\end{theorem}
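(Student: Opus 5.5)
The plan is to reduce everything to Theorem~\ref{Thm:genus-is-same-level}, using the profinite rigidity of $\Gamma_{\mathcal{W}}$ established in \cite{BMRS}. Let $\Delta$ be a Weeks orbifold group, so $\Gamma_{\mathcal{W}} \leq \Delta \leq \Gamma_{\mathcal{O}}$ with $[\Delta : \Gamma_{\mathcal{W}}] = |Q|$ for some $Q \leq D_6$. By Theorem~\ref{Thm:genus-is-same-level}, every group in the genus of $\Delta$ is a lattice $\Delta' < \PSL_2(\C)$ containing (a conjugate of) $\Gamma_{\mathcal{W}}$ with index $|Q|$. Any lattice containing $\Gamma_{\mathcal{W}}$ with finite index lies in its commensurator, and by Mostow--Prasad rigidity it normalizes a finite-index subgroup. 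The main point is that $\Delta'$ in fact normalizes $\Gamma_{\mathcal{W}}$ itself. For this I would use that $\Gamma_{\mathcal{O}}$ is a maximal arithmetic lattice, that the covolume of $\Delta'$ equals $\mathrm{vol}(\mathcal{W})/|Q|$, and that $\mathcal{W}$ is the unique minimal-volume manifold. This is the same input used in \cite[\S9.4]{BMRS} for $\Gamma_{\mathcal{O}}^1$ and $\Gamma_{\mathcal{O}}$, and I would argue verbatim as there. Concretely, $\Delta'$ is arithmetic and commensurable with $\Gamma_{\mathcal{O}}$, so it lies in a maximal arithmetic group of covolume at most $\mathrm{vol}(\mathcal{W})/|Q|$. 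Borel's covolume formula, together with the fact that $\Gamma_{\mathcal{O}}$ is the minimal-covolume group in its commensurability class, forces $\Delta' \leq \Gamma_{\mathcal{O}}$ up to conjugacy. Since $\Gamma_{\mathcal{W}}$ is the unique torsion-free subgroup of index $12$ in $\Gamma_{\mathcal{O}}$, $\Delta'$ is then again one of the ten lattices of Figure~\ref{Fig:Weeks-lattice}.

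We therefore only need to separate the Weeks orbifold groups that share the same index. This is done by profinite invariants, since isomorphic profinite completions force isomorphic abelianizations and the same finite quotients. The cases are as follows.
\begin{itemize}
\item At index $2$, the groups $\langle R\rangle$, $\langle RS\rangle$, $\langle S\rangle$ have abelianizations $\Z_2$, $\Z_2\oplus\Z_5$, $\Z_2\oplus\Z_5$. This isolates $9_{49}(2)$ but leaves $L_{5,-2}(2)$ and $L_{5,-1}(2)$ undistinguished.
\item At index $6$, $7_1^2(2,3)$ has abelianization $\Z_6$, but the two $S_3$-extensions $\Theta_1(2,2,3)$ and $\Theta_2(2,2,3)$ both have abelianization $\Z_2$.
\item Indices $3$, $4$ and $12$ each contain a unique lattice, so there is nothing to separate. (These include the cases already done in \cite{BMRS}, and $\Theta_3(2,2,2)$.)
\end{itemize}

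The main obstacle is separating the two remaining pairs. For each pair I would compute a finer finite-quotient invariant, preferably by hand from the presentations \eqref{Eq:weeks-presentation} and \eqref{Eq:action-on-weeks}. The first candidate is the abelianization of the index-$2$ (or index-$5$) subgroups, or equivalently the number of epimorphisms onto a small group such as $A_5$, $\PSL_2(\mathbb{F}_5)$ or $D_5$.

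For $L_{5,-2}(2)$ versus $L_{5,-1}(2)$, I would compare the $Q$-action on $\Gamma_{\mathcal{W}}^\ab=\Z_5^2$. With $S$ swapping $a$ and $b$, and $RS$ sending $a\mapsto b^{-1}$, the coinvariants agree, but the homology of the kernels of the maps to $\Z_5$ should differ. For $\Theta_1$ versus $\Theta_2$, the key invariant should come from their index-$2$ subgroups. These are both $\Gamma_{\mathcal{O}}^1$ by Proposition~\ref{Prop:Derived-subgroups-of-Thetas}, so that invariant does not distinguish them, and I would instead compare their index-$3$ subgroups, which contain $\Gamma_{\mathcal{W}}$ with index $2$. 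In $\Theta_1$ these are conjugates of $L_{5,-1}(2)$, and in $\Theta_2$ they are conjugates of $L_{5,-2}(2)$. So once the index-$2$ pair is separated by a profinite invariant, the $\Theta$ pair is separated too: a profinite isomorphism induces a bijection of finite-index subgroups preserving index and profinite completion. If no clean invariant appears, the fallback is a finite computation of low-index subgroup abelianizations, for example in GAP.
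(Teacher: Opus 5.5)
\textbf{Gap.} The step you yourself call the main obstacle is never carried out. You produce no profinite invariant separating $\pi_1^\orb(L_{5,-2}(2))$ from $\pi_1^\orb(L_{5,-1}(2))$. Since you reduce $\Theta_1(2,2,3)$ versus $\Theta_2(2,2,3)$ to this pair, both separations remain open.

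The candidates you list do not close it:
\begin{itemize}
\item Each group has abelianization $\Z_2\oplus\Z_5$, so its unique index-$2$ subgroup is $\Gamma_{\mathcal W}$ itself. Index-$2$ subgroups therefore see nothing.
\item Each group has exactly one normal subgroup with quotient $D_5$. For both $S$ and $RS$ there is exactly one quotient $\Z_5$ of $\Gamma_{\mathcal W}^\ab\cong\Z_5^2$ on which the involution acts by inversion.
\item $A_5$ and $\PSL_2(\mathbb{F}_5)$ are the same group. The paper records an exhaustive search showing that no group of order below $168$ distinguishes the two lattices as a quotient.
\item The claim that the homology of the index-$5$ kernels \emph{should differ} is unverified, and deferring to GAP is a plan, not a proof.
\end{itemize}

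The missing ingredient in the paper is the quotient $\PSL_2(7)$. With $x=RS$, the assignment $a\mapsto\left(\begin{smallmatrix}1&0\\1&1\end{smallmatrix}\right)$, $b\mapsto\left(\begin{smallmatrix}1&4\\0&1\end{smallmatrix}\right)$, $x\mapsto\left(\begin{smallmatrix}0&2\\3&0\end{smallmatrix}\right)$ defines a surjection $\pi_1^\orb(L_{5,-2}(2))\to\PSL_2(7)$; the images of $a$ and $b$ are opposite unipotents, so they generate. By contrast, $\pi_1^\orb(L_{5,-1}(2))$ admits no surjection onto $\PSL_2(7)$.

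\textbf{What works.} Your reduction of the $\Theta$ pair to the index-$2$ pair is correct, and it is a genuinely different route from the paper. The paper instead separates $\Gamma_1^\Theta$ and $\Gamma_2^\Theta$ by an independent computation of the primes $p<100$ for which each surjects onto $\PSL_2(p)$.

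To make your reduction airtight, you need to justify that every index-$3$ subgroup $U$ of $\Gamma_i^\Theta$ contains $\Gamma_{\mathcal W}$. The coset action of $U$ has image $C_3$ or $S_3$, and $C_3$ is excluded because $(\Gamma_i^\Theta)^\ab\cong\Z_2$. So the image is $S_3$. In this surjection $\Gamma_{\mathcal W}$ must map trivially, since $\Gamma_{\mathcal W}^\ab\cong\Z_5^2$ admits no quotient $C_3$ or $S_3$.

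You also need to identify these subgroups. Since $R$ is central in $D_6$, every involution of $\langle S,T\rangle$ is conjugate to $S$, and every involution of $\langle RS,ST\rangle$ is conjugate to $RS$. With the $\PSL_2(7)$ fact in hand, your argument then settles the $\Theta$ case with no further computation.

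\textbf{A smaller point.} Your justification that every lattice in the genus lies in $\Gamma_{\mathcal O}$ is loose. That $\Gamma_{\mathcal O}$ has minimal covolume in its commensurability class does not by itself force containment. The paper takes this reduction for granted as well. For indices $2$, $3$ and $4$ it follows directly from $\Gamma_{\mathcal W}^\ab\cong\Z_5^2$, which forces $\Gamma_{\mathcal W}$ to be normal in any lattice containing it with such index.
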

\begin{proof}
We deal with orbifold groups with notation as in Figure~\ref{Fig:Weeks-lattice} in turn. By Theorem~\ref{Thm:genus-is-same-level}, it suffices to distinguish the groups at each specific index from one another by their finite quotients. Thus, it immediately follows that the lattices $\Gamma_\mathcal{O}^1$, $\pi_1^\orb(\Theta_3(2,2,2))$, and $\Gamma_\mathcal{O}$, in which $\pi_1(\mathcal{W})$ has index $3, 4$, resp.\ $12$ are profinitely rigid, a fact already noted in \cite{BridsonReidC}. We thus turn to the groups in which $\Gamma_\mathcal{W}$ has index $2$ and $6$, respectively.

At index $2$, there are three groups, being the orbifold fundamental groups of
\[
9_{49}(2), \quad L_{5,-2}(2), \quad \text{resp.} \quad L_{5,-1}(2).
\]
The abelianization of these three groups are, respectively, $\Z_2, \Z_2 \oplus \Z_5$, and $\Z_2 \oplus \Z_5$, and hence $\pi_1^\orb(9_{49}(2))$ is profinitely rigid. On the other hand, in the notation of \eqref{Eq:action-on-weeks}, if we let $x$ denote the isometry $RS$, then $xa x^{-1} = b^{-1}$ and $xbx^{-1} = a^{-1}$, so we have the presentations
\begin{align*}
\pi_1^\orb(L_{5,-2}(2)) &= \pres{}{\Gamma_\mathcal{W}, x}{xax^{-1}b = xbx^{-1}a = x^2 = 1}, \\
\pi_1^\orb(L_{5,-1}(2)) &= \pres{}{\Gamma_\mathcal{W}, s}{sas^{-1}b^{-1} = sbs^{-1}a^{-1} = s^2 = 1}.
\end{align*}
From these presentations, it can be verified with e.g.\ \texttt{GAP} that $\pi_1^\orb(L_{5,-2}(2))$ surjects $\PSL_2(7)$. Indeed, the map 
\begin{equation}
a \mapsto \begin{pmatrix} 1 & 0 \\ 1 & 1\end{pmatrix}, \quad b \mapsto \begin{pmatrix} 1 & 4 \\ 0 & 1\end{pmatrix}, \quad x \mapsto \begin{pmatrix} 0 & 2 \\ 3 & 0\end{pmatrix}
\end{equation}
can easily be seen to define a surjective homomorphism $\pi_1^\orb(L_{5,-2}(2)) \to \PSL_2(7)$. However, a direct check shows that $\pi_1^\orb(L_{5,-1}(2))$ does not surject $\PSL_2(7)$.\footnote{Indeed, an exhaustive search of all groups of order $\leq 168 = |\PSL_2(7)|$ shows that $\PSL_2(7)$ is the smallest finite quotient that distinguishes the two groups $\pi_1^\orb(L_{5,-2}(2))$ and $\pi_1^\orb(L_{5,-1}(2))$.} Hence $\pi_1^\orb(L_{5,-2}(2))$ and $\pi_1^\orb(L_{5,-1}(2))$ have non-isomorphic profinite completions and hence are profinitely rigid. 

At index $6$, there are also three groups, being the orbifold fundamental groups of 
\[
7_{1}^2(2,3), \quad \Theta_1(2,2,3), \quad \text{resp.} \quad \Theta_2(2,2,3).
\]
Recall that we denote by $\Gamma_1^\Theta$ resp.\ $\Gamma_2^\Theta$ the orbifold fundamental groups of the last two of these orbifolds. Now, the abelianization of $\pi_1^\orb(7_{1}^2(2,3))$,being $\Z_2 \oplus \Z_3$, shows that it is profinitely rigid, since $\Gamma_1^\Theta$ and $\Gamma_2^\Theta$ both have abelianization $\Z_2$. On the other hand, we may again use $\PSL_2(7)$ to distinguish these two groups; indeed, using the presentations of the groups from \eqref{Eq:action-on-weeks}, we can check that for primes $p < 100$ we have that $\Gamma_1^\Theta = \langle \Gamma_\mathcal{W}, rs, st \rangle$ surjects $\PSL_2(p)$ only when $p = 2, 11, 17, 23, 43, 59, 83$, whereas $\Gamma_2^\Theta$ surjects only when $p = 2, 7, 37, 43, 59, 67, 97$. Thus these two groups are also profinitely rigid, and we are done. 
\end{proof}

Thus, again recalling the names of the orbifold quotients of the Weeks manifold due to Mednykh \& Vesnin \cite{Mednykh1998}, we can restate the above theorem in the following form. 
 
\begin{corollary}\label{Cor:Weeks-orbifold-groups}
Let $\Gamma$ be the orbifold fundamental group of any of the hyperbolic $3$-orbifolds $9_{49}(2)$, $L_{5,-2}(2)$, $L_{5,-1}(2)$, $5_2(3)$, $\Theta_3(2,2,2)$, $7_1^2(2,3)$, $\Theta_1(2,2,3)$, $\Theta_2(2,2,3)$, and $T(2,2,2,2,2,3)$. Then $\Gamma$ is absolutely profinitely rigid. 
\end{corollary}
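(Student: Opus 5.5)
The corollary is a restatement of Theorem~\ref{Thm:Weeks-are-profinitely-rigid}, so the plan is simply to translate between the two naming conventions and invoke that theorem.

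First, I would identify the ten lattices $\Gamma_{\mathcal W} \leq \Gamma \leq \Gamma_{\mathcal O}$ with the conjugacy classes of subgroups $Q \leq D_6 \cong \Isom(\mathcal{W})$. This uses Mostow--Prasad rigidity exactly as in \S\ref{Subsec:Weeks-background}: conjugate subgroups give isometric quotients and hence isomorphic orbifold groups. Next, reading off Figure~\ref{Fig:Weeks-lattice} (which follows \cite[Fig.~2]{Mednykh1998}), each nontrivial class corresponds to one of the nine orbifolds listed in the corollary:
\begin{itemize}
\item $\langle R\rangle \leftrightarrow 9_{49}(2)$, $\langle RS\rangle \leftrightarrow L_{5,-2}(2)$, $\langle S\rangle\leftrightarrow L_{5,-1}(2)$;
\item $\langle ST\rangle \leftrightarrow 5_2(3)$, whose group is $\Gamma_{\mathcal O}^1$;
\item $\langle R,S\rangle\leftrightarrow\Theta_3(2,2,2)$, $\langle R,ST\rangle\leftrightarrow 7_1^2(2,3)$;
\item $\langle RS,ST\rangle \leftrightarrow \Theta_2(2,2,3)$, $\langle S,T\rangle\leftrightarrow\Theta_1(2,2,3)$;
\item $D_6 \leftrightarrow T(2,2,2,2,2,3)$, whose group is $\Gamma_{\mathcal O}$.
\end{itemize}
Hence the orbifold fundamental group $\Gamma = \pi_1^{\orb}(\mathcal W/Q)$ of each listed orbifold is a Weeks orbifold group.

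It then remains only to apply Theorem~\ref{Thm:Weeks-are-profinitely-rigid}, which asserts absolute profinite rigidity for every Weeks orbifold group. This finishes the proof.

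The only step needing care is the dictionary between Mednykh--Vesnin's orbifold names and the subgroups of $D_6$. This is taken from \cite{Mednykh1998} and agrees with the abelianizations recorded in the figure, which were already used in the proof of the theorem. Everything else is immediate.
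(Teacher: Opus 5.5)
Your proposal is correct and is the paper's own argument. The corollary is just Theorem~\ref{Thm:Weeks-are-profinitely-rigid} restated using the Mednykh--Vesnin orbifold names, and your dictionary between conjugacy classes of subgroups of $D_6$ and the nine listed orbifolds agrees with Figure~\ref{Fig:Weeks-lattice}.
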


We now turn towards verifying the fact that $H_2(\Gamma_{\mathcal{O}}, \Z) = \Z / 2\Z$ in a purely mechanical way, namely by using our algorithm from \S\ref{Subsec:algorithm-description} with the quotient $Q \cong D_6$. 

\subsection{Additional calculations}\label{Subsec:computations-for-Weeks}

Let $\Gamma_{\mathcal{W}} = \pi_1(\mathcal{W})$ be the group with the presentation \eqref{Eq:weeks-presentation}. We will let $H = \pres{Mon}{\Sigma \cup \Sigma^{-1}}{\cR_H}$ be a monoid presentation for $\Gamma_{\mathcal{W}}$ on the generating set $\Sigma \cup \Sigma^{-1}$, where $\Sigma = \{ a, b\}$ is the standard group generating set for $\Gamma_{\mathcal{W}}$, subject to the relations \eqref{Eq:weeks-presentation}. We will denote the elements of $\Sigma^{-1}$ as $\{ A, B \}$, so that $aA = Aa = 1$ in $H$. We will assume that $\cR_H$ is a complete and reduced rewriting system for $H$, such that the word $ABab$ is irreducible modulo $\cR_H$, and such that the words $ba$ and $AB$ are irreducible modulo $\cR_H$. This is possible to arrange for simultaneously, since projecting to the abelianization shows that $[a,b] \neq u$ for any word $u$ with $|u| \leq 3$. Furthermore, we have that $ab \neq AB$, as is easily seen by projecting to the abelianization, and hence these words $a, b, A, B, AB, ABab$ can all be assumed to be irreducible modulo $\cR_H$. Beyond this, the rewriting system $\cR_H$ for the fundamental group of the Weeks manifold will be taken entirely as a black box, and no further information is needed from it. 

The group $Q = \Isom(\mathcal{W})$, isomorphic to the dihedral group with $12$ elements, is generated by the isometries $\{ r, s, t \}$, as in \S\ref{Subsec:Weeks-background}. It is also generated by the two elements $rt, s$, and if we call these $x, y$, then a monoid presentation for $Q$ is 
\[
Q = \pres{Mon}{x,y}{x^2 = y^2 = (xy)^6 = 1} \cong D_6.
\]
The action of $x$ and $y$ on the generators $a, b$ (and consequently also $A, B$) can be read off from \eqref{Eq:action-on-weeks}. In particular, $x$ acts by mapping $(a,b) \to (a^{-1}, ab)$, while $y$ acts by mapping $(a,b) \to (b, a)$. A complete and reduced rewriting system $\cR_Q$ for $Q$ on the generating set $\{ x, y \}$ is given by 
\[
x^2 \to 1, \quad y^2 \to 1, \quad (yx)^3 \to (xy)^3.
\]
To compute the section map $s \colon \cR_Q \to \Irr(\cR_H)$, we simply note that $s(x^2 \to 1) = s(y^2 \to 1) = 1$, since (see \eqref{Eq:action-on-weeks}) we have $(RT)^2 = S^2 = 1$ when acting by inner automorphisms. On the other hand, we have 
\begin{align*}
(S \cdot RT)^3 (RT \cdot S)^{-3}(a) &= (a^{-1}b^{-1}ab)a(b^{-1}a^{-1}ba) \\
(S \cdot RT)^3 (RT \cdot S)^{-3}(b) &= a^{-1}b^{-1}aba^{-1}ba = (a^{-1}b^{-1}ab)b(b^{-1}a^{-1}ba)
\end{align*}
and hence $(s \cdot rt)^3 (rt \cdot s)^{-3}$ acts via conjugation by $a^{-1}b^{-1}ab$. Thus $s((yx)^3 \to (xy)^3) = ABab$. This word is irreducible, since $\Gamma_{\mathcal{W}}$ is not commutative, and both its generators are non-trivial (as can be seen by passing to the abelianization). 

Thus, assembling this information, we can by Theorem~\ref{Thm:Hermiller-Meier-G-FCRS} construct a complete and reduced rewriting system $\cR_G$ for the extension 
\[
1 \to \pi_1(\mathcal{W}) \to \Gamma_{\mathcal{O}} \to \Isom(\mathcal{W}) \to 1
\]
where $H = \pi_1(\mathcal{W})$, and $Q = \Isom(\mathcal{W})$, and $G = \Gamma_{\mathcal{O}}$. The system $\cR_G$ has the alphabet $\{ a, b, A, B, x, y \}$, with all rules from $\cR_H$ together with 
\begin{align*}
&xa \to Ax, \quad xb \to abx, \quad xA \to ax, \quad xB \to BAx, \\
&ya \to by, \quad yb \to ay, \quad yA \to By, \quad yB \to Ay \\
&x^2 \to 1, \quad y^2 \to 1, \quad (yx)^3 \to ABab(xy)^3.
\end{align*}

We now compute the data that we need to find $H_2(\Gamma_{\mathcal{O}}, \Z)$. For  ease of notation we will often deliberately conflate equivalence classes and representatives for them.

\begin{table}[h]
\begin{tabular}{|r|cccc|cc|}\hline
                & $(a)$ & $(A)$ & $(b)$ & $(B)$ & $(x)$ & $(y)$ \\ \hline
$(x,a)$         & $1$   & $-1$  &       &       &       &       \\
$(x,b)$         & $-1$  &       &       &       &       &       \\
$(x,A)$         & $-1$  & $1$   &       &       &       &       \\
$(x,B)$         &       & $-1$  &       &       &       &       \\
$(y,a)$         & $1$   &       & $-1$  &       &       &       \\
$(y,b)$         & $-1$  &       & $1$   &       &       &       \\
$(y,A)$         &       & $1$   &       & $-1$  &       &       \\
$(y,B)$         &       & $-1$  &       & $1$   &       &       \\
\hline
$(x,x)$         &       &       &       &       & $2$   &       \\
$(y,y)$         &       &       &       &       &       & $2$   \\
$(y,x(yx)^2)$ & $-1$  & $-1$  & $-1$  & $-1$  &       &      \\ \hline
\end{tabular}

\caption{The matrix $M_2$ computing $\widetilde{\partial_2}$ for the Weeks manifold. The first eight rows are computations in filtration degree $1$ (i.e.\ they are the basis of $E^0_{1,1}$), and the last three rows are computations in filtration degree $2$ (i.e.\ they are the basis of $E^0_{2,0}$). }
\label{Tab:M2}
\end{table}
\begin{table}[h]
\footnotesize
\setlength{\tabcolsep}{3pt}
\begin{tabular}{|r|cccccccc|ccc|}\hline
                 & $(x,a)$ & $(x,b)$ & $(x,A)$ & $(x,B)$ & $(y,a)$ & $(y,b)$ & $(y,A)$ & $(y,B)$ & $(x,x)$ & $(y,y)$ & $(y,x(yx)^2)$ \\ \hline
$(x,x,a)$        &    $-1$ &         &    $-1$ &         &         &         &         &         &         &         &                 \\
$(x,x,b)$        &    $-1$ &   $-2$  &         &         &         &         &         &         &         &         &                 \\
$(x,x,A)$        &   $-1$  &         &   $-1$  &         &         &         &         &         &         &         &                 \\
$(x,x,B)$        &         &         &   $-1$  &  $-2$   &         &         &         &         &         &         &                 \\
$(y,y,a)$        &         &         &         &         &  $-1$   &  $-1$   &         &         &         &         &                 \\
$(y,y,b)$        &         &         &         &         &  $-1$   &  $-1$   &         &         &         &         &                 \\
$(y,y,A)$        &         &         &         &         &         &         &   $-1$  &  $-1$   &         &         &                 \\
$(y,y,B)$        &         &         &         &         &         &         &   $-1$  &  $-1$   &         &         &                 \\
$(y,x(yx)^2,a)$  &   $-1$  &  $-3$   &   $1$   &   $1$   &  $-2$   &  $-2$   &    $2$  &   $2$   &         &         &                 \\
$(y,x(yx)^2,b)$  &    $1$  &  $3$    &  $-1$   &  $-1$   &   $2$   &  $1$    &    $2$  &   $1$   &         &         &                 \\
$(y,x(yx)^2,A)$  &    $1$  &  $1$    &  $-1$   &  $-3$   &   $2$   &  $2$    &   $-2$  &  $-2$   &         &         &                 \\
$(y,x(yx)^2,B)$  &  $-1$   &  $-1$   &   $1$   &   $3$   &   $2$   &  $1$    &    $2$  &   $1$   &         &         &                 \\
        \hline
$(x,x,x)$        &         &         &         &         &         &         &         &         &         &         &                 \\
$(y,y,y)$        &         &         &         &         &         &         &         &         &         &         &                 \\
$(y,y,(xy)^2x)$  &         &         &         &         & $-1$    &  $-1$   &  $-1$   &  $-1$   &         &         &   $-2$          \\ 
$(y,x(yx)^2,x)$  &   $-1$  &  $-1$   &  $-1$   &  $-1$   &         &         &         &         &         &         &    $2$          \\ 
$(y,x(yx)^2,yx)$ &   $-1$  &  $-1$   &  $-1$   &  $-1$   & $-1$    &  $-1$   &  $-1$   &  $-1$   &         &         &       \\ \hline           
\end{tabular}
\caption{The matrix $M_3$ computing $\widetilde{\partial_3}$ for the Weeks manifold. The first twelve rows are computations in filtration degree $2$ (i.e.\ they are the basis of $E^0_{2,1}$), and the last five rows are computations in filtration degree $3$ (i.e.\ they are the basis of $E^0_{3,0}$).}
\label{Tab:M3}
\end{table}

We compute $\widetilde{\partial_2}$ and $\widetilde{\partial_3}$ modulo the degree $0$ terms, and get the matrices $M_2$ resp.\ $M_3$ in Table~\ref{Tab:M2} resp.\ Table~\ref{Tab:M3}. First, we compute $H_0(Q, H^\ab)$. This is contained in the computations of the top-left corner of $M_2$. Let $X_1$ be this top left corner, which is a $(8 \times 4)$-matrix. Now $H^\ab$ is generated as an abelian group by the four basis elements $(a), (A), (b), (B)$ (which are \textit{not} a basis; indeed we have e.g.\ $(a) + (A) = 0$ in $H^\ab$). Now the module of co-invariants is just the quotient of this group by the image of $X_1^t$, which is generated by the columns of $X_1^t$, amounting to
\[
(a)-(A), (a), (A), (a) - (b), \: \text{and} \: (A) - (B).
\] 
Thus $(a) = 0$ in the quotient, and since $(a)-(b) = 0$, we also have $(b) = 0$ in the quotient. Thus $H^\ab = 0$, which we found already using Lemma~\ref{Lem:easy-SES}. Note, however, that we have here not made any direct use of the fact that $H^\ab$ has order $25$, or indeed that it is a finite group; the co-invariants will be $0$ regardless of the order, and this is deduced only from the specific action of $Q$ on the base group. 

Next, we compute $H_1(Q, \Z)$ and $H_2(Q, \Z)$; of course, since $Q = D_6$, this can easily be looked up, but the additional information contained in the chain maps coming from the section $s \colon Q \to G$ will be crucial in the case of $H_2$. Let $Y_2$ be the bottom right $(3 \times 2)$-minor in $M_2$, and let $Y_3$ be the bottom right $(5 \times 3)$-minor in $M_3$, i.e.\
\[
Y_2 = \begin{pmatrix}
2 & 0 \\ 
0 & 2 \\ 
0 & 0
\end{pmatrix}, \quad \text{and} \quad 
Y_3 = \begin{pmatrix}
0 & 0 & 0 \\
0 & 0 & 0 \\
0 & 0 & -2 \\ 
0 & 0 & 2 \\
0 & 0 & 0
\end{pmatrix} 
\]
corresponding to the filtration degree $2$ resp.\ $3$. Then $H_2(Q, \Z)$ is just isomorphic to the kernel of $Y_2^t$ modulo the image of $Y_3^t$. The image of $Y_2^t$ is spanned by $2(x)$ and $2(y)$, and hence $H_1(Q, \Z)$ is generated by the two order $2$ elements $[(x)]$ and $[(y)]$, i.e.\ $H_1(Q, \Z) \cong \Z_2^2$, as is to be expected from the dihedral group of order $12$. Next, notice that the kernel of $Y_2^t$ is spanned by the element $(y, x(yx)^2)$, and the image of $Y_3^t$ hits this basis element with coefficient $2$; thus $H_2(Q, \Z) \cong \Z / 2\Z$. Although $H_0(Q, H^\ab) = 0$, showing that the transgression $\tau \colon H_2(Q, \Z) \to H_0(Q, H^\ab)$ is the zero map, in general the information about $\tau$ is contained in the bottom left corner of $M_2$. Indeed, $H_2(Q, \Z)$ is generated by $(y, x(yx)^2)$, whose image in filtration degree $1$ is seen to be $(a) - (A) - (b) - (B)$. Thus $\im(\tau)$ is generated by precisely that element; of course, this image represents $0$ when projected into $H^\ab$, and of course also remains $0$ when projected into $H_0(Q, H^\ab)$.  

We now compute $H_1(Q, H^\ab)$, a basis for $H_3(Q, \Z)$, and the image of the transgression $\tau_3 \colon H_3(Q, \Z) \to H_1(Q, H^\ab)$. The information to compute $H_1(Q, H^\ab)$ is contained in the \textit{top left corner} of $M_3$ and $M_2$. First, the generators of $H_1(Q, H^\ab)$ come from the kernel of the transpose of the $(8 \times 4)$-minor in the top left corner of $M_2$. Let us write the element $(x,a)$ as $a_x$, and analogously for $b_x, A_x, B_x, a_y, b_y, A_y$, and $B_y$. Then reading those top left rows of $M_2$, we find that $H_1(Q, H^\ab)$ is generated by 
\[
\{ a_x + A_x, \: -a_x - b_x + B_x, \: a_y + b_y, \: A_y + B_y \}
\]
Notice, however, that in $\Z[(x), (y)] \otimes H^\ab$ we have the equalities $a_x + A_x = 0$ and $b_x + B_x = 0$. Thus the above generators give the generating set
\[
S = \{ -a_x - 2b_x, \: a_y + b_y \}
\]
for $H_1(Q, H^\ab)$. To find their images in $H_1(Q, H^\ab)$, we now quotient out by the image of $\widetilde{\partial_3}$ in this filtration degree, i.e.\ we take the rows of the top left $(12\times 8)$-minor of $M_3$, and quotient out $\Z[(x), (y)] \otimes H^\ab$ by these rows; thus, the first row yields the (trivial) relation $-a_x - A_x = 0$, the second row yields $-a_x - 2b_x = 0$, and the fifth row yields $- a_y - b_y = 0$. Thus both of the generators of $S$ lie in the image of our map; and hence $H_1(Q, H^\ab) = 0$. 

Finally, for completeness, we also show how to compute the transgression $E^2_{3,0} \to E^2_{1,1}$, even though we know its image to be zero. Note that $H_3(Q, \Z)$ is spanned by the kernel of $Y_3^t$. We rename the basis elements of our matrix as follows:
\[
e_1 = (x,x,x), \: e_2 = (y,y,y), \: e_3 = (y,y,(xy)^2x), \: e_4 = (y,x(yx)^2x), \: \text{ and } \: e_5 = (y,x(yx)^2yx).
\]
Then the kernel is easily seen to be generated by the elements $e_1, e_2, e_3 + e_4, \: \text{and} \: e_5$. In fact, were we to compute $\partial_4^Q$, which is easily doable, we would see that in $H_3(Q, \Z)$ we have $2[e_1] = 2[e_2] = 6[e_5] = 0$, and that $[e_3] + [e_4] = [e_5]$, giving that $H_3(Q, \Z) \cong \Z_2^2 \oplus \Z_6$, as expected from $Q = D_6$. However, we will not need this additional information for computing $H_2(G, \Z)$. Returning to the basis above, we can look in the \textit{bottom left corner} of $M_3$ to see that neither the image of $e_1$ nor of $e_2$ contributes any terms to the lower filtration degree (this corresponds to the fact that $s(x,x) = s(y,y) = 1$), whereas the contribution of $e_3+ e_4$ and $e_5$ both contribute to the term 
\[
- (x,a) - (x,b) - (x,A) - (x,B) - (y,a) - (y,b) - (y,A) - (y,B). 
\]
However, as in Step~4 of the algorithm, the element $(x,a)$ corresponds to $(x) \otimes [a]_\ab$ in $\Z[B] \otimes_{\Z} H^\ab$, and hence the above term becomes the equivalence class in $H_1(Q, H^\ab)$ of 
\[
- (x) \otimes ([a]+[b]+[A]+[B]) - (y) \otimes ([a]+[b]+[A]+[B]) = 0 - 0 = 0,
\]
since $[a]_\ab + [A]_\ab = 0$, and likewise for $b$. Thus the transgression $H_3(Q, \Z) \to H_1(Q, H^\ab)$ is the zero map (as expected, since, by the above computation, its range is $0$). We summarize this data into the spectral sequence for the homology of $G$ in Figure~\ref{Fig:ss-ex-Weeks} below. 

\begin{sseqdata}[name = weeks, homological Serre grading, xscale = 3, y axis gap = 25pt, x axis gap = 15pt, left clip padding = 15pt, right clip padding= 25pt, classes = {draw = none },differentials=blue, title={$E^\page_{p,q}$} ]
\class["\Z"](0,0)
\class["{\Z_5 \oplus \Z_5}"](0,1)
\class["{(\Z_5 \oplus \Z_5)^2}"](1,1)
\class["{(\Z_5 \oplus \Z_5)^3}"](2,1)
\class["{\Z^2}"](1,0)
\class["{\Z^3}"](2,0)
\class["{\Z^5}"](3,0)
\class["0"](0,2)

\d[""]1(1,1)
\d[""]1(2,1)
\d[""]1(1,0)
\d["{\widetilde{\partial}_{2}^Q}"]1(2,0)
\d["{\widetilde{\partial}_{3}^Q}"]1(3,0)

\replaceclass["{\Z}"](0,0)
\replaceclass["{0}"](0,1)
\replaceclass["{0}"](1,1)
\replaceclass["\Z_2\oplus \Z_2"](1,0)
\replaceclass["\Z_2"](2,0)
\replaceclass["\Z_6\oplus \Z_2 \oplus \Z_2"](3,0)

\d["{\tau_2}"]2(2,0)
\d["{\tau_3}"]2(3,0)
\end{sseqdata}

\begin{figure}[h]
\begin{align*}
\printpage[ name = weeks, page = 1 ] \\
\printpage[ name = weeks, page = 2 ]
\end{align*}
\caption{The first and second page of the homological spectral sequence for the extension $G = \Gamma_{\mathcal{O}}$ of the Weeks manifold $\mathcal{W}$ by its full isometry group $\Isom(\mathcal{W}) \cong D_6$, showing that $H_1(\Gamma_{\mathcal{O}}, \Z) = \Z_2 \oplus \Z_2$, and $H_2(\Gamma_{\mathcal{O}}, \Z) = \Z_2$. The data for all morphisms drawn out is contained in the matrices $M_2$ and $M_3$ of Tables~\ref{Tab:M2} and \ref{Tab:M3}.}
\label{Fig:ss-ex-Weeks}
\end{figure}

Thus the diagonal $p+q=2$ has converged on the $E^2$-page, and so reading off the diagonal in Figure~\ref{Fig:ss-ex-Weeks}, we see that we have $H_2(G, \Z) \cong \Z / 2 \Z$ when $G = \Gamma_{\mathcal{O}}$, which is precisely what we wanted to show, and recovers our result from Proposition~\ref{Prop:sad-news}.

\section{Fibonacci Manifolds}\label{Sec:Fibonacci}

\noindent In this section, we will compute the second homology group of all orbifolds arising as quotients of the \textit{Fibonacci manifolds} $M_n$ by orientation-preserving isometries $Q \leq \Isom^+(M_n)$. We shall see that for $n = p > 3$ prime, these second homology groups coincide with that of $H_2(Q, \Z)$. The same will also be proved to hold for $n=4$.

We now give definitions and fix some notation. Let $n \geq 4$, let $K \subset S^3$ be the figure-eight knot, and let $Q_n$ be the orbifold obtained by $(n,0)$-Dehn filling on $K$. Let $M_n$ denote the $n$-fold cyclic covering of $S^3$ branched over the figure-eight knot. Then $M_n$ is a hyperbolic manifold, and $Q_n$ is a hyperbolic orbifold. These manifolds were studied in \cite{Helling1998}, see also \cite{Vesnin1995, Howie2017}. Furthermore, $M_n$ is known to be arithmetic if and only if $n=4, 5, 6, 8$ or $12$ \cite{Hilden1992}. We will consider the groups
\[
\Gamma_n := \pi_1(M_n) \quad \text{and} \quad \Delta_n := \pi_1^\orb(Q_n).
\]
Note that $\Gamma_n \leq_n \Delta_n$. It is known \cite{Helling1998} that $\Gamma_n$ is isomorphic to the Fibonacci group $\Fib(2,2n)$ introduced by Conway \cite{Conway1965}, given by the presentation 
\begin{equation}
\Fib(2,2n) = \pres{}{x_1, \dots, x_{2n}}{x_{i}x_{i+1} = x_{i+2} \quad (1 \leq i \leq 2n, \text{ taken $\operatorname{mod} 2n$)}}
\end{equation}
It is not difficult to use this presentation to find that 
\begin{equation}\label{Eq:abelianization-of-Fibonacci}
\Gamma_n^\ab \cong \begin{cases*} \Z_N \oplus \Z_{5N}, \: \: N = f_{n'}L_{n'} & when $n = 2n'$ \\ \Z_N \oplus \Z_{N}, \: \: \: N = L_{n} & when $n = 2n'+1$ \end{cases*}
\end{equation}
where $f_n$ and $L_n$ are, respectively, the Fibonacci--Lucas numbers defined by the linear recurrence
\[
s_{n+2} = s_{n+1} + s_n
\]
and the initial conditions 
\[
f_0 = 0, \: f_1 = 1, \quad \text{and} \quad L_0 = 2, \: L_1 = 1.
\]
In particular, $M_n$ is a rational homology $3$-sphere for all $n \geq 4$. 

It is known (see \cite{Vesnin1999}) that the full isometry group of $M_n$ is isomorphic to
\[
\Isom(M_n) \cong \pres{}{x, y}{x^{2n} = y^4 = (yx)^2 = (y^{-1}x)^2=1},
\]
which is a group of order $8n$ isomorphic to a semidirect product $C_n \rtimes D_8$. The action of these isometries is described explicitly in \cite{Vesnin1999}. The group of orientation-preserving isometries is generated by $x^2, y$ and is hence isomorphic to
\[
\Isom^+(M_n) \cong \pres{}{x, y}{x^n = y^4 = 1, yxy^{-1} = x^{-1}} \cong C_n \rtimes C_4
\]
of order $4n$, as proved by Maclachlan \& Reid \cite[p. 174]{Maclachlan1997}. The action of these orientation-preserving isometries can be described explicitly. 

We will now consider certain extensions of $\Gamma_n$ by subgroups of $\Isom^+(M_n)$, and compute their second homology groups. In the case that $n=p$ is prime, the number of extensions is uniformly bounded, and we can compute all second homology groups. Indeed, it is easy to see that for prime $p$, the group $\Isom^+(M_p)$ is isomorphic to $C_2 \times D_p$, which has exactly 10 conjugacy classes of subgroups $Q \leq C_2 \times D_p$. For non-prime $n$, the situation is more difficult owing to the large number of subgroups of $C_n \rtimes C_4$, but we will give a general necessary condition for the vanishing of the second homology group. 

\subsection{Prime case}\label{Subsec:Prime-Fibonacci}

The prime case is straightforward and requires no deep machinery. Throughout this section, we will fix a prime $p>3$ and consider the fundamental group $\Gamma_p = \pi_1(M_p) \cong \Fib(2,2p)$. Let $Q \leq \Isom^+(M_p)$, and let $G$ denote the corresponding extension of $\Gamma_p$ by $Q$. As usual, we let $E_{\bullet,\bullet}^2$ denote the terms of the homological Lyndon--Hochschild--Serre spectral sequence. We will describe the relevant homology groups $H_\bullet(Q, \Gamma_p^\ab)$, and use this to deduce our main result that $H_2(G, \Z) \cong H_2(Q, \Z)$ (see Theorem~\ref{Thm:prime-fibonacci}). We divide the cases into whether $Q$ is large or small, in the following sense:

\begin{itemize}
\item \textbf{$p$ does not divide $|Q|$.} Since $\Isom^+(M_p)$ has order $4p$, it follows that $Q$ is a $2$-group. Recall that $|\Gamma^\ab_p| = L_p^2$ by \eqref{Eq:abelianization-of-Fibonacci}. Note that since $L_0 = 2$ and $L_1 = 1$, we obviously have that $L_n \equiv 1 \pmod{2}$ for all odd $n \geq 1$. In particular, $L_p$ is odd. Thus $|Q|$ and $L_p$ are coprime, so by Lemma~\ref{Lem:easy-SES} we have an exact sequence
\[
0 \to H_2(G, \Z) \to H_2(Q,\Z) \xrightarrow{\tau} (\Gamma_p^\ab)_Q
\]
where $\tau \colon H_2(Q,\Z) \to \Gamma_p^\ab$ is the transgression. However, since $Q$ is a $2$-group, it follows that $H_2(Q, \Z)$ is a finite $2$-group, but $(\Gamma_p^\ab)_Q$ is necessarily of odd order since $\Gamma_p^\ab$ has odd order. Thus $\tau =0$ and $H_2(G, \Z) \cong H_2(Q, \Z)$, as desired. 
\item \textbf{$p$ divides $|Q|$.} First, we note that the module of co-invariants for the action of $Q$ on $\Gamma_p^\ab$ is trivial: indeed, the group $\Isom^+(M_p)$ contains an isometry $\rho \colon M_p \to M_p$ of order $p$, which acts on the generators $x_1, \dots, x_{2p}$ of the fundamental group $\Gamma_p$ by 
\[
\rho(x_i) = x_{i+2} \quad (i = 1, \dots, 2p)
\]
with indices taken modulo $2p$. Since $p$ divides $|Q|$ it follows by Cauchy's theorem that $Q$ contains an element of order $p$. But all elements of order $p$ of $\Isom^+(M_p)$ are conjugate to $\rho$, and hence by Mostow--Prasad rigidity we may assume without loss of generality that $\langle \rho \rangle \leq Q$. However, we also have $x_i x_{i+1} = x_{i+2}$ in $\Gamma_p$, and hence taking co-invariants we have $[x_{i+1}] = 0$ in $H_0(Q, \Gamma_p^\ab)$ for all $1 \leq i \leq 2p$, as desired. Thus $E_{0,1}^2 = 0$. 

Next, we show that we have $E_{1,1}^2=0$. It is well-known that the sequence of Lucas numbers $L_p$ satisfies a Gauss congruence, and so in particular by M\"obius inversion we have that $L_p \equiv 1 \pmod{p}$. Furthermore, it is obvious that for $p$ prime, we have that $L_p \equiv 1 \pmod{2}$, and hence $L_p \equiv 1 \pmod{2p}$. In particular, $\gcd(L_p^2, 2^kp)=1$, so by the same reasoning via the transfer map as in the proof of Lemma~\ref{Lem:easy-SES} we have $E_{1,1}^2=0$. Thus if $p$ divides $|Q|$, we have that $E_{1,1}^2=0$ and so $H_2(G, \Z) \cong H_2(Q,\Z)$ also in this case. 
\end{itemize}

This completes the proof of the following theorem.

\begin{theorem}\label{Thm:prime-fibonacci}
Let $p>3$ be prime, let $M_p$ be the Fibonacci manifold, and let $\Gamma_p = \pi_1(M_p)$. Let $Q \leq \Isom^+(M_p)$, and let $G = \pi_1^\orb(M_p / Q)$. Then $H_2(G, \Z) \cong H_2(Q, \Z)$. In particular, if $Q$ is a proper subgroup of $\Isom^+(M_p)$ which is not isomorphic to $C_2\times C_2$, then $H_2(G, \Z) = 0$. 
\end{theorem}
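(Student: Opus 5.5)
The plan is to run the homological Lyndon--Hochschild--Serre spectral sequence for $1 \to \Gamma_p \to G \to Q \to 1$, as in the proof of Lemma~\ref{Lem:easy-SES}. Since $M_p$ is a rational homology $3$-sphere by \eqref{Eq:abelianization-of-Fibonacci}, Poincaré duality gives $H_2(\Gamma_p,\Z)=0$. Hence $E^2_{p',2}=0$ for all $p'$, and the diagonal $p'+q=2$ consists only of $E^2_{1,1}=H_1(Q,\Gamma_p^\ab)$ and $E^2_{2,0}=H_2(Q,\Z)$. It therefore suffices to show two things: first, $E^2_{1,1}$ vanishes, or at least survives to zero; second, the transgression $\tau\colon H_2(Q,\Z)\to H_0(Q,\Gamma_p^\ab)$ is zero. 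Then $E^\infty_{2,0}=H_2(Q,\Z)$ is the only surviving term, and $H_2(G,\Z)\cong H_2(Q,\Z)$. I would use the fact that $\Isom^+(M_p)\cong C_n\rtimes C_4$ has order $4p$, so $|Q|=2^k$ or $2^kp$ with $k\le 2$, and split into two cases according to whether $p$ divides $|Q|$.

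\textbf{Case $p\nmid |Q|$.} Here $Q$ is a $2$-group. From the recurrence with $L_0=2$ and $L_1=1$, the Lucas number $L_p$ is odd for odd $p$, so $|\Gamma_p^\ab|=L_p^2$ is odd. Lemma~\ref{Lem:easy-SES} then applies directly: it gives $E^2_{1,1}=0$ by transfer, and $\tau=0$ because it maps a $2$-group to a group of odd order.

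\textbf{Case $p\mid |Q|$.} Here I need coprimality of $L_p$ with $2^kp$. Oddness of $L_p$ is as before. For $p$ I would use the congruence $L_p\equiv 1 \pmod p$, which follows from $L_p=\alpha^p+\beta^p\equiv(\alpha+\beta)^p=1$ modulo $p$ in $\Z[\alpha]$, where $\alpha,\beta$ are the roots of $t^2-t-1$. Thus $\gcd(|Q|,|\Gamma_p^\ab|)=1$ and Lemma~\ref{Lem:easy-SES} again applies verbatim, killing $E^2_{1,1}$ and $\tau$.

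As a more concrete confirmation that $\tau=0$, I would also note that the coinvariants already vanish. By Cauchy and conjugacy of the order-$p$ elements, together with Mostow--Prasad rigidity, we may assume the index-shift isometry $\rho\colon x_i\mapsto x_{i+2}$ lies in $Q$. In the coinvariants $x_i$ and $x_{i+2}$ are then identified, so the relation $x_ix_{i+1}=x_{i+2}$ forces $[x_{i+1}]=0$. This gives $E^2_{0,1}=0$.

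For the ``in particular'' statement, I would list the conjugacy classes of subgroups of $\Isom^+(M_p)\cong C_2\times D_p$: cyclic groups, $D_p$, $C_2\times C_2$, and the whole group. Then I would compute Schur multipliers. Cyclic groups have trivial multiplier, and $H_2(D_p,\Z)=0$ for odd $p$. The only proper subgroup with nonzero multiplier is $C_2\times C_2$, with $H_2\cong\Z/2$. The main point needing care is the arithmetic input $L_p\equiv 1\pmod{2p}$, together with the correct identification of $\Isom^+(M_p)$ and its subgroup list; the homological part is a direct invocation of Lemma~\ref{Lem:easy-SES}.
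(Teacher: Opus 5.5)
Your proposal is correct and is essentially the paper's proof: the same Lyndon--Hochschild--Serre setup with $H_2(\Gamma_p,\Z)=0$, the same split according to whether $p \mid |Q|$, and the same arithmetic input $\gcd(L_p, 2p)=1$ feeding the transfer argument of Lemma~\ref{Lem:easy-SES}. Your observation that this coprimality makes the coinvariant computation via $\rho$ redundant, so that Lemma~\ref{Lem:easy-SES} handles every $Q$ at once, is only a minor streamlining.

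One small correction, which the paper shares: $L_n$ is even exactly when $3 \mid n$ (for example $L_3 = 4$). So the oddness of $L_p$ comes from $3 \nmid p$, i.e.\ from $p>3$, not from $p$ being odd.
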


The last claim follows directly from the first together with the simple observation that $\Isom^+(M_p) \cong C_2 \times D_p$ has only two conjugacy classes of subgroups with non-trivial second integral homology group: $C_2\times C_2$, and the full group $\Isom^+(M_p)$, both of which have Schur multiplier $\Z_2$.

\subsection{The case of $\Gamma_4$}

In this section, we will deal with the special case of $n=4$, which has received some special attention in the literature. The group $\Gamma_4$ is easily seen to be given by the presentation
\[
\Gamma_4 = \pres{}{a,b}{ba^{-2}ba^{-1}b^2ab^2a^{-1} = a^2bab^2aba^2b^{-1} = 1}, \quad \text{so that} \quad \Gamma_4^\ab \cong \Z_3^2 \oplus \Z_5.
\]
It is arithmetic, and profinitely rigid \cite[Theorem~6.2]{Bridson2022}. The orientation preserving isometry group is $\Isom^+(M_4) \cong C_2 \times D_4$ which has order $16$. Let $Q \leq \Isom^+(M_4)$. Then $H_2(Q, \Z)$ is a finite $2$-group, whereas $\Gamma_4^\ab \cong \Z_3^2 \oplus \Z_5$. In particular, the transgression map
\[
H_2(Q, \Z) \xrightarrow{\tau} (\Gamma_4^\ab)_Q
\]
is necessarily trivial; that is, Lemma~\ref{Lem:easy-SES} immediately implies the following result.

\begin{theorem}\label{Thm:Fib-also-works-for-n=4}
Let $Q \leq \Isom^+(M_4)$, and let $G$ be the extension of $\Gamma_4$ by $Q$. Then $H_2(G, \Z) \cong H_2(Q, \Z)$. 
\end{theorem}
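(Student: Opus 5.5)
The plan is to apply Lemma~\ref{Lem:easy-SES} directly to the extension
\[
1 \to \Gamma_4 \to G \to Q \to 1,
\]
with $M = M_4$ and $G = \pi_1^\orb(M_4/Q)$. Two hypotheses need checking. First, $M_4$ must be a hyperbolic rational homology $3$-sphere. Hyperbolicity holds since $n = 4 \geq 4$. For the homology, the stated presentation gives $\Gamma_4^\ab \cong \Z_3^2 \oplus \Z_5$, which is finite. This agrees with \eqref{Eq:abelianization-of-Fibonacci} for $n = 2n'$, $n' = 2$: here $N = f_2 L_2 = 1\cdot 3 = 3$, so $\Gamma_4^\ab \cong \Z_3 \oplus \Z_{15} \cong \Z_3^2 \oplus \Z_5$. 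Hence $H_1(M_4,\Q) = 0$, and by Poincaré duality $H_2(\Gamma_4,\Z) = 0$.

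Second, we need the coprimality condition. Since $\Isom^+(M_4) \cong C_2 \times D_4$ has order $16$ (equivalently $4n$ with $n = 4$, as computed by Maclachlan \& Reid), every $Q \leq \Isom^+(M_4)$ has order a power of $2$. On the other hand $|\Gamma_4^\ab| = 45$ is odd. Therefore $\gcd(|Q|, |\Gamma_4^\ab|) = 1$.

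Lemma~\ref{Lem:easy-SES} now yields $H_2(G,\Z) \cong H_2(Q,\Z)$. Unwinding its proof, the transfer kills $E^2_{p,1} = H_p(Q, \Gamma_4^\ab)$ for $p > 0$, so the only surviving term on the diagonal $p+q=2$ is $E^2_{2,0}$. The transgression $\tau \colon H_2(Q,\Z) \to (\Gamma_4^\ab)_Q$ then goes from a $2$-group to a group of odd order, so it vanishes.

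The only step needing any care is confirming the order of $\Isom^+(M_4)$ (and that the orbifold group is indeed the extension by $Q$ via Mostow--Prasad rigidity); both are quoted facts. Beyond these, the argument is immediate and mirrors the first case of the proof of Theorem~\ref{Thm:prime-fibonacci}.
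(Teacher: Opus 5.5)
Your proposal is correct and follows the paper's own argument: $|\Isom^+(M_4)| = 16$, so every $Q$ is a $2$-group, while $|\Gamma_4^{\ab}| = 45$ is odd. Hence Lemma~\ref{Lem:easy-SES} applies (equivalently, the transgression $H_2(Q,\Z) \to (\Gamma_4^{\ab})_Q$ vanishes) and gives $H_2(G,\Z) \cong H_2(Q,\Z)$; your extra check, via $f_2 L_2 = 3$ in \eqref{Eq:abelianization-of-Fibonacci}, that $M_4$ is a rational homology sphere is a harmless addition the paper leaves implicit.
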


Recall that $\Delta_4$ is the cyclic extension of $\Gamma_4$ by the obvious outer automorphism of order $4$, permuting the generators of $\Gamma_4$ cyclically. As a corollary to Theorem~\ref{Thm:Fib-also-works-for-n=4}, we thus recover the following (easy) result due to Bridson \& Reid \cite[Lemma~7.5]{BridsonReidC}.

\begin{corollary}
With $\Delta_4 = \pi_1^\orb(Q_4)$ as above, we have $H_2(\Delta_4,\Z) = 0$.
\end{corollary}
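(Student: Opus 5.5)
The plan is to derive this as a direct special case of Theorem~\ref{Thm:Fib-also-works-for-n=4}, taking $Q = \langle \rho \rangle \leq \Isom^+(M_4)$. Here $\rho$ is the isometry of order $4$ whose action on $\Gamma_4$ is the outer automorphism cyclically permuting the generators, and $\Delta_4 = \pi_1^\orb(Q_4)$ is the resulting extension of $\Gamma_4$ by $C_4$. Theorem~\ref{Thm:Fib-also-works-for-n=4} then gives $H_2(\Delta_4, \Z) \cong H_2(C_4, \Z)$. Since the Schur multiplier of a cyclic group vanishes, we obtain $H_2(\Delta_4,\Z)=0$.

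The first step is to confirm that $\Delta_4$ really is an extension of this kind. We know $\Gamma_4 \leq_4 \Delta_4$ is normal, being the kernel of the map to $\Z/4$ coming from the $4$-fold cyclic branched cover $M_4 \to Q_4$. The deck group $C_4$ acts on $M_4$ by orientation-preserving isometries, since the covering transformations of a branched cover of $S^3$ preserve orientation. So the deck group is a subgroup $Q \leq \Isom^+(M_4)$, and $\pi_1^\orb(M_4/Q) = \Delta_4$ because $M_4/Q = Q_4$.

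For robustness, I would also spell out the mechanism without citing the theorem. By Poincaré duality, $H_2(\Gamma_4,\Z)=0$ because $M_4$ is a rational homology sphere. The group $\gcd(4, |\Gamma_4^\ab|) = \gcd(4,45)=1$, so Lemma~\ref{Lem:easy-SES} applies and yields $0 \to H_2(\Delta_4,\Z) \to H_2(C_4,\Z)=0$.

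The only point needing any care is the identification of $\Delta_4$ with the extension by an orientation-preserving subgroup of $\Isom(M_4)$; I expect this to be immediate from the branched-cover description. Everything else is a one-line citation.
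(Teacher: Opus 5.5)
Your proposal is correct and follows the paper's argument. The paper obtains the corollary by applying Theorem~\ref{Thm:Fib-also-works-for-n=4}, which is itself just Lemma~\ref{Lem:easy-SES} with $\gcd(|Q|,45)=1$, to the deck group $Q\cong C_4\leq \Isom^+(M_4)$, and then uses $H_2(C_4,\Z)=0$. Your check that the deck group of the branched cover acts by orientation-preserving isometries, so that $\Delta_4=\pi_1^\orb(M_4/C_4)$, is a detail the paper leaves implicit.
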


Since Theorem~\ref{Thm:Fib-also-works-for-n=4} gives the second homology group for any lattice $\Gamma$ between $\Gamma_4$ and its extension by $\Isom^+(M_4)$, it is natural to compute the derived subgroup of those $\Gamma$ which have $H_2(\Gamma, \Z)=0$. If $G = [\Gamma, \Gamma]$, then Theorem~\ref{ThmC-BridsonReid} yields infinitely many Grothendieck pairs $P_\lambda \hookrightarrow G \times G$. Unfortunately, for all non-cyclic subgroups of $\Isom^+(M_4) \cong C_2 \times D_4$, we have that $H_2(Q, \Z)$ is a non-trivial $2$-group, and hence the corresponding $\Gamma$ has $H_2(\Gamma, \Z) \neq 0$. On the other hand, for all cyclic subgroups, we tend to have that $[\Gamma, \Gamma]$ is a proper subgroup of $\Gamma_4$, which still yields non-trivial Grothendieck pairs, but not in the direct product of lattices between $\Gamma_4$ and its normalizer in $\PSL_2(\mathbb{C})$.

\bibliographystyle{amsalpha}
\bibliography{ProfiniteRigidityH2}

@article {Agol2013,
    AUTHOR = {Agol, Ian},
     TITLE = {The virtual {H}aken conjecture},
      NOTE = {With an appendix by Agol, Daniel Groves, and Jason Manning},
   JOURNAL = {Doc. Math.},
  FJOURNAL = {Documenta Mathematica},
    VOLUME = {18},
      YEAR = {2013},
     PAGES = {1045--1087},
      ISSN = {1431-0635,1431-0643},
   MRCLASS = {20F67 (57Mxx)},
  MRNUMBER = {3104553},
MRREVIEWER = {Thomas\ Koberda},
       URL = {https://elibm.org/article/10000267},
}

@article {Anick1986,
    AUTHOR = {Anick, David J.},
     TITLE = {On the homology of associative algebras},
   JOURNAL = {Trans. Amer. Math. Soc.},
  FJOURNAL = {Transactions of the American Mathematical Society},
    VOLUME = {296},
      YEAR = {1986},
    NUMBER = {2},
     PAGES = {641--659},
      ISSN = {0002-9947,1088-6850},
   MRCLASS = {16A62 (13D03 55S10)},
  MRNUMBER = {846601},
MRREVIEWER = {V.\ P.\ Snaith},
       DOI = {10.2307/2000383},
       URL = {https://doi.org/10.2307/2000383},
}

@article {Baumslag1974,
    AUTHOR = {Baumslag, Gilbert},
     TITLE = {Residually finite groups with the same finite images},
   JOURNAL = {Compositio Math.},
  FJOURNAL = {Compositio Mathematica},
    VOLUME = {29},
      YEAR = {1974},
     PAGES = {249--252},
      ISSN = {0010-437X,1570-5846},
   MRCLASS = {20E25},
  MRNUMBER = {357615},
MRREVIEWER = {A.\ H.\ Rhemtulla},
}

@article {BridsonReidC,
    AUTHOR = {Bridson, Martin and Reid, Alan},
     TITLE = {Relatively hyperbolic groups, {G}rothendieck pairs, and uncountable profinite ambiguity among fibre products},
      NOTE = {arXiv:2507.15009},
   JOURNAL = {arXiv preprint},
  FJOURNAL = {arXiv preprint},
  YEAR = {2025}
}

@article {BMRS,
    AUTHOR = {Bridson, M. R. and McReynolds, D. B. and Reid, A. W. and
              Spitler, R.},
     TITLE = {Absolute profinite rigidity and hyperbolic geometry},
   JOURNAL = {Ann. of Math. (2)},
  FJOURNAL = {Annals of Mathematics. Second Series},
    VOLUME = {192},
      YEAR = {2020},
    NUMBER = {3},
     PAGES = {679--719},
      ISSN = {0003-486X,1939-8980},
   MRCLASS = {57M50 (11F06 20E18 20H10)},
  MRNUMBER = {4172619},
       DOI = {10.4007/annals.2020.192.3.1},
       URL = {https://doi.org/10.4007/annals.2020.192.3.1},
}

@article {Bridson2004,
    AUTHOR = {Bridson, Martin R. and Grunewald, Fritz J.},
     TITLE = {Grothendieck's problems concerning profinite completions and
              representations of groups},
   JOURNAL = {Ann. of Math. (2)},
  FJOURNAL = {Annals of Mathematics. Second Series},
    VOLUME = {160},
      YEAR = {2004},
    NUMBER = {1},
     PAGES = {359--373},
      ISSN = {0003-486X,1939-8980},
   MRCLASS = {20E18 (20E26)},
  MRNUMBER = {2119723},
MRREVIEWER = {Jan-Christoph\ Schlage-Puchta},
       DOI = {10.4007/annals.2004.160.359},
       URL = {https://doi.org/10.4007/annals.2004.160.359},
}

@book {Brown1982,
    AUTHOR = {Brown, Kenneth S.},
     TITLE = {Cohomology of groups},
    SERIES = {Graduate Texts in Mathematics},
    VOLUME = {87},
 PUBLISHER = {Springer-Verlag, New York-Berlin},
      YEAR = {1982},
     PAGES = {x+306},
      ISBN = {0-387-90688-6},
   MRCLASS = {20-02 (18-01 20F32 20J05 55-01)},
  MRNUMBER = {672956},
MRREVIEWER = {Ross\ Staffeldt},
}

@book {Book1993,
    AUTHOR = {Book, Ronald V. and Otto, Friedrich},
     TITLE = {String-rewriting systems},
    SERIES = {Texts and Monographs in Computer Science},
 PUBLISHER = {Springer-Verlag, New York},
      YEAR = {1993},
     PAGES = {viii+189},
      ISBN = {0-387-97965-4},
   MRCLASS = {68Q42 (03D05 68-02 68R15)},
  MRNUMBER = {1215932},
MRREVIEWER = {Matthias\ Jantzen},
       DOI = {10.1007/978-1-4613-9771-7},
       URL = {https://doi.org/10.1007/978-1-4613-9771-7},
}

@article {Borel1964,
    AUTHOR = {Borel, A. and Serre, J.-P.},
     TITLE = {Th\'{e}or\`emes de finitude en cohomologie galoisienne},
   JOURNAL = {Comment. Math. Helv.},
  FJOURNAL = {Commentarii Mathematici Helvetici},
    VOLUME = {39},
      YEAR = {1964},
     PAGES = {111--164},
      ISSN = {0010-2571,1420-8946},
   MRCLASS = {14.50},
  MRNUMBER = {181643},
MRREVIEWER = {T.\ Ono},
       DOI = {10.1007/BF02566948},
       URL = {https://doi.org/10.1007/BF02566948},
}

@article {Bridson2022,
    AUTHOR = {Bridson, M. R. and Reid, A. W.},
     TITLE = {Profinite rigidity, {K}leinian groups, and the cofinite {H}opf
              property},
   JOURNAL = {Michigan Math. J.},
  FJOURNAL = {Michigan Mathematical Journal},
    VOLUME = {72},
      YEAR = {2022},
     PAGES = {25--49},
      ISSN = {0026-2285,1945-2365},
   MRCLASS = {20H10 (20E18 22E40 30F40 57M50)},
  MRNUMBER = {4460248},
MRREVIEWER = {Alexander\ W.\ Mason},
       DOI = {10.1307/mmj/20217218},
       URL = {https://doi.org/10.1307/mmj/20217218},
}

@article {Button2005,
    AUTHOR = {Button, J. O.},
     TITLE = {Fibred and virtually fibred hyperbolic 3-manifolds in the
              censuses},
   JOURNAL = {Experiment. Math.},
  FJOURNAL = {Experimental Mathematics},
    VOLUME = {14},
      YEAR = {2005},
    NUMBER = {2},
     PAGES = {231--255},
      ISSN = {1058-6458,1944-950X},
   MRCLASS = {57N10 (57M50)},
  MRNUMBER = {2169525},
MRREVIEWER = {Colin\ C.\ Adams},
       URL = {http://projecteuclid.org/euclid.em/1128100134},
}

@book {Chenadec1986,
    AUTHOR = {Le Chenadec, Philippe},
     TITLE = {Canonical forms in finitely presented algebras},
    SERIES = {Research Notes in Theoretical Computer Science},
 PUBLISHER = {Pitman Publishing, Ltd., London; John Wiley \& Sons, Inc., New
              York},
      YEAR = {1986},
     PAGES = {xii+201},
      ISBN = {0-273-08721-5},
   MRCLASS = {68Q50 (03B35 03D03 08B05 20F05 68Q40)},
  MRNUMBER = {840218},
MRREVIEWER = {Friedrich\ Otto},
}

@article {Chinburg2001,
    AUTHOR = {Chinburg, Ted and Friedman, Eduardo and Jones, Kerry N. and
              Reid, Alan W.},
     TITLE = {The arithmetic hyperbolic 3-manifold of smallest volume},
   JOURNAL = {Ann. Scuola Norm. Sup. Pisa Cl. Sci. (4)},
  FJOURNAL = {Annali della Scuola Normale Superiore di Pisa. Classe di
              Scienze. Serie IV},
    VOLUME = {30},
      YEAR = {2001},
    NUMBER = {1},
     PAGES = {1--40},
      ISSN = {0391-173X,2036-2145},
   MRCLASS = {57M50 (11F06)},
  MRNUMBER = {1882023},
MRREVIEWER = {Colin\ C.\ Adams},
       URL = {http://www.numdam.org/item?id=ASNSP_2001_4_30_1_1_0},
}

@article {Conder2021,
    AUTHOR = {Conder, Marston},
     TITLE = {Two new proofs of the fact that triangle groups are
              distinguished by their finite quotients},
   JOURNAL = {New Zealand J. Math.},
  FJOURNAL = {New Zealand Journal of Mathematics},
    VOLUME = {52},
      YEAR = {2021 [2021--2022]},
     PAGES = {827--844},
      ISSN = {1171-6096,1179-4984},
   MRCLASS = {20E26 (20E18 20F36 20F67 20H10 20J06 22E40 57M07)},
  MRNUMBER = {4387996},
MRREVIEWER = {Jack\ O.\ Button},
       DOI = {10.53733/193},
       URL = {https://doi.org/10.53733/193},
}

@article {Conway1965,
    AUTHOR = {Conway, John},
     TITLE = {Advanced problem 5327},
   JOURNAL = {Amer. Math. Monthly},
  FJOURNAL = {American Mathematical Monthly},
    VOLUME = {72},
    ISSUE = {8},
      YEAR = {1965},
     PAGES = {915},
}

@article {Dixon1982,
    AUTHOR = {Dixon, John D. and Formanek, Edward W. and Poland, John C. and
              Ribes, Luis},
     TITLE = {Profinite completions and isomorphic finite quotients},
   JOURNAL = {J. Pure Appl. Algebra},
  FJOURNAL = {Journal of Pure and Applied Algebra},
    VOLUME = {23},
      YEAR = {1982},
    NUMBER = {3},
     PAGES = {227--231},
      ISSN = {0022-4049,1873-1376},
   MRCLASS = {20E18},
  MRNUMBER = {644274},
MRREVIEWER = {S.\ P.\ Demushkin},
       DOI = {10.1016/0022-4049(82)90098-6},
       URL = {https://doi.org/10.1016/0022-4049(82)90098-6},
}

@article {Gabai2009,
    AUTHOR = {Gabai, David and Meyerhoff, Robert and Milley, Peter},
     TITLE = {Minimum volume cusped hyperbolic three-manifolds},
   JOURNAL = {J. Amer. Math. Soc.},
  FJOURNAL = {Journal of the American Mathematical Society},
    VOLUME = {22},
      YEAR = {2009},
    NUMBER = {4},
     PAGES = {1157--1215},
      ISSN = {0894-0347,1088-6834},
   MRCLASS = {57M50 (51M10 51M25)},
  MRNUMBER = {2525782},
MRREVIEWER = {Stephan\ Tillmann},
       DOI = {10.1090/S0894-0347-09-00639-0},
       URL = {https://doi.org/10.1090/S0894-0347-09-00639-0},
}

@article {Groves1993,
    AUTHOR = {Groves, J. R. J. and Smith, G. C.},
     TITLE = {Soluble groups with a finite rewriting system},
   JOURNAL = {Proc. Edinburgh Math. Soc. (2)},
  FJOURNAL = {Proceedings of the Edinburgh Mathematical Society. Series II},
    VOLUME = {36},
      YEAR = {1993},
    NUMBER = {2},
     PAGES = {283--288},
      ISSN = {0013-0915,1464-3839},
   MRCLASS = {20F16 (68Q42)},
  MRNUMBER = {1221049},
MRREVIEWER = {Friedrich\ Otto},
       DOI = {10.1017/S0013091500018381},
       URL = {https://doi.org/10.1017/S0013091500018381},
}

@article {Grothendieck1970,
    AUTHOR = {Grothendieck, Alexander},
     TITLE = {Repr\'{e}sentations lin\'{e}aires et compactification profinie
              des groupes discrets},
   JOURNAL = {Manuscripta Math.},
  FJOURNAL = {Manuscripta Mathematica},
    VOLUME = {2},
      YEAR = {1970},
     PAGES = {375--396},
      ISSN = {0025-2611,1432-1785},
   MRCLASS = {20.80 (14.00)},
  MRNUMBER = {262386},
MRREVIEWER = {F.\ Oort},
       DOI = {10.1007/BF01719593},
       URL = {https://doi.org/10.1007/BF01719593},
}

@incollection {Groves1990,
    AUTHOR = {Groves, J. R. J.},
     TITLE = {Rewriting systems and homology of groups},
 BOOKTITLE = {Groups---{C}anberra 1989},
    SERIES = {Lecture Notes in Math.},
    VOLUME = {1456},
     PAGES = {114--141},
 PUBLISHER = {Springer, Berlin},
      YEAR = {1990},
      ISBN = {3-540-53475-X},
   MRCLASS = {20F10 (20J05)},
  MRNUMBER = {1092227},
MRREVIEWER = {Susan\ Hermiller},
       DOI = {10.1007/BFb0100735},
       URL = {https://doi.org/10.1007/BFb0100735},
}

@article {Guba1996,
    AUTHOR = {Guba, V. S. and Pride, S. J.},
     TITLE = {Low-dimensional (co)homology of free {B}urnside monoids},
   JOURNAL = {J. Pure Appl. Algebra},
  FJOURNAL = {Journal of Pure and Applied Algebra},
    VOLUME = {108},
      YEAR = {1996},
    NUMBER = {1},
     PAGES = {61--79},
      ISSN = {0022-4049,1873-1376},
   MRCLASS = {20M05 (20J10 20M50 68Q42)},
  MRNUMBER = {1382243},
MRREVIEWER = {Grigori\ I.\ \v{Z}itomirski\u{\i}},
       DOI = {10.1016/0022-4049(95)00038-0},
       URL = {https://doi.org/10.1016/0022-4049(95)00038-0},
}

@article {Guba1998,
    AUTHOR = {Guba, V. S. and Pride, S. J.},
     TITLE = {On the left and right cohomological dimension of monoids},
   JOURNAL = {Bull. London Math. Soc.},
  FJOURNAL = {The Bulletin of the London Mathematical Society},
    VOLUME = {30},
      YEAR = {1998},
    NUMBER = {4},
     PAGES = {391--396},
      ISSN = {0024-6093,1469-2120},
   MRCLASS = {20M50 (16E10 20M25)},
  MRNUMBER = {1620825},
MRREVIEWER = {J.\ K.\ Luedeman},
       DOI = {10.1112/S0024609398004676},
       URL = {https://doi.org/10.1112/S0024609398004676},
}

@article {Helling1998,
    AUTHOR = {Helling, H. and Kim, A. C. and Mennicke, J. L.},
     TITLE = {A geometric study of {F}ibonacci groups},
   JOURNAL = {J. Lie Theory},
  FJOURNAL = {Journal of Lie Theory},
    VOLUME = {8},
      YEAR = {1998},
    NUMBER = {1},
     PAGES = {1--23},
      ISSN = {0949-5932},
   MRCLASS = {57M07 (20F34 57M05 57N10)},
  MRNUMBER = {1616794},
MRREVIEWER = {Ronnie\ Lee},
}

@article {Hermiller1994,
    AUTHOR = {Hermiller, Susan M.},
     TITLE = {Rewriting systems for {C}oxeter groups},
   JOURNAL = {J. Pure Appl. Algebra},
  FJOURNAL = {Journal of Pure and Applied Algebra},
    VOLUME = {92},
      YEAR = {1994},
    NUMBER = {2},
     PAGES = {137--148},
      ISSN = {0022-4049,1873-1376},
   MRCLASS = {20F05 (20F55)},
  MRNUMBER = {1261121},
MRREVIEWER = {J.\ R. J. Groves},
       DOI = {10.1016/0022-4049(94)90019-1},
       URL = {https://doi.org/10.1016/0022-4049(94)90019-1},
}

@article {Hermiller1995,
    AUTHOR = {Hermiller, Susan and Meier, John},
     TITLE = {Algorithms and geometry for graph products of groups},
   JOURNAL = {J. Algebra},
  FJOURNAL = {Journal of Algebra},
    VOLUME = {171},
      YEAR = {1995},
    NUMBER = {1},
     PAGES = {230--257},
      ISSN = {0021-8693,1090-266X},
   MRCLASS = {20F32 (20F10)},
  MRNUMBER = {1314099},
MRREVIEWER = {Ian\ M.\ Chiswell},
       DOI = {10.1006/jabr.1995.1010},
       URL = {https://doi.org/10.1006/jabr.1995.1010},
}

@article {Hermiller1999,
    AUTHOR = {Hermiller, Susan M. and Meier, John},
     TITLE = {Artin groups, rewriting systems and three-manifolds},
   JOURNAL = {J. Pure Appl. Algebra},
  FJOURNAL = {Journal of Pure and Applied Algebra},
    VOLUME = {136},
      YEAR = {1999},
    NUMBER = {2},
     PAGES = {141--156},
      ISSN = {0022-4049,1873-1376},
   MRCLASS = {20F36 (57M07 68Q42)},
  MRNUMBER = {1674774},
       DOI = {10.1016/S0022-4049(97)00171-0},
       URL = {https://doi.org/10.1016/S0022-4049(97)00171-0},
}

@article {Hermiller1999b,
    AUTHOR = {Hermiller, Susan and Shapiro, Michael},
     TITLE = {Rewriting systems and geometric three-manifolds},
   JOURNAL = {Geom. Dedicata},
  FJOURNAL = {Geometriae Dedicata},
    VOLUME = {76},
      YEAR = {1999},
    NUMBER = {2},
     PAGES = {211--228},
      ISSN = {0046-5755,1572-9168},
   MRCLASS = {20F10 (20F65 57M07 57N10)},
  MRNUMBER = {1703216},
MRREVIEWER = {Michael\ L.\ Mihalik},
       DOI = {10.1023/A:1005064309732},
       URL = {https://doi.org/10.1023/A:1005064309732},
}

@incollection {Hilden1992,
    AUTHOR = {Hilden, Hugh M. and Lozano, Mar\'{\i}a Teresa and
              Montesinos-Amilibia, Jos\'{e} Mar\'{\i}a},
     TITLE = {The arithmeticity of the figure eight knot orbifolds},
 BOOKTITLE = {Topology '90 ({C}olumbus, {OH}, 1990)},
    SERIES = {Ohio State Univ. Math. Res. Inst. Publ.},
    VOLUME = {1},
     PAGES = {169--183},
 PUBLISHER = {de Gruyter, Berlin},
      YEAR = {1992},
      ISBN = {3-11-012598-6},
   MRCLASS = {57M25 (57M50)},
  MRNUMBER = {1184409},
MRREVIEWER = {Boris\ N.\ Apanasov},
}

@article {Hodgson1994,
    AUTHOR = {Hodgson, Craig D. and Weeks, Jeffrey R.},
     TITLE = {Symmetries, isometries and length spectra of closed hyperbolic
              three-manifolds},
   JOURNAL = {Experiment. Math.},
  FJOURNAL = {Experimental Mathematics},
    VOLUME = {3},
      YEAR = {1994},
    NUMBER = {4},
     PAGES = {261--274},
      ISSN = {1058-6458,1944-950X},
   MRCLASS = {57N10 (57M50)},
  MRNUMBER = {1341719},
       URL = {http://projecteuclid.org/euclid.em/1048515809},
}

@article {Howie2017,
    AUTHOR = {Howie, James and Williams, Gerald},
     TITLE = {Fibonacci type presentations and 3-manifolds},
   JOURNAL = {Topology Appl.},
  FJOURNAL = {Topology and its Applications},
    VOLUME = {215},
      YEAR = {2017},
     PAGES = {24--34},
      ISSN = {0166-8641,1879-3207},
   MRCLASS = {20F05 (57M05 57M50)},
  MRNUMBER = {3576437},
MRREVIEWER = {Daniel\ Matei},
       DOI = {10.1016/j.topol.2016.10.012},
       URL = {https://doi.org/10.1016/j.topol.2016.10.012},
}

@article {Kobayashi1990,
    AUTHOR = {Kobayashi, Yuji},
     TITLE = {Complete rewriting systems and homology of monoid algebras},
   JOURNAL = {J. Pure Appl. Algebra},
  FJOURNAL = {Journal of Pure and Applied Algebra},
    VOLUME = {65},
      YEAR = {1990},
    NUMBER = {3},
     PAGES = {263--275},
      ISSN = {0022-4049,1873-1376},
   MRCLASS = {18B99 (16S99 20M25 68Q42)},
  MRNUMBER = {1072284},
MRREVIEWER = {H.-J.\ Hoehnke},
       DOI = {10.1016/0022-4049(90)90106-R},
       URL = {https://doi.org/10.1016/0022-4049(90)90106-R},
}

@article {Kobayashi2007,
    AUTHOR = {Kobayashi, Yuji},
     TITLE = {The homological finiteness property {${\operatorname{FP}}_1$} and finite
              generation of monoids},
   JOURNAL = {Internat. J. Algebra Comput.},
  FJOURNAL = {International Journal of Algebra and Computation},
    VOLUME = {17},
      YEAR = {2007},
    NUMBER = {3},
     PAGES = {593--605},
      ISSN = {0218-1967,1793-6500},
   MRCLASS = {20M50},
  MRNUMBER = {2333373},
MRREVIEWER = {A.\ R.\ Rajan},
       DOI = {10.1142/S0218196707003743},
       URL = {https://doi.org/10.1142/S0218196707003743},
}

@article {Liu2023,
    AUTHOR = {Liu, Yi},
     TITLE = {Finite-volume hyperbolic 3-manifolds are almost determined by
              their finite quotient groups},
   JOURNAL = {Invent. Math.},
  FJOURNAL = {Inventiones Mathematicae},
    VOLUME = {231},
      YEAR = {2023},
    NUMBER = {2},
     PAGES = {741--804},
      ISSN = {0020-9910,1432-1297},
   MRCLASS = {57K32 (30F40 57M10)},
  MRNUMBER = {4542705},
MRREVIEWER = {Ken'ichi\ Yoshida},
       DOI = {10.1007/s00222-022-01155-4},
       URL = {https://doi.org/10.1007/s00222-022-01155-4},
}

@article {Maclachlan1997,
    AUTHOR = {Maclachlan, C. and Reid, A. W.},
     TITLE = {Generalised {F}ibonacci manifolds},
   JOURNAL = {Transform. Groups},
  FJOURNAL = {Transformation Groups},
    VOLUME = {2},
      YEAR = {1997},
    NUMBER = {2},
     PAGES = {165--182},
      ISSN = {1083-4362,1531-586X},
   MRCLASS = {57M50 (11B39)},
  MRNUMBER = {1451362},
MRREVIEWER = {Mark\ Brittenham},
       DOI = {10.1007/BF01235939},
       URL = {https://doi.org/10.1007/BF01235939},
}

@book {MaclachlanReidBook,
    AUTHOR = {Maclachlan, Colin and Reid, Alan W.},
     TITLE = {The arithmetic of hyperbolic 3-manifolds},
    SERIES = {Graduate Texts in Mathematics},
    VOLUME = {219},
 PUBLISHER = {Springer-Verlag, New York},
      YEAR = {2003},
     PAGES = {xiv+463},
      ISBN = {0-387-98386-4},
   MRCLASS = {57M50 (11R52)},
  MRNUMBER = {1937957},
MRREVIEWER = {Kerry\ N.\ Jones},
       DOI = {10.1007/978-1-4757-6720-9},
       URL = {https://doi.org/10.1007/978-1-4757-6720-9},
}

@article {Matveev1988,
    AUTHOR = {Matveev, S. V. and Fomenko, A. T.},
     TITLE = {Isoenergetic surfaces of {H}amiltonian systems, the
              enumeration of three-dimensional manifolds in order of growth
              of their complexity, and the calculation of the volumes of
              closed hyperbolic manifolds},
   JOURNAL = {Uspekhi Mat. Nauk},
  FJOURNAL = {Akademiya Nauk SSSR i Moskovskoe Matematicheskoe Obshchestvo.
              Uspekhi Matematicheskikh Nauk},
    VOLUME = {43},
      YEAR = {1988},
    NUMBER = {1(259)},
     PAGES = {5--22, 247},
      ISSN = {0042-1316},
   MRCLASS = {58F05 (57M99)},
  MRNUMBER = {937017},
MRREVIEWER = {A.\ Morimoto},
       DOI = {10.1070/RM1988v043n01ABEH001554},
       URL = {https://doi.org/10.1070/RM1988v043n01ABEH001554},
}

@book {MKS,
    AUTHOR = {Magnus, Wilhelm and Karrass, Abraham and Solitar, Donald},
     TITLE = {Combinatorial group theory: {P}resentations of groups in terms
              of generators and relations},
 PUBLISHER = {Interscience Publishers [John Wiley \& Sons], New
              York-London-Sydney},
      YEAR = {1966},
     PAGES = {xii+444},
   MRCLASS = {20.10},
  MRNUMBER = {207802},
MRREVIEWER = {Graham\ Higman},
}

@article {Mednykh1998,
    AUTHOR = {Mednykh, Alexander and Vesnin, Andrei},
     TITLE = {Visualization of the isometry group action on the
              {F}omenko-{M}atveev-{W}eeks manifold},
   JOURNAL = {J. Lie Theory},
  FJOURNAL = {Journal of Lie Theory},
    VOLUME = {8},
      YEAR = {1998},
    NUMBER = {1},
     PAGES = {51--66},
      ISSN = {0949-5932},
   MRCLASS = {57M60 (57M12 57N10)},
  MRNUMBER = {1616790},
}

@book {Morgan2014,
    AUTHOR = {Morgan, John and Tian, Gang},
     TITLE = {The geometrization conjecture},
    SERIES = {Clay Mathematics Monographs},
    VOLUME = {5},
 PUBLISHER = {American Mathematical Society, Providence, RI; Clay
              Mathematics Institute, Cambridge, MA},
      YEAR = {2014},
     PAGES = {x+291},
      ISBN = {978-0-8218-5201-9},
   MRCLASS = {53C44 (53C23)},
  MRNUMBER = {3186136},
MRREVIEWER = {Ryan\ D.\ Budney},
}

@article {Mostow1968,
    AUTHOR = {Mostow, G. D.},
     TITLE = {Quasi-conformal mappings in {$n$}-space and the rigidity of
              hyperbolic space forms},
   JOURNAL = {Inst. Hautes \'{E}tudes Sci. Publ. Math.},
  FJOURNAL = {Institut des Hautes \'{E}tudes Scientifiques. Publications
              Math\'{e}matiques},
    NUMBER = {34},
      YEAR = {1968},
     PAGES = {53--104},
      ISSN = {0073-8301,1618-1913},
   MRCLASS = {30.47},
  MRNUMBER = {236383},
MRREVIEWER = {J.\ R.\ Cannon},
       URL = {http://www.numdam.org/item?id=PMIHES_1968__34__53_0},
}

@article {Newman1942,
    AUTHOR = {Newman, M. H. A.},
     TITLE = {On theories with a combinatorial definition of
              ``equivalence.''},
   JOURNAL = {Ann. of Math. (2)},
  FJOURNAL = {Annals of Mathematics. Second Series},
    VOLUME = {43},
      YEAR = {1942},
     PAGES = {223--243},
      ISSN = {0003-486X},
   MRCLASS = {02.0X},
  MRNUMBER = {7372},
MRREVIEWER = {O.\ Ore},
       DOI = {10.2307/1968867},
       URL = {https://doi.org/10.2307/1968867},
}

@article {Nikolov2007,
    AUTHOR = {Nikolov, Nikolay and Segal, Dan},
     TITLE = {On finitely generated profinite groups. {I}. {S}trong
              completeness and uniform bounds},
   JOURNAL = {Ann. of Math. (2)},
  FJOURNAL = {Annals of Mathematics. Second Series},
    VOLUME = {165},
      YEAR = {2007},
    NUMBER = {1},
     PAGES = {171--238},
      ISSN = {0003-486X,1939-8980},
   MRCLASS = {20E18 (20E32 20F12)},
  MRNUMBER = {2276769},
MRREVIEWER = {Benjamin\ Klopsch},
       DOI = {10.4007/annals.2007.165.171},
       URL = {https://doi.org/10.4007/annals.2007.165.171},
}

@article {Nikolov2021,
    AUTHOR = {Nikolov, Nikolay and Segal, Dan},
     TITLE = {Constructing uncountably many groups with the same profinite
              completion},
   JOURNAL = {New Zealand J. Math.},
  FJOURNAL = {New Zealand Journal of Mathematics},
    VOLUME = {52},
      YEAR = {2021 [2021--2022]},
     PAGES = {765--771},
      ISSN = {1171-6096,1179-4984},
   MRCLASS = {20E18 (20E26 20F16)},
  MRNUMBER = {4387993},
MRREVIEWER = {Egle\ Bettio},
       DOI = {10.53733/89},
       URL = {https://doi.org/10.53733/89},
}

@incollection {Noskov1979,
    AUTHOR = {Noskov, G. A. and Remeslennikov, V. N. and Roman'kov,
              V. A.},
     TITLE = {Infinite groups},
 BOOKTITLE = {Algebra. {T}opology. {G}eometry, {V}ol. 17 ({R}ussian)},
    SERIES = {Itogi Nauki i Tekhniki},
     PAGES = {65--157, 308},
 PUBLISHER = {Akad. Nauk SSSR, Vsesoyuz. Inst. Nauchn. i Tekhn. Inform.,
              Moscow},
      YEAR = {1979},
   MRCLASS = {20-02},
  MRNUMBER = {584569},
MRREVIEWER = {Yu.\ I.\ Merzlyakov},
}

@article {Pickel1971,
    AUTHOR = {Pickel, P. F.},
     TITLE = {Finitely generated nilpotent groups with isomorphic finite
              quotients},
   JOURNAL = {Trans. Amer. Math. Soc.},
  FJOURNAL = {Transactions of the American Mathematical Society},
    VOLUME = {160},
      YEAR = {1971},
     PAGES = {327--341},
      ISSN = {0002-9947,1088-6850},
   MRCLASS = {20F05},
  MRNUMBER = {291287},
MRREVIEWER = {K.\ W.\ Gruenberg},
       DOI = {10.2307/1995809},
       URL = {https://doi.org/10.2307/1995809},
}

@article {Pickel1973,
    AUTHOR = {Pickel, P. F.},
     TITLE = {Nilpotent-by-finite groups with isomorphic finite quotients},
   JOURNAL = {Trans. Amer. Math. Soc.},
  FJOURNAL = {Transactions of the American Mathematical Society},
    VOLUME = {183},
      YEAR = {1973},
     PAGES = {313--325},
      ISSN = {0002-9947,1088-6850},
   MRCLASS = {20E99},
  MRNUMBER = {384940},
MRREVIEWER = {John\ D. P. Meldrum},
       DOI = {10.2307/1996471},
       URL = {https://doi.org/10.2307/1996471},
}

@article {Pickel1974,
    AUTHOR = {Pickel, P. F.},
     TITLE = {Metabelian groups with the same finite quotients},
   JOURNAL = {Bull. Austral. Math. Soc.},
  FJOURNAL = {Bulletin of the Australian Mathematical Society},
    VOLUME = {11},
      YEAR = {1974},
     PAGES = {115--120},
      ISSN = {0004-9727},
   MRCLASS = {20E15},
  MRNUMBER = {364455},
MRREVIEWER = {J.\ Wiegold},
       DOI = {10.1017/S0004972700043689},
       URL = {https://doi.org/10.1017/S0004972700043689},
}

@article {Platonov1986,
    AUTHOR = {Platonov, V. P. and Tavgen' , O. I.},
     TITLE = {On the {G}rothendieck problem of profinite completions of
              groups},
   JOURNAL = {Dokl. Akad. Nauk SSSR},
  FJOURNAL = {Doklady Akademii Nauk SSSR},
    VOLUME = {288},
      YEAR = {1986},
    NUMBER = {5},
     PAGES = {1054--1058},
      ISSN = {0002-3264},
   MRCLASS = {20E18 (11R34 11R70 19B37 22E40)},
  MRNUMBER = {852649},
MRREVIEWER = {L.\ N.\ Vaserstein},
}

@article {Prasad1973,
    AUTHOR = {Prasad, Gopal},
     TITLE = {Strong rigidity of {${\bf Q}$}-rank {$1$} lattices},
   JOURNAL = {Invent. Math.},
  FJOURNAL = {Inventiones Mathematicae},
    VOLUME = {21},
      YEAR = {1973},
     PAGES = {255--286},
      ISSN = {0020-9910,1432-1297},
   MRCLASS = {22E40 (53C35)},
  MRNUMBER = {385005},
MRREVIEWER = {M.\ S.\ Raghunathan},
       DOI = {10.1007/BF01418789},
       URL = {https://doi.org/10.1007/BF01418789},
}

@book {Rotman1979,
    AUTHOR = {Rotman, Joseph J.},
     TITLE = {An introduction to homological algebra},
    SERIES = {Pure and Applied Mathematics},
    VOLUME = {85},
 PUBLISHER = {Academic Press, Inc. [Harcourt Brace Jovanovich, Publishers],
              New York-London},
      YEAR = {1979},
     PAGES = {xi+376},
      ISBN = {0-12-599250-5},
   MRCLASS = {18-01 (16A62 18Gxx 20J05)},
  MRNUMBER = {538169},
}

@article {Squier1987,
    AUTHOR = {Squier, Craig C.},
     TITLE = {Word problems and a homological finiteness condition for
              monoids},
   JOURNAL = {J. Pure Appl. Algebra},
  FJOURNAL = {Journal of Pure and Applied Algebra},
    VOLUME = {49},
      YEAR = {1987},
    NUMBER = {1-2},
     PAGES = {201--217},
      ISSN = {0022-4049,1873-1376},
   MRCLASS = {20J05 (03B25 03D40 20F10 20M05 20M50)},
  MRNUMBER = {920522},
MRREVIEWER = {Kenneth\ S.\ Brown},
       DOI = {10.1016/0022-4049(87)90129-0},
       URL = {https://doi.org/10.1016/0022-4049(87)90129-0},
}

@article {Thurston,
    AUTHOR = {Thurston, William P.},
     TITLE = {The geometry and topology of three-manifolds},
      NOTE = {},
   JOURNAL = {Princeton lecture notes},
  FJOURNAL = {Princeton lecture notes},
  YEAR = {1978}
}

@article {Vesnin1995,
    AUTHOR = {Vesnin, A. Yu. and Mednykh, A. D.},
     TITLE = {Hyperbolic volumes of {F}ibonacci manifolds},
   JOURNAL = {Sibirsk. Mat. Zh.},
  FJOURNAL = {Rossi\u{\i}skaya Akademiya Nauk. Sibirskoe Otdelenie. Institut
              Matematiki im. S. L. Soboleva. Sibirski\u{\i} Matematicheski\u{\i} Zhurnal},
    VOLUME = {36},
      YEAR = {1995},
    NUMBER = {2},
     PAGES = {266--277, i},
      ISSN = {0037-4474},
   MRCLASS = {57N10 (20F05 57M12 57M50)},
  MRNUMBER = {1340395},
       DOI = {10.1007/BF02110146},
       URL = {https://doi.org/10.1007/BF02110146},
}

@article {Vesnin1999,
    AUTHOR = {Vesnin, A. Yu. and Rasskazov, A. A.},
     TITLE = {Isometries of hyperbolic {F}ibonacci manifolds},
   JOURNAL = {Sibirsk. Mat. Zh.},
  FJOURNAL = {Rossi\u{\i}skaya Akademiya Nauk. Sibirskoe Otdelenie. Institut
              Matematiki im. S. L. Soboleva. Sibirski\u{\i} Matematicheski\u{\i} Zhurnal},
    VOLUME = {40},
      YEAR = {1999},
    NUMBER = {1},
     PAGES = {14--29, i},
      ISSN = {0037-4474},
   MRCLASS = {57M50 (30F40)},
  MRNUMBER = {1686986},
MRREVIEWER = {Igor\ Belegradek},
       DOI = {10.1007/BF02674286},
       URL = {https://doi.org/10.1007/BF02674286},
}

@article {BMRSTriangle,
    AUTHOR = {Bridson, Martin R. and McReynolds, D. B. and Reid, Alan W. and
              Spitler, Ryan},
     TITLE = {On the profinite rigidity of triangle groups},
   JOURNAL = {Bull. Lond. Math. Soc.},
  FJOURNAL = {Bulletin of the London Mathematical Society},
    VOLUME = {53},
      YEAR = {2021},
    NUMBER = {6},
     PAGES = {1849--1862},
      ISSN = {0024-6093,1469-2120},
   MRCLASS = {20H10 (11F06)},
  MRNUMBER = {4386043},
MRREVIEWER = {Alexander\ W.\ Mason},
       DOI = {10.1112/blms.12546},
       URL = {https://doi.org/10.1112/blms.12546},
}

@phdthesis{Weeks1985,
  author  = {Weeks, J},
  title   = {Hyperbolic structures on $3$-manifolds},
  school  = {Princeton University},
  note =    {Ph.D thesis, Princeton University},
  year    = {1985}
}

@book {Wise2021,
    AUTHOR = {Wise, Daniel T.},
     TITLE = {The structure of groups with a quasiconvex hierarchy},
    SERIES = {Annals of Mathematics Studies},
    VOLUME = {209},
 PUBLISHER = {Princeton University Press, Princeton, NJ},
      YEAR = {[2021] \copyright 2021},
     PAGES = {x+357},
      ISBN = {[9780691170442]; [9780691170459]; [9780691213507]},
   MRCLASS = {20F65 (20F67)},
  MRNUMBER = {4298722},
MRREVIEWER = {Anthony\ Genevois},
}

@article {Xu2025,
    AUTHOR = {Xu, Xiaoyu},
     TITLE = {Profinite almost rigidity in 3-manifolds},
   JOURNAL = {Adv. Math.},
  FJOURNAL = {Advances in Mathematics},
    VOLUME = {480},
      YEAR = {2025},
    NUMBER = {part B},
     PAGES = {Paper No. 110505, 79},
      ISSN = {0001-8708,1090-2082},
   MRCLASS = {57K30 (20F34 20F65 57M05 57M10 57M50)},
  MRNUMBER = {4953004},
MRREVIEWER = {Anthony\ Genevois},
       DOI = {10.1016/j.aim.2025.110505},
       URL = {https://doi.org/10.1016/j.aim.2025.110505},
}

\section*{Appendix: verifying Algorithm 1}\label{Appendix:computations-for-general-appendix}

\noindent In this appendix, we verify the correctness of the Algorithm in \S\ref{Subsec:algorithm-description}. Specifically, we prove that the filtration we placed on the terms $P_\bullet$ of the Kobayashi resolution $(P_\bullet, \partial_\bullet)$ for the rewriting system $\cR_G$ defined in \S\ref{Subsec:rewriting-for-complete-extensions} is preserved by the chain maps, that the resulting spectral sequence agrees with the Lyndon--Hochschild-Serre spectral sequence in the relevant degrees, and that the transgression maps $d_{2,0}^2$ and $d_{3,0}^2$ are computed correctly by the algorithm. The computations are explicit and elementary, although rather intricate, relying only on the inductive definition of the chain maps of the Kobayashi resolution (defined in \S\ref{Subsec:Kobayashi-back}) and Lemma~\ref{Lem:Kobayashi-c}. We retain all notation from \S\ref{Subsec:algorithm-description}, including the fact that $G$ is an extension of $H$ by $Q$, and that $H_2(H, \Z) = 0$. 

\

\noindent\textbf{Computing $\partial_2$.} We begin with the computations of $\partial_2$. First, for $p=0$, let $(h_1, h_2) \in V^{(2)}$ be such that $h_1, h_2 \in A^\ast$. Then there is a corresponding rule $(h_1 h_2 \longrightarrow h_3) \in \cR_H \subseteq \cR_G$, with $h_3 \in A^\ast$. Since $\cR_G$ rewrites words over $A^\ast$ to words over $A^\ast$, it follows from Lemma~\ref{Lem:Kobayashi-c} that $\partial_2$ agrees with $\partial_2^H \otimes_{\Z H} \Z G$ on $(h_1, h_2)$, i.e.\ that $\partial_2(h_1, h_2) = \partial_2(h_1, h_2) \otimes_{\Z H} \Z G$. In particular, the filtration specified above is preserved by $\partial_2$. Furthermore, notice, that upon applying $- \otimes_{\Z G} \Z$ this becomes precisely the sum of the generators of $H$ in the defining relation $h_1 h_2 = h_3$, i.e.\ taking the quotient of the free abelian group on $\{ (a) \mid a \in A \}$ by the image of $\widetilde{\partial_2}$ on the set of all degree $0$ pairs $(h_1, h_2) \in V^{(2)}$ we obtain precisely $H_1(H, \Z)$. 

Next, for $p=1$, let $(q, h) \in V^{(2)}$ such that $q \in B^\ast$ and $h \in A^\ast$. Then $q \in B$ and $h \in A$, and
\begin{align*}
\partial_2(q, h) &= (q)\circ h - i_1 \partial_1 ( (q) \circ h) = (q) \circ h - i_1 \left( \overline{qh} - h \right) \\ 
&= (q) \circ h - i_1 ( \varphi_q(h)q - q) - (h)  = (q) \circ [h - 1] - \sum_{a \in A} (a) \circ \left( \frac{\partial \varphi_q(h)}{\partial a} - \frac{\partial h}{\partial a}\right). 
\end{align*}

Finally, for $p=2$, let $(q_1, q_2) \in V^{(2)}$ such that $q_1, q_2 \in B^\ast$. Then there is some corresponding rule $(q_1 q_2  \longrightarrow \alpha q_3) \in \cR_G$, where $\alpha \in A^\ast$ and $q_3 \in B^\ast$. We then have 
\begin{align}
\partial_2( q_1, q_2 ) &= (q_1)\circ q_2 - i_1 \partial_1 \left( (q_1) \circ q_2 \right) = 
(q_1) \circ q_2 - i_1 \left( \alpha q_3 - q_2 \right) \nonumber \\ 
&= \sum_{b \in B} (b) \circ \left(\frac{\partial q_1 q_2 }{\partial b} - \frac{\partial q_3 }{\partial b}  \right)  - \sum_{a \in A} (a) \circ \frac{\partial \alpha }{\partial a}. \label{Eq:d2-on-q1q2}
\end{align}
Notice that the first sum in this term is, syntactically, the same as in the computation of $\partial_2^Q (q_1, q_2)$, with the only difference being the interpretation of the coefficients (as elements of $G$ resp.\ elements of $Q$). In particular, when applying $- \otimes_{\Z G} \Z$, this sum becomes exactly $\widetilde{\partial_2^Q}(q_1, q_2)$. The second sum of course becomes $[\alpha]_{\ab}$, the image of $\alpha \in A^\ast$ in $H_1(H, \Z)$. 

Note that all of the above maps are effectively computable from a finite presentation of $H$ (as the map $H \twoheadrightarrow H^\ab$ is then effectively computable) along with a finite system $\cR_Q$ and section $s \colon \cR_Q \to A^\ast$. Summarizing our computations for $\partial_2$, and in the notation of the above, when applying $- \otimes_{\Z G} \Z$ we find that 
\begin{align}
\widetilde{\partial_2}(h_1, h_2) &= \widetilde{\partial_2^H}(h_1, h_2) \label{Eq:d2-0} \\ 
\widetilde{\partial_2}(q, h) &=  [h]_{\ab} - [\varphi_q(h)]_{\ab} =  [h]_{\ab} - q . [h]_{\ab}\label{Eq:d2-1} \\
\widetilde{\partial_2}(q_1, q_2) &= \widetilde{\partial_2^Q}(q_1, q_2) - [\alpha]_{\ab} \label{Eq:d2-2} 
\end{align}

\

\noindent \textbf{Computing $\partial_3$.} We will now compute $\partial_3$, where needed (and where possible). There are four types of basis elements of $P_3$, as in the filtration in Table~\ref{Tab:filtration}. For $p=0$, we note that since $\cR_H$ is not assumed to be finite or even computable, we cannot compute $\partial_3$ on these terms. However, just as in the case of $\partial_2$, we can see immediately from Lemma~\ref{Lem:Kobayashi-c} that $\partial_3 = \partial_3^H \otimes_{\Z H } \Z G$, and in particular that $\widetilde{\partial_3} = \widetilde{\partial_3^H}$. 

For $p=1$, i.e.\ on basis elements of the form $(q, h_1, h_2)$ with $q \in B^\ast$ and $h_1, h_2 \in A^\ast$, we see that $q \in B$ and $h_1 \in A$ and $h_2 \in A^\ast$, and that there is some rule of the form $(h_1 h_2 \longrightarrow h_3) \in \cR_H \subseteq \cR_G$, and of course a rule $(q h_1 \longrightarrow \varphi_q(h_1) q) \in \cR_G$. We compute: 
\begin{align*}
\partial_3(q, h_1, h_2) &= (q, h_1) \circ h_2 - i_2 \partial_2 \left( (q, h_1) \circ h_2 \right) \\ 
&= (q, h_1) \circ h_2 - i_2 \left( \underbrace{(q) \circ [h_1 - 1]h_2}_{:= X_1} - \underbrace{\sum_{a \in A} (a) \circ \left( \frac{\partial \varphi_q(h_1)}{\partial a} - \frac{\partial h_1}{\partial a}\right) h_2}_{:= X_2} \right).
\end{align*}
Notice that since $\varphi_q(h_1), h_1, h_2 \in A^\ast$, we have by Lemma~\ref{Lem:Kobayashi-c} that $i_2(X_2)$ is entirely supported in degree $p=0$, i.e.\ $i_2(X_2) \in \fg_0 P_2$, and we will not require this part for our purposes. We now compute $i_2(X_1)$. Let $h \equiv a_1 a_2 \cdots a_k$, where $k \geq 1$. Then  
\begin{align*}
i_2( (q) \circ h) &= i_2 ((q) \circ [a_1 a_2 \cdots a_k] ) = (q, a_1) \circ [a_2 \cdots a_k] + i_2 \left( i_1 \partial_1 \left( (q) \circ a_1 \right) \cdots [a_2 \cdots a_k] \right) \\ 
&= (q, a_1) \circ [a_2 \cdots a_k] + i_2 \left( i_1 \left( \varphi_q(a_1)q - a_1 \right) \circ [a_2 \cdots a_k] \right).
\end{align*}
Since there is no rule that begins with letters from $A$ and ends with a $q$, we see immediately that 
\[
i_2 \left( i_1 (\varphi_q(a_1)q - a_1) \circ [a_2 \cdots a_k] \right) + \fg_0 P_2 = i_2 \left( (q) \circ [a_2 \cdots a_k] \right) + \fg_0 P_2.
\]
Hence, by induction on $k \geq 0$, we have that 
\[
i_2( (q) \circ h) + \fg_0 P_2 = \sum_{i = 1}^k (q, a_i) \circ [a_{i+1} \cdots a_k]  + \fg_0 P_2 = \sum_{a \in A} (q, a) \frac{\partial h}{\partial a} + \fg_0 P_2.
\]
Thus we can compute $i_2(X_1) + \fg_0 P_2$ as 
\[
i_2(X_1) + \fg_0 P_2 = i_2( (q) \circ [h_3 - h_2] ) = \sum_{a \in A} (q,a) \left( \frac{\partial h_3}{\partial a} - \frac{\partial h_2}{\partial a }\right) + \fg_0 P_2.
\]
Since $h_1 \in A$, we can thus combine this with the above expression for $\partial_3(q, h_1, h_2)$ to obtain 
\[
\partial_3(q, h_1, h_2) + \fg_0 P_2 = \sum_{a \in A} (q, a)\left( \frac{\partial h_3}{\partial a} - \frac{\partial h_1 h_2}{\partial a }\right) + \fg_0 P_2.
\]
Hence $\partial_3$ respects the filtration, i.e.\ there are no elements of degree $2$ in the image of $\partial_3$ applied to elements of degree $1$ or $0$. Note also that we know that, as in the case of $\partial_2$, the expression $\frac{\partial h_3}{\partial a} - \frac{\partial h_1 h_2}{\partial a }$, when applying $- \otimes_{\Z G} \Z$, is the same as the image under $\widetilde{\partial_2^H}$ of the rule $(h_1 h_2 \to h_3)$, and thus taking the quotient of the free abelian group on $\{ (q, a) \mid q \in B, a \in A \}$ by the image of $\widetilde{\partial_2}$ on the set of all degree $0$ triples $(q, h_1, h_2) \in V^{(3)}$, we obtain the group $\bigoplus_{q \in B} H_1(H, \Z)$, where $[(q,a)]$ corresponds to the element that is $[a]_{\ab}$ in index $q$ and $0$ for all other indices.  

For $p=2$, i.e.\ on the basis elements of the form $(q_1, q_2, a)$ with $q_1, q_2 \in B^\ast$ and $a \in A$, we see as before that there is a rule $(q_1 q_2 \to \alpha q_3) \in \cR_G$. We see that 
\begin{align}
\partial_3 (q_1, q_2, a) &= (q_1, q_2) \circ a - i_2 \partial_2 \left( (q_1, q_2) \circ a \right)  \nonumber \\ 
&= (q_1, q_2) \circ a - i_2 \left( \left(\sum_{b \in B} (b) \circ \left(\frac{\partial q_1 q_2 }{\partial b} - \frac{\partial q_3 }{\partial b}  \right)  - \sum_{a \in A} (a) \circ \frac{\partial \alpha }{\partial a} \right) \circ a \right) \label{Eq:3-intermediate}
\end{align}
where for the second equality we have used the computation of $\partial_2$ from above. Notice first that since $\alpha \in A^\ast$, we have that
\[
i_2 \left( \sum_{a \in A} (a) \circ \frac{\partial \alpha }{\partial a} a \right) \in \fg_0 P_2.
\]
Now suppose that $q_1 q_2 \equiv b_1 b_2 \cdots b_k$ and $q_3 \equiv c_1 c_2 \cdots c_\ell$ for $b_i, c_j \in B$. Then 
\begin{equation}\label{Eq:d2-q1q2-expanded}
\sum_{b \in B} (b) \circ \left( \frac{\partial q_1 q_2 }{\partial b} - \frac{\partial q_3 }{\partial b}  \right) = \sum_{i=1}^k (b_i) \circ b_{i+1} \cdots b_k - \sum_{j=1}^\ell (c_j) \circ c_{j+1} \cdots c_\ell.
\end{equation}
For $1 \leq i \leq k$ and $1 \leq j \leq \ell$, write the conjugation action as
\begin{align*}
\varphi_{b[i]}(a) := \varphi_{b_{i+1} b_{i+2} \cdots b_k}(a), \quad \text{and} \quad
\varphi_{c[j]}(a) := \varphi_{c_{j+1} c_{j+2} \cdots c_\ell}(a),
\end{align*}
letting $\varphi_{b[k]}(a) = \varphi_{b[\ell]}(a) = \varphi_{\varepsilon}(a) = a$. These are, of course, effectively computable if $\varphi$ is effectively computable. Furthermore, we have 
\begin{align*}
b_{i+1} \cdots b_k a \xra{\cR_G} \varphi_{b[i]}(a) b_{i+1} \cdots b_k \equiv \overline{b_{i+1} \cdots b_k a}, \quad \text{and} \\
c_{j+1} \cdots c_\ell a \xra{\cR_G} \varphi_{c[j]}(a)c_{j+1} \cdots c_\ell \equiv \overline{c_{j+1} \cdots c_\ell a}.
\end{align*}
We thus see that 
\begin{align*}
i_2 \left( \sum_{b \in B} (b) \circ \left( \frac{\partial q_1 q_2 }{\partial b} - \frac{\partial q_3 }{\partial b}  \right) a \right) &= i_2 \left( \sum_{i=1}^k (b_i) \circ \varphi_{b[i]}(a) b_{i+1} \cdots b_k \right) \\ & - i_2 \left( \sum_{j=1}^\ell (c_j) \circ \varphi_{c[j]}(a) c_{j+1} \cdots c_\ell \right)
\end{align*}
Since $\varphi_{b[i]}(a), \varphi_{c[j]}(a) \in A^\ast$, we can write them as 
\begin{align*}
\varphi_{b[i]}(a) &\equiv \varphi^{(1)}_{b[i]}(a) \varphi^{(2)}_{b[i]}(a) \cdots \varphi^{(s)}_{b[i]}(a), \quad \text{and} \\
\varphi_{c[j]}(a) &\equiv \varphi^{(1)}_{c[j]}(a) \varphi^{(2)}_{c[j]}(a) \cdots \varphi^{(t)}_{c[j]}(a),
\end{align*}
for some $s, t \geq 0$, where all $\varphi^{(\mu)}_{b[i]}(a), \varphi^{(\nu)}_{c[j]}(a) \in A$ for $1 \leq \mu \leq s$ and $1 \leq \nu \leq t$. Note that $(b_i, \varphi^{(\mu)}_{b[i]}(a)) \in V^{(2)}$ for all indices, and analogously for $c$. Furthermore, no left-hand side of a rule which begins with a letter in $A$ will end in a letter from $B$, and for $j$ the word $c_{j} c_{j+1} \cdots c_\ell$ is irreducible mod $\cR_G$ since $\cR_G$ is reduced; and since $b_{i} b_{i+1} \cdots b_k$ is reducible if and only if $i = 1$ since $\cR_G$ is reduced; we thus easily compute that
\begin{align*}
i_2 \left( \sum_{i=1}^k (b_i) \circ \varphi_{b[i]}(a) b_{i+1} \cdots b_k \right) &= \sum_{i=1}^k \sum_{\mu = 1}^s (b_i, \varphi^{(\mu)}_{b[i]}(a) ) \circ b_{i+1} \cdots b_k + (q_1, q_2) \\
i_2 \left( \sum_{j=1}^\ell (c_j) \circ \varphi_{c[j]}(a) c_{j+1} \cdots c_\ell \right) &= \sum_{j=1}^\ell \sum_{\nu = 1}^t (c_j, \varphi^{(\nu)}_{c[j]}(a) ) \circ c_{j+1} \cdots c_\ell.
\end{align*}
Hence, we find that our original sum \eqref{Eq:3-intermediate} simplifies to 
\begin{align}
\partial_3(q_1, q_2, a) + \fg_0 P_2 = (q_1, q_2) \circ [a-1] &- \sum_{i=1}^k \sum_{\mu = 1}^s (b_i, \varphi^{(\mu)}_{b[i]}(a) ) \circ b_{i+1} \cdots b_k\nonumber \\ &+ \sum_{j=1}^\ell \sum_{\nu = 1}^t (c_j, \varphi^{(\nu)}_{c[j]}(a) ) \circ c_{j+1} \cdots c_\ell + \fg_0 P_2 \label{Eq:H1(Q,H1Z)-map}
\end{align}
But as is easily verified, this is just precisely the action of the relator $q_1q_2=q_3$ on the letter $a$, with respect to the usual resolution coming from the presentation complex of $Q$ tensored with $H_1(H, \Z)$, plus a correction term in $\fg_0 P_2$. In particular, taking the quotient of $E^1_{1,1} = \{ (q) \mid q \in B \} \otimes H_1(H, \Z)$ by the image of all such maps, we obtain precisely $H_1(Q, H_1(H, \Z))$. Finally, since the above action is certainly computable, since the basis element $(q_1, q_2, a)$ was arbitrary, and since there are only finitely many such triples if the rewriting system $\cR_Q$ for $Q$ is finite, it follows that $\im \partial_3 + \fg_0 P_2$ is computable when $\cR_Q$ is finite and $\varphi$ is computable. 

Finally, for $p=3$, we will compute $\partial_3$, again modulo $\fg_0 P_2$. Let $(q_1, q_2, q_3) \in V^{(3)}$ with $q_1, q_2, q_3 \in B^\ast$. We have, using our computation of \eqref{Eq:d2-on-q1q2}, that
\begin{align}
\partial_3( q_1, q_2, q_3) &= (q_1, q_2) \circ q_3 - i_2 \partial_2 ( (q_1, q_2) \circ q_3 ) \nonumber \\ 
&= (q_1, q_2) \circ q_3 - i_2 \left( \left( \sum_{b \in B} (b) \circ \left(\frac{\partial q_1 q_2 }{\partial b} - \frac{\partial q_3 }{\partial b}  \right)  - \sum_{a \in A} (a) \circ \frac{\partial \alpha }{\partial a} \right) q_3. \right) \label{Eq:11}
\end{align}
Note first that $ \frac{\partial \alpha }{\partial a}$ is entirely supported on words from $A^\ast$, it follows that for all $a \in A$, rewriting $\frac{\partial \alpha }{\partial a} q_3$ will give $q_3 \varphi_{q_3}\left( \frac{\partial \alpha }{\partial a}\right)$. But no rule of the form $(a, q)$ exists in $\cR_G$ for any $a \in A$ or $q \in B^+$, and hence it follows that 
\[
i_2 \left( \left( \sum_{a \in A} (a) \circ \frac{\partial \alpha }{\partial a} \right) q_3 \right) = i_2 \left( \sum_{a \in A} (a) \circ \varphi_{q_3} \left( \frac{\partial \alpha }{\partial a}\right) \right)  \in \fg_0 P_2,
\]
with the inclusion in $\fg_0 P_2$ following from Lemma~\ref{Lem:Kobayashi-c}, since words over $A^\ast$ are rewritten to words over $A^\ast$ by $\cR_G$. Thus, we focus on the first sum inside the $i_2$ above. As before, we expand as \eqref{Eq:d2-q1q2-expanded}, and we must therefore only compute 
\begin{equation}\label{Eq:i2-intermediate-0}
i_2 \left( \sum_{i=1}^k (b_i) \circ \overline{b_{i+1} \cdots b_k q_3} - \sum_{j=1}^\ell (c_j) \circ \overline{c_{j+1} \cdots c_\ell q_3} \right) 
\end{equation}
Now for every word $w \in B^\ast$, we have that the irreducible descendant modulo $\cR_G$ of $w$ is a word in $A^\ast B^\ast$, and furthermore if $w \xra{\cR_G} h v \in \Irr(\cR_G)$ with $h \in A^\ast$ and $v \in B^\ast$, then $w \xra{\cR_Q} v$. This is easily shown by induction on the number of rules applied; the only key idea is, of course, that an application of a conjugation rule $qa \longrightarrow \varphi_q(a)q$ does not affect the projection of a word to $B^\ast$. Thus, for all $1 \leq i \leq k$ and $1 \leq j \leq \ell$, let $h_i, h_i' \in A^\ast$ and $v_i, v_i' \in B^\ast$ be such that 
\[
\overline{b_{i+1} \cdots b_k q_3} \equiv h_i v_i \quad \text{and} \quad \overline{c_{j+1} \cdots c_\ell q_3} \equiv h'_j v'_j
\]
so that $h_iv_i, h_i' v_i' \in \Irr(\cR_G)$. Note that if $\cR_Q$ is finite, and the section map $s \colon \cR_Q \to A^\ast$ and $\varphi$ is computable, then $[h_i]_\ab$ and $[h_j']_\ab$ can all be effectively computed here, since $H$ is finitely presented. Now our expression \eqref{Eq:i2-intermediate-0} becomes
\begin{equation}\label{Eq:i2-intermediate-1}
i_2 \left( \sum_{i=1}^k (b_i) \circ h_i v_i - \sum_{j=1}^\ell (c_j) \circ h_j' v_j' \right).
\end{equation}
We write
\[
h_i \equiv h_i^{(1)} h_i^{(2)} \cdots h_i^{(s)} \quad \text{and} \quad h_j' \equiv {h_j^{'}}^{(1)} {h_j^{'}}^{(2)} \cdots {h_j^{'}}^{(t)}
\]
for $s, t\geq 0$ with $h_i^{(\mu)}, {h_j^{'}}^{(\nu)} \in A$ for all $1 \leq \mu \leq s$ and $1 \leq \nu \leq t$. Then by induction on $s$ and $t$ it is easy to see that \eqref{Eq:i2-intermediate-1} becomes
\begin{equation}\label{Eq:i2-intermediate-2}
\sum_{i=1}^k \sum_{\mu=1}^s (b_i, h_i^{(\mu)}) \circ v_i - \sum_{j=1}^\ell \sum_{\nu=1}^t (c_j, {h_j^{'}}^{(\nu)}) \circ v_j'  + i_2 \left( \sum_{i=1}^k (b_i) \circ v_i - \sum_{j=1}^\ell (c_j) \circ v_j' \right).
\end{equation}
Now notice that the first two sums in this expression are such that each $[h_i]_\ab = \sum_{\mu} [h_i^{(\mu)}]_{\ab}$ and $[h_j']_\ab = \sum_{\nu} [{h'_j}^{(\nu)}]_{\ab}$ are computable if $\cR_Q$ is finite, $\varphi$ is computable, and $s \colon \cR_Q \to A^\ast$ is computable. 

We rewrite the last term, i.e.\ 
\begin{equation}\label{Eq:i2-intermediate-3}
 i_2 \left( \sum_{i=1}^k (b_i) \circ v_i - \sum_{j=1}^\ell (c_j) \circ v_j' \right).
\end{equation}
We will prove that this is equal to applying $i_2^Q$ to the same terms, and inflating to $\Z G$, up to a correction term in $\fg_0 P_2$. We focus on a single entry $(b_i) \circ v_i$, and extend this argument by $\Z$-linearity. First, note that for any element $(a, b) \in V^{(2)}$ in the support of $i_2 ( (b_i) \circ v_i)$ (resp.\ $i_2( (c_j) \circ v_j')$) we have that $b_i v_i \xra{\cR_G} ab$ by Lemma~\ref{Lem:Kobayashi-c}.If $b_i v_i$ is irreducible, then $i_2 ( (b_i) \circ v_i) = 0$, so suppose that $b_i v_i$ is not irreducible. Let $w_i \in (A \cup B)^\ast$ be the result of applying a single rule of $\cR_G$ to $b_i v_i$ mod $\cR_G$. Since $v_i$ is irreducible, it follows that the rewriting must be left-most, and hence that there exists some (unique) prefix of $b_i v_i$ that is a left-hand side of a rule in $\cR_G$; hence 
\[
b_i v_i \equiv \ell_i V, \quad \text{and} \quad w_i \equiv r_i V,
\]
where $V \in B^\ast$ is some word and $(\ell_i \to r_i) \in \cR_G$. Let $\ell_i \equiv b_i \ell_i'$, where $\ell_i'$ is non-trivial. Since $\ell_i \in B^\ast$ as a subword of $b_i v_i \in B^\ast$, it follows that $r_i \in A^\ast B^\ast$. Write $r_i \equiv r_i' r_i''$ with $r_i' \in A^\ast$ and $r_i'' \in B^\ast$. Then $(\ell_i \to r_i'') \in \cR_Q$, by definition, and $s(\ell_i \to r_i'') \equiv r_i'$. Then 
\begin{align*}
i_2 ( (b_i) \circ v_i ) + \fg_0 P_2 &= (b_i, \ell_i') \circ V + i_2 \left( i_1 \partial_1 ( (b_i) \circ \ell_i') \circ V \right) + \fg_0 P_2 \\
&= (b_i, \ell_i') \circ V + i_2 \left( i_1 ( r_i' r_i''  - \ell_i' )  \circ V \right)  + \fg_0 P_2 \\
&= (b_i, \ell_i') \circ V + i_2 \left( (i_1 (r_i') r_i'') \circ V \right)  + i_2 \left( i_1(r_i'' - \ell_i') ) \circ V \right) + \fg_0 P_2\\  
&= (b_i, \ell_i') \circ V + i_2 \left( i_1(r_i'' - \ell_i') ) \circ V \right) + \fg_0 P_2 \\ 
&=  (b_i, \ell_i') \circ V + i_2^Q \left( i_1(r_i'' - \ell_i') ) \circ V \right) + \fg_0 P_2, \quad \text{(inductive hypothesis)} \\
&= i_2^Q \left( (b_i) \circ v_i \right) + \fg_0 P_2, 
\end{align*}
as desired. In the middle of this reasoning, we used the easy fact that
\[
i_2 \left( (i_1 (r_i') r_i'') \circ V \right) \in \fg_0 P_2.
\]
This is just the inductive hypothesis, since the word $\overline{r_i'' V}$ is a word in $A^\ast B^\ast$, and since this word is irreducible it follows that any element $(a,b) \in V^{(2)}$ in the support of $i_2 \left( i_1 (r_i') \overline{r_i'' V} \right)$ must have its first entry $a \in A$, and hence also its second entry $b \in A^+$, i.e.\ $(a,b) \in \fg_0 P_2$. Also note that the application of the inductive hypothesis is valid since $V$ is smaller than $v_i$ in the well-ordering described above (and the base case $V \equiv 1$ is trivial); we also used the fact that $i_1^Q$ agrees with $i_1$ once we inflate the coefficients from $\Z Q$ to $\Z G$. 

Thus the term \eqref{Eq:i2-intermediate-3} just becomes equal to 
\begin{equation}\label{Eq:i2-intermediate-4}
i_2^Q \left( \sum_{i=1}^k (b_i) \circ v_i - \sum_{j=1}^\ell (c_j) \circ v_j' \right) = i_2^Q ( \partial_2^Q (q_1, q_2) \circ q_3)
\end{equation}
with coefficients inflated to $\Z G$. However, by definition we have
\begin{equation}\label{Eq:d3-Q-computed}
\partial_3^Q(q_1, q_2, q_3) = (q_1, q_2) \circ q_3  - i_2^Q \partial_2^Q ( (q_1, q_2) \circ q_3 ).
\end{equation}
And hence, combining \eqref{Eq:i2-intermediate-2}, \eqref{Eq:i2-intermediate-4}, \eqref{Eq:d3-Q-computed}, and our initial expression \eqref{Eq:11}, we find that 
\[
\partial_3(q_1, q_2, q_3) + \fg_0 P_2 = \underbrace{\partial_3^Q(q_1, q_2, q_3)}_{\in \fg_2 P_2} - \underbrace{\sum_{i=1}^k \sum_{\mu=1}^s (b_i, h_i^{(\mu)}) \circ v_i + \sum_{j=1}^\ell \sum_{\nu=1}^t (c_j, {h_j^{'}}^{(\nu)}) \circ v_j'}_{\in \fg_1 P_2} + \fg_0 P_2,
\]
with coefficients inflated to $\Z G$. The first part $\partial_3^Q(q_1, q_2, q_3)$ with inflated coefficients lives entirely in $\fg_2 P_2$, and the second intermediate entries lie entirely in $\fg_1 P_2$. This completes the computation of $\partial_3$, as far as we shall need it. 

We noted already earlier that from the above computations in degree $p=0, 1$ that $\partial_3 (\fg_p P_3) \subseteq \fg_p P_{2}$ for all $0 \leq p \leq 3$, and hence our filtration is preserved for all $n \geq 0$. In particular, the spectral sequence $E^\bullet_{p,q}$ is well-defined, and in fact we have essentially computed the relevant terms of it for $H_2(G, \Z)$ already. We now summarize our computations of $\partial_3$ above, together with the result of applying $- \otimes_{\Z G} \Z$. Note that $\fg_0 P_2 \otimes_{\Z G} \Z = E^0_{0,2}$. With this in mind, we get:
\begin{align}
\widetilde{\partial_3} (h_1, h_2, h_3) &= \widetilde{\partial_3^H} (h_1, h_2, h_3) \label{Eq:d(hhh)}\\
\widetilde{\partial_3} (q, h_1, h_2) + E_{0,2}^0 &=  \sum_{a \in A} (q, a) \left( \frac{\partial h_3}{\partial a} - \frac{\partial h_1 h_2}{\partial a}\right) + E_{0,2}^0 \label{Eq:d(qhh)}\\
\widetilde{\partial_3} (q_1, q_2, a) + E_{0,2}^0 &= \sum_{i=1}^k \sum_{\mu = 1}^s (b_i, \varphi^{(\mu)}_{b[i]}(a) ) + \sum_{j=1}^\ell \sum_{\nu = 1}^t (c_j, \varphi^{(\nu)}_{c[j]}(a) ) + E_{0,2}^0 \label{Eq:d(qqh)} \\
\widetilde{\partial_3} (q_1, q_2, q_3) + E_{0,2}^0 &= \widetilde{\partial}_3^Q(q_1, q_2, q_3) - \sum_{i=1}^k \sum_{\mu=1}^s (b_i, h_i^{(\mu)}) + \sum_{j=1}^\ell \sum_{\nu=1}^t (c_j, {h_j^{'}}^{(\nu)}) + E_{0,2}^0 \nonumber \\ 
&= \underbrace{\widetilde{\partial}_3^Q(q_1, q_2, q_3)}_{\in E_{2,0}^0} - \underbrace{\sum_{i=1}^k [b_i, h_i]_\ab + \sum_{j=1}^\ell [c_j, h_j^{'}]_\ab}_{\in E_{1,1}^0} + E_{0,2}^0 \label{Eq:d(qqq)} 
\end{align}
We now use this to compute the relevant entries of our spectral sequence $E^\bullet_{p,q}$. 

\

\noindent\textbf{Computing the pages of the spectral sequence.} We begin with the bottom left corner. From our computations \eqref{Eq:d2-0}--\eqref{Eq:d2-2} of $\partial_2$ we have that
\begin{align*}
E^1_{0,1} &= E^0_{0,1} / \im (d_{0,2}^0) = E^0_{0,1} / \widetilde{\partial_2}(E^0_{0,2}) = H_1(H, \Z), \tag{by \eqref{Eq:d2-0}} \\ 
E^1_{1,0} &= E^0_{1,0} / \im (d_{1,1}^0) = E^0_{1,0} / \frac{\widetilde{\partial_2}(E^0_{1,1})}{E^0_{0,1}} = E^0_{1,0} \tag{by \eqref{Eq:d2-1}}
\end{align*}
Furthermore, passing to the next page i.e.\ considering the induced horizontal maps $d_{p,q}^1 \colon E^1_{p,q} \to E^1_{p-1,q}$, we find (since $\widetilde{\partial_1} = 0$) that 
\begin{align*}
E^2_{0,1} &= E^1_{0,1} / \im (d_{1,1}^1) = E^1_{0,1} / \widetilde{\partial_2}(E^1_{1,1}) = E^1_{0,1} / (q.[h]_\ab - [h]_\ab \mid q \in B, h \in A) \\
&= H_0(Q, H_1(H, \Z)), \\
E^2_{1,0} &= E^1_{1,0} / \im (d_{2,0}^1) = E^1_{1,0} / \frac{\widetilde{\partial_2}(E^1_{2,0})}{E^1_{0,1}} = \{ (b) \mid b \in Q\} / \im(\widetilde{\partial_2^Q}) = H_1(Q, \Z),
\end{align*}
where for the penultimate equality we used \eqref{Eq:d2-2}, and for the last equality we used the fact that $\cR_Q$ is a complete rewriting system defining $Q$. Note that our spectral sequence agrees with the Lyndon--Hochschild--Serre spectral sequence for these entries. 

Next, we compute the diagonal $p+q=2$. First, we have (just as for $E^1_{0,1}$ above) from \eqref{Eq:d(hhh)} and \eqref{Eq:d2-0} that 
\begin{align*}
E^1_{0,2} &\cong H_2(H, \Z).
\end{align*}
In all our applications, this group will be a black box, and furthermore we will need to assume it is $H_2(H, \Z) = 0$ in order to find $H_2(G, \Z)$. Indeed, even if we knew $H_2(H, \Z)$ abstractly, we would not know the transgression maps
\[
d^2_{2,1} \colon H_2(Q, H_1(H, \Z)) \to H_2(H, \Z) \quad \text{and} \quad d^3_{3,0} \colon H_3(Q, \Z) \to H_2(H, \Z)
\]
which are necessary to compute the quotient $E^4_{0,2} = E^\infty_{0,2}$, as this information vanishes into the black box $\fg_0 P_2$-terms in the chain map computations above. This explains why the assumption $H_2(H, \Z)=0$ will be crucial.

Next, for the term $E^2_{1,1}$, we note that $E_{1,1}^0$ is free abelian on $\{ (q, a) \mid q \in B, a \in A \}$. The map $d^0_{1,2}$ is induced by $\widetilde{\partial}_3$ on degree-$1$ triples $(q, h_1, h_2)$, and by \eqref{Eq:d(qhh)} its image in $E_{1,1}^0$ is $\sum_{a \in A}(q,a) \left( \frac{\partial h_3}{\partial a} - \frac{\partial h_1 h_2}{\partial a}\right)$, which records precisely the relation $[h_1]_\ab + [h_2]_\ab = [h_3]_\ab$ in the $(q)$-component upon applying $- \otimes_{\Z G} \Z$. Hence 
\[
E^1_{1,1} = E^0_{1,1} / \im(d_{1,2}^0) \cong \Z[B] \otimes_{\Z} H_1(H, \Z)
\]
where $(q,a)$ is identified with $q \otimes [a]_\ab$. By \eqref{Eq:d2-1}, the induced map $d_{1,1}^1 \colon E^1_{1,1} \to E_{0,1}^1$ sends $(q) \otimes [h]_\ab$ to $q . [h]_\ab - [h]_\ab$, which is the action of $Q$ on $H_1(H, \Z)$. By \eqref{Eq:d(qqh)}, the map $d_{2,1}^1 \colon E_{2,1}^1 \to E_{1,1}^1$ records the action of the relators of $Q$ on $H_1(H, \Z)$, and hence it is not difficult to verify that taking homology we obtain $E^2_{1,1} \cong H_1(Q, H_1(H, \Z))$, as was to be shown. 

Finally, from \eqref{Eq:d(qqq)} and \eqref{Eq:d2-2} we find that the chain complex $(E^1_{\bullet,0}, d^1_{\bullet,0})$ in degrees $1$ and $2$ agrees with the resolution $(P_\bullet^Q\otimes_{\Z Q} \Z, \widetilde{\partial}^Q_\bullet)$, and hence in particular that 
\begin{align*}
E^2_{2,0} = H_2(Q,\Z).
\end{align*}
Furthermore, we can also compute the transgression map. Namely, if we have an element $z = \sum_i a_i (q_1^{(i)}, q_2^{(i)})$ where each $(q_1^{(i)}, q_2^{(i)}) \in V^{(2)}$ and $a_i \in \Z$, such that $\widetilde{\partial_2}^Q(z) = 0$, then $\widetilde{\partial_2}(z)$ lands entirely in $E^1_{0,1}$, and projecting this down into $E^2_{0,1}$ yields precisely the map 
\[
d_{2,0}^2 \colon H_2(Q, \Z) \to H_0(Q, H_1(H, \Z)).
\]
Explicitly, we just map the sum $z = \sum_i a_i (q_1^{(i)}, q_2^{(i)})$ where $r_i = (q_1^{(i)}q_2^{(i)} \to s_i q_3^{(i)})$ to the (equivalence class modulo the action of $Q$ of the) element $\sum_i a_i [s_i]_{\ab}$. This is effectively computable from a presentation of $H$ and assuming $s$ and $\cR_Q$ are computable.

Finally, we have, by considering \eqref{Eq:d(qqq)}, a basis for the group $H_3(Q, \Z)$ can be computed (indeed, by computing $\widetilde{\partial}_4^Q$, which can be done since $\cR_Q$ is finite, it can also easily be shown that we can compute a presentation for $H_3(Q, \Z)$ with respect to this basis; however, this is not necessary to compute $H_2(G, \Z)$). For each such basis element, we can compute its image under $\partial_{3}$ modulo terms in $\fg_0 P_2$, and this image is precisely the transgression map
\[
d_{3,0}^2 \colon E_{3,0}^2 \to E_{1,1}^2.
\]
Thus we have computed the second, and final, transgression in our spectral sequence, and we are done with our computations.

Summarizing, if $\cR_Q$ is a finite complete rewriting system, the section map $s \colon \cR_Q \to A^\ast$ and the conjugation action $\varphi \colon B \times A \to A^\ast$ is explicitly computable, then we can explicitly compute all of the following terms of our spectral sequence
\[
E^2_{1,0}, E^2_{0,1}, E^2_{1,1}, E^2_{2,0}
\]
along with the two transgression maps
\[
d_{2,0}^2 \colon E^2_{2,0} \to E^2_{0,1}, \quad \text{and} \quad d_{3,0}^2 \colon E^2_{3,0} \to E^2_{1,1}
\]
where $d_{2,0}^2$ is a computable map of finitely generated abelian groups, and where the image of $d_{3,0}^2$ is computable.  The above information thus lets us compute 
\begin{align*} 
E^\infty_{1,1} &= E^3_{1,1} = E^2_{1,1} / \im(d_{3,0}^2), \quad \text{and} \\
E^\infty_{2,0} &= E^3_{2,0} = \ker(d_{2,0}^2) \leq H_2(Q, \Z). 
\end{align*}
In particular, if we know that $H_2(H, \Z) = 0$, then $E^\infty_{0,2} = 0$. This being the final piece of the puzzle of our spectral sequence on the diagonal $p+q=2$, and since $E^\infty_{p,q}$ converges to $H_{p+q}(G, \Z)$, we thus only need to solve the extension problem, having obtained an exact sequence 
\begin{equation}\label{Eq:exact-sequence-for-H2-final}
0 \to E^\infty_{1,1} \to H_2(G, \Z) \to E^\infty_{2,0} \to 0
\end{equation}
for $H_2(G, \Z)$. In particular, we can already decide the order of $H_2(G, \Z)$, and thus whether or not it is trivial. Furthermore, had we taken coefficients instead in a finite field, then \eqref{Eq:exact-sequence-for-H2-final} would split, so we can already compute $H_2(G, \mathbb{F}_q)$ for any prime power $q = p^k$. Solving the extension problem in general is not difficult from the data above, but it is notationally cumbersome; we omit most details. We summarize it instead as follows: to determine which class in $\Ext^1_\Z(E^\infty_{2,0}, E^\infty_{1,1})$ the extension \eqref{Eq:exact-sequence-for-H2-final} corresponds to, we must simply consider the image of the $\widetilde{\partial}_3$-map on the basis elements in filtration degree $3$ in $P_3 \otimes_{\Z G} \Z$. Each such image restricted to filtration degree $2$ will be a defining relation of $H_2(Q, \Z)$, and the corresponding term in filtration degree $1$ will project, upon taking homology until the $E^\infty_{2,0}$-page, to being precisely the section map from $E^\infty_{2,0}$ to $H_2(G, \Z)$, i.e.\ the required section map required to assemble $H_2(G, \Z)$ from the data in \eqref{Eq:exact-sequence-for-H2-final}. We leave the details to the interested reader, since we will primarily be interested in conditions for $H_2(G, \Z)$ being trivial or not, rather than its precise isomorphism type. We have, in any case, thus solved the extension problem with data we have already computed, thus completing the last step of the algorithm.

\end{document}